\newcolumntype{P}[1]{>{\centering\arraybackslash}m{#1}}
\def\?[#1]{\textbf{[#1]}\marginpar{\Large{\textbf{??}}}}
\def\smallsection#1{\smallskip\noindent\textbf{#1}.}
\let\epsilon=\varepsilon 
\newtheorem{theo}{Theorem}
\newtheorem*{theo*}{Theorem}
\newtheorem{prop}{Proposition}[section]
\newtheorem*{prop*}{Proposition}	
\newtheorem{defi}[prop]{Definition}
\newtheorem*{defi*}{Definition}
\newtheorem{assumption}{Assumption}
\newtheorem{lemm}[prop]{Lemma}
\newtheorem*{lemm*}{Lemma}
\numberwithin{equation}{section}
\newtheorem{rmk}{Remark}
\newenvironment{manualcorr}[1]{%
  \manualcorrreminner
}{\endmanualcorrreminner}
\DeclareMathOperator{\tr}{tr}
\newcommand\reallywidehat[1]{\arraycolsep=0pt\relax%
\begin{array}{c}
\stretchto{
  \scaleto{
    \scalerel*[\widthof{\ensuremath{#1}}]{\kern-.5pt\bigwedge\kern-.5pt}
    {\rule[-\textheight/2]{1ex}{\textheight}} 
  }{\textheight} %
}{0.5ex}\\           
#1\\                 
\rule{-1ex}{0ex}
\end{array}
}
\author{Tristan Humbert}
\email{tristan.humbert@ens.psl.eu}
\address{Ecole Normale Supérieure Ulm, Paris France 75005.}
\begin{document}
\begin{abstract}
We combine methods from microlocal analysis and dimension theory to study resonances with largest real part for an Anosov flow with smooth real valued potential. We show that the resonant states are closely related to special systems of measures supported on the stable manifolds introduced by Climenhaga in \cite{Cli}. As a result, we relate the presence of the resonances on the critical axis to mixing properties of the flow with respect to certain equilibrium measures and show that these equilibrium measures can be reconstructed from the spectral theory of the Anosov flow.
\end{abstract}
\title{First Ruelle resonance for an Anosov flow with smooth potential} 
\maketitle
\section{Introduction}
Let $(\mathcal M,g)$ be a smooth closed connected Riemannian manifold of dimension $n\geq 3$. We consider a smooth flow $\varphi_t$ on $\mathcal M$ and its generating vector field 
$$X(x):=\frac{d}{dt}\varphi_t(x)|_{t=0},\,\,\, x\in \mathcal M.$$
\begin{assumption}
\label{assumption}
We suppose that the flow is \emph{Anosov}, topologically transitive, that the stable and unstable bundles are orientable and of dimension $d_s$ and $d_u$ respectively. Furthermore, we consider a smooth, real valued potential $V$.
\end{assumption}
\subsection{Leading resonant state}
We study the operator $\mathbf P:= -X+V$ acting on specially designed \emph{anisotropic Sobolev spaces}. The set of eigenvalues of $\mathbf P$ on these spaces are called the \emph{Ruelle resonances}. Their set, which we will denote by $\mathrm{Res}$ is intrinsic to the Anosov flow and contains valuable dynamical meaning. Its understanding is essential to estimate the speed of decay of correlations, see for instance \cite{Liv,TsuZh}. More concretely, Ruelle resonances are defined by (see for instance \cite[Lemma 5.1]{DFG})
\begin{equation}
\label{eq:eqdiff}
\lambda \in \mathrm{Res} \iff \exists u\in \mathcal D'(\mathcal M)\setminus\{0\}, \ \mathrm{WF}(u)\subset E_u^*,\ \ (\mathbf P-\lambda)u=0. 
\end{equation}
Here, $\mathrm{WF}(u)$ denotes the wavefront set of the distribution $u$ and $E_u^*$ is the dual counterpart of the unstable bundle (see \eqref{eq:dual} for a precise definition). 

It is actually useful to study the operator $\mathbf P$ not only on functions but more generally on the space of $k$-forms in the kernel of the contraction by the flow: 
$$\begin{cases}\mathbf{P}: \mathscr E_0^k:=\{ u\in C^{\infty}(\mathcal M; \Lambda^k T^*\mathcal M) \mid \iota_{X}u=0\} \to \mathscr E_0^k
\\ \mathbf{P}\omega:=(-\mathcal L_X+V)\omega=-\frac{d}{dt}\varphi_t^*\omega|_{t=0}+ V\omega.
\end{cases}$$
We can also define resonances for $k$-forms by the equivalence \eqref{eq:eqdiff}\footnote{In this case, $u$ should be a $k$-current, more precisely in the dual of $\mathscr E_0^k$.}  and we denote their set by $\mathrm{Res}_k$. 
The decay rate of correlations is dictated by the resonances with large real part and hence of special interest is the study of resonances on the \emph{critical axis}:
\begin{equation}
\label{eq:criticalaxis}
\mathcal C_k:=\{\lambda \in \mathrm{Res}_k\mid \mathrm{Re}(\lambda)=\sup_{\mu \in \mathrm{Res}_k}\mathrm{Re}(\mu)\}.
\end{equation}
Starting from the relation
$$\forall \omega \in \mathscr E_0^k, \forall \lambda \in \mathbb C, \, \mathrm{Re}(\lambda)\gg 1, \ (\mathbf{P}_{|\mathscr E_0^k}-\lambda)^{-1}\omega=\int_0^{\infty}e^{t(\mathbf{P}-\lambda)}\omega dt, $$
we see that the position of $\mathcal C_k$ should be linked to the exponential growth of the norm of the propagator $e^{t\mathbf{P}}$ on relevant functional spaces (the so-called anisotropic Sobolev spaces that will be introduced in Subsection \ref{Anisotrop}). A form in $\mathscr E_0^k$ is a linear combination of $k$-wegdes of elements of $E_u^*$ and $E_s^*$ (see \eqref{eq:Decomposition} for the exact definition of these bundles). But an element in $E_s^*$ is contracted exponentially fast while an element of $E_u^*$ is expanded exponentially fast by the Anosov property. This means that in order to maximize the exponential growth of the norm, one should study the resolvent on $d_s$-forms.

Moreover, the exact location of $\mathcal C_0$ (resp. $\mathcal C_{d_s}$) is given by the $P(V+J^u)$ (resp. $P(V)$) where $P$ denotes the topological pressure and $J^u:=-\frac{d}{dt}\mathrm{det}(d\varphi_t(x)_{|E_u(x)})|_{t=0}$ is the unstable Jacobian. The corresponding eigenvectors (referred to as \emph{resonant states}) also bear dynamical significance. More precisely, the resonant states at the first resonance $P(V+J^u)$ (resp. $P(V)$) are linked to the system of \emph{leaf measures}. $m^s_{V+J^u}$ (resp. $m^s_V$). Leaf measures are systems of reference measures on stable and unstable leaves which are used to obtain the equilibrium state via a product construction. Their introduction goes back to Sinai \cite{Sin} for maps and Margulis \cite{Marg} for flows (when $V=0$). The measure of maximal entropy was obtained using leaf measures by Hamenstädt \cite{Ham} for geodesic flows in negative curvature and by Hasselblatt \cite{Has} for Anosov flows, see also \cite{Ham2} for an extension to non-zero potentials. Recently, Climenhaga, Pesin and Zelerowicz \cite{CPZ19,CPZ20,Cli} gave a new construction of leaf measures using dimension theory. Their construction extends to certain classes of partially-hyperbolic flows, see also the related works of Carrasco and Rodriguez-Hertz \cite{CarHer,CarHer2}.
\begin{theo}
\label{mainTheo}
Under Assumption \ref{assumption}, the critical axes for the action on $0$-forms and $d_s$-forms are given by:
$$
\mathcal C_0=\{\lambda \mid \mathrm{Re}(\lambda)=P(V+J^u)\},\ \mathcal C_{d_s}=\{\lambda \mid \mathrm{Re}(\lambda)=P(V)\}.$$
Moreover, 
$ P(V+J^u)$ $($ resp. $P(V))$ is a resonance called, the first resoance for the action on $0$-forms $($ resp. $d_s$-forms).

There is $\delta>0$ such that for any $k\neq d_s$, we have $\mathcal C_k\subset \{\lambda \mid \mathrm{Re}(\lambda)\leq P(V)-\delta\}$, i.e all other critical axis are to the left of $\mathcal C_{d_s}$, see Figure \ref{fig:fig}.

Moreover, all $\lambda \in \mathcal C_0$ $($resp. $\lambda \in \mathcal C_{d_s})$ have no Jordan block and the \emph{first resonance} $P(V+J^u)$ $($resp. $P(V))$ is simple:
\begin{equation}
\label{eq:firstres1}
\{u\in \mathcal D'(\mathcal M)\mid (\mathbf P -P(V+J^u))u=0, \ \mathrm{WF}(u)\subset E_u^*\}=\mathrm{Span}(\eta),
\end{equation}
where $\eta$ is a measure constructed in Theorem \ref{Construction} from the system of leaf measures $m^s_{V+J^u}$.
\begin{equation}
\label{eq:firstres2}
\{u\in \mathcal D'(\mathcal M; \Lambda^{d_s}(E_u^*\oplus E_s^*))\mid (\mathbf P -P(V))u=0, \ \mathrm{WF}(u)\subset E_u^*\}=\mathrm{Span}(m_V^s).
\end{equation}
\end{theo}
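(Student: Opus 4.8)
The plan is to establish the theorem in two logically independent stages. In the first stage I locate the critical axes by matching a lower with an upper bound. The lower bound is precisely the content of Theorem~\ref{Construction}, which produces the measure $\eta$ (built from $m^s_{V+J^u}$) and the current $m^s_V$ as nonzero distributions with wavefront set in $E_u^*$ annihilated by $\mathbf P-P(V+J^u)$ and $\mathbf P-P(V)$ respectively; by~\eqref{eq:eqdiff} these are genuine resonances, so $\sup_{\mu\in\mathrm{Res}_0}\mathrm{Re}(\mu)\ge P(V+J^u)$ and $\sup_{\mu\in\mathrm{Res}_{d_s}}\mathrm{Re}(\mu)\ge P(V)$. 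The matching upper bound comes from controlling the norm of the propagator $e^{t\mathbf P}$ on the anisotropic Sobolev spaces of Subsection~\ref{Anisotrop}. In the second stage I prove the rank-one and semisimplicity statements, the engine being a Perron--Frobenius-type argument: $\eta$ and $m^s_V$ are genuine positive measures, and a ``leafwise modulus'' argument will force every resonant state on the critical axis to be proportional to one of them.

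For the upper bound I would split the space of $k$-forms as $\mathscr E_0^k=\bigoplus_j C^\infty(\mathcal M;\Lambda^j E_u^*\wedge\Lambda^{k-j}E_s^*)$ and bound the operator norm of $e^{t\mathbf P}$ on the anisotropic space restricted to each block. Transporting such a form by $\varphi_{-t}$ multiplies it by the Birkhoff weight $\exp\int_0^t V\circ\varphi_{s-t}\,ds$, by the Jacobian of $d\varphi_{-t}$ acting on its $j$ unstable and $k-j$ stable legs, and — because the anisotropic norm measures a density transverse to $E_u^*$ — by a further unstable-Jacobian factor; covering $\mathcal M$ by Bowen balls and summing then yields $\|e^{t\mathbf P}\|\le C_\epsilon\, e^{t(\max_j P(\Phi_{k,j})+\epsilon)}$ for an explicit geometric potential $\Phi_{k,j}$, with the two distinguished values $\Phi_{d_s,0}=V$ and $\Phi_{0,0}=V+J^u$. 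Feeding this into the meromorphic continuation of $(\mathbf P-\lambda)^{-1}$ already available on these spaces confines $\mathrm{Res}_k$ to $\{\mathrm{Re}(\lambda)\le\max_j P(\Phi_{k,j})\}$. Since every invariant measure gives $\int J^u\,d\mu<0$, and more generally $\Phi_{k,j}-V$ has strictly negative integral against every invariant measure whenever $(k,j)\ne(d_s,0)$, the variational principle gives $P(\Phi_{k,j})<P(V)$; finiteness of the set of pairs produces a uniform $\delta>0$. Combined with the lower bound this gives $\mathcal C_0=\{\mathrm{Re}(\lambda)=P(V+J^u)\}$ and $\mathcal C_{d_s}=\{\mathrm{Re}(\lambda)=P(V)\}$, identifies $P(V+J^u)$ and $P(V)$ as the first resonances, and yields the separation $\mathcal C_k\subset\{\mathrm{Re}(\lambda)\le P(V)-\delta\}$ for $k\ne d_s$.

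For the rank-one statements I would first record the local structure of a resonant state $u$ at $\lambda$ with $\mathrm{WF}(u)\subset E_u^*$: in a flow box $u$ is smooth along the unstable and flow directions and defines, for each fixed transverse parameter, a distribution on the local strong stable leaf, and reading $(\mathbf P-\lambda)u=0$ through $\varphi_t$ shows these leafwise distributions transform under stable holonomy and the flow by the weight $\exp\int_0^t(V-\lambda)$ (together with the stable Jacobian in the $d_s$-form case); that is, they form a $(V,\lambda)$-conformal system of leaf measures in the sense underlying~\cite{Cli,CPZ19,CPZ20}. When $\mathrm{Re}(\lambda)=P(V)$ (resp.\ $P(V+J^u)$) the leafwise total variations form a \emph{nonnegative} conformal system for the real weight, hence by uniqueness of the Climenhaga--Pesin--Zelerowicz leaf measures — equivalently uniqueness and ergodicity of the equilibrium state of $V$ (resp.\ of $V+J^u$) — it is a scalar multiple of the full-support system $m^s_V$ (resp.\ of the one underlying $\eta$). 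Full support then forces any two resonant states at the first resonance to have proportional leafwise moduli, so subtracting a suitable multiple of one from the other kills the modulus and hence the state, giving~\eqref{eq:firstres1}--\eqref{eq:firstres2}. For $\lambda\in\mathcal C_0\cup\mathcal C_{d_s}$ with $\mathrm{Im}(\lambda)\ne0$ the same modulus comparison shows a resonant state is a unimodular section times the positive leaf measure, and running the identical argument for the time-reversed flow produces a co-resonant state $u_\lambda^*$ at $\lambda$ with the analogous (unstable-leaf) structure; a Jordan block $(\mathbf P-\lambda)v=u$ with $u\ne0$ resonant would then give $\langle u,u_\lambda^*\rangle=\langle(\mathbf P-\lambda)v,u_\lambda^*\rangle=\langle v,(\mathbf P^{*}-\bar\lambda)u_\lambda^*\rangle=0$, contradicting the non-degeneracy of the pairing between the two positive leaf-measure systems; the same pairing simultaneously settles $\mathrm{Im}(\lambda)=0$ and yields the absence of Jordan blocks.

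The main obstacle is the identification, carried out carefully enough to be usable, of resonant states on the critical axis with conformal systems of leaf measures: one must use the constraint $\mathrm{WF}(u)\subset E_u^*$ to disintegrate $u$ leafwise, check that the resulting leafwise objects are genuine (complex) measures of locally finite mass so that taking their modulus is legitimate, verify that this modulus again satisfies the conformality relation, and then invoke the uniqueness theory of~\cite{Cli} to conclude. The companion difficulty — establishing non-degeneracy of the pairing $\langle\,\cdot\,,u_\lambda^*\rangle$ along the whole critical line, which is what upgrades ``first resonance simple'' to ``every critical resonance semisimple'' — is again resolved through the structure of these systems, using that the resonant and co-resonant leaf measures are supported respectively along the stable and unstable foliations, are of full support, and pair transversally to a strictly positive quantity.
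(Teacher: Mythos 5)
Your overall architecture (lower bound from Theorem~\ref{Construction}, simplicity via leafwise disintegration of critical resonant states, conformality, and the uniqueness theorem of Climenhaga) matches the paper's, and that part of the plan is sound. The genuine gaps are in your upper bound and in the way you exclude Jordan blocks. First, the upper bound: you propose to bound $\|e^{t\mathbf P}\|$ on ``the anisotropic space restricted to each block'' $\Lambda^jE_u^*\wedge\Lambda^{k-j}E_s^*$ by a Bowen-ball covering argument. But this splitting is only H\"older continuous, so the blocks are not smooth subbundles: the Faure--Sj\"ostrand anisotropic Sobolev spaces do not restrict to them, they are not preserved by the pseudodifferential calculus, and the sharp pressure-type bound $\|e^{t\mathbf P}\|\le C_\epsilon e^{t(P+\epsilon)}$ on these microlocally defined spaces is not available off the shelf (the escape-function construction only yields $\sup$-type constants $C_0$). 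The paper circumvents both problems at once by building $L^1$-type norms directly from the positive (co-)resonant leaf measures: $\|f\|_V=\int|f|\,d\nu$ for $0$-forms (Lemma~\ref{new}) and the inductive contraction/wedge norm of Proposition~\ref{realnorm} for $d_s$-forms, which handle the H\"older regularity of the splitting and give the \emph{exact} submultiplicative bound $\|e^{t\mathbf P}f\|\le e^{tP}\|f\|$ with no $\epsilon$ loss and no constant. This is then fed into the parametrix $(\mathbf P+\lambda)\tilde Q=\mathrm{Id}-\tilde R(\lambda)$ to kill the spectral projector for $\mathrm{Re}(\lambda)>P$ (Lemma~\ref{negative}). (For the separation of the other $\mathcal C_k$, $k\ne d_s$, the paper avoids the regularity issue entirely by instead comparing $\tr(\Lambda^k\mathcal P_\gamma)$ with $\tr(\Lambda^{d_s}\mathcal P_\gamma)$ in the Guillemin trace formula.)

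Second, Jordan blocks. An $\epsilon$-lossy propagator bound only gives spectral radius $\le 1$ for $\tilde R(\lambda)$, not the uniform bound $\sup_k\|\tilde R(\lambda)^k\|<\infty$; the latter is what forces the pole of $(z-\tilde R(\lambda))^{-1}$ at $z=1$ to be simple in the paper's Lemma~\ref{convergence}, and it comes precisely from the exact $L^1(\nu)$ contraction. Deprived of that, your proposal leans entirely on the non-degeneracy of the pairing between resonant and co-resonant states along the whole critical axis, via $\langle(\mathbf P-\lambda)v,u_\lambda^*\rangle=\langle v,(\mathbf P^*-\bar\lambda)u_\lambda^*\rangle=0$. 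That identity is fine, but the non-degeneracy is exactly the hard point: the general ``dual bases'' statement (Lemma~\ref{spectral projector}) presupposes semisimplicity, so you cannot invoke it, and for $\mathrm{Im}(\lambda)\ne 0$ the resonant state is a unimodular function $h$ times the positive leaf system, so you must exhibit the correctly matched co-resonant state (essentially $h$ times the dual system, with the right complex conjugations so the pairing computes $\int|h|^2\,d\mu>0$ rather than an oscillatory integral that may vanish). This can be done, but as written it is asserted rather than proved, and it is the step that would fail if left at this level of detail. I would either import the paper's $L^1$-norm mechanism, or carry out the matched-pair construction explicitly.
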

\begin{figure}
{\includegraphics[width=10cm]{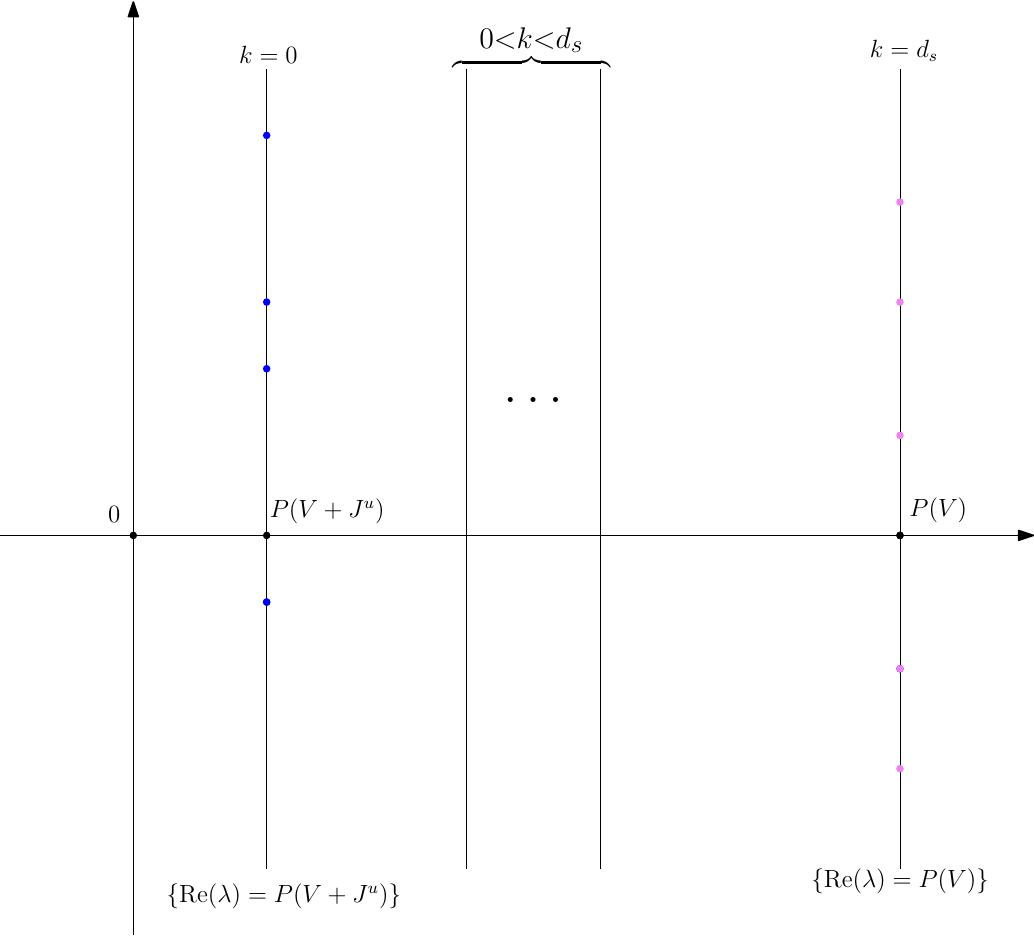}}
\caption{Critical axes for different values of $k$. According to Theorem \ref{mainTheo}, the resonances in purple cannot exist if the flow is weakly mixing with respect to $\mu_V$ and the resonances in blue cannot exist if the flow is weakly mixing with respect to $\mu_{V+J^u}$. The position of the critical axes for intermediate values of $k$ should be linked to the pressure on the span of largest Lyapunov exponents.}
\label{fig:fig}
\end{figure}

We note that for hyperbolic maps, similar results were already obtained by various authors, using the formalism of anisotropic Banach spaces. The study for the action of $0$-forms can be found it \cite[Theorem 7.5]{Ba} and for $d_s$-forms, it can be found in \cite[Theorem 5.1]{GouLi}. We also remark that Adam and Baladi used anisotropic techniques to study the first resonant state for an contact Anosov flows in dimension $3$ for the potential $V=-J^u$ in \cite{AdBa}\footnote{Because the unstable Jacobian is not smooth, this is actually, strictly speaking, out of the range of Theorem \ref{mainTheo}.}.
\subsection{Equilibrium states.}
The leaf measures constructed in \cite{CPZ19,CPZ20,Cli} can be used to reconstruct the \emph{equilibrium state} (i.e the unique invariant probability measure that maximises the variational principle recalled in  \eqref{eq:variational}). In our context, this means that the equilibrium state can be reconstructed from the spectral theory of the Anosov vector field $X$.  
Define the divergence of $X$ by the relation $\mathcal L_X \mathrm{vol}=\mathrm{div}_{\mathrm{vol}}(X) \mathrm{vol}$ for the Riemannian volume $\mathrm{vol}$. Then the $L^2$-adjoint of $\mathbf P$ acting on $0$-forms is $\mathbf P^*=X+V+\mathrm{div}_{\mathrm{vol}}(X).$ Applying Theorem \ref{mainTheo} to the adjoint,  gives two co-resonant states $\nu$ and $m^u_V$ and one has
\begin{equation}
\label{eq:system1}
\begin{cases}(-X+V-P(V+J^u))\eta=0,\quad  \mathrm{WF}(\eta)\subset E_u^*, 
\\ \ (X+V+\mathrm{div}_{\mathrm{vol}}(X)-P(V+J^u))\nu=0,\quad \mathrm{WF}(\nu) \subset E_s^*
\end{cases}
\end{equation}
as well as
\begin{equation}
\label{eq:system2}
\begin{cases}
(-\mathcal L_X+V-P(V))m_V^s=0, \quad \mathrm{WF}(m^s_V)\subset E_u^*,
\\ \  (\mathcal L_X+V-P(V))m_V^u=0,\quad \mathrm{WF}(m^u_V) \subset E_s^*.
\end{cases}
\end{equation}
The wavefront set bounds allow us to take the distributional pairing of the resonant and co-resonant states $\eta$ and $\nu$ (resp. $m^s_V$ and $m^u_V$). The resulting distribution is easily seen to be a measure invariant by the flow. The next theorem asserts that this measure is actually the equilibrium state  for the potential $V+J^u$ (resp. $V$).
As a consequence, the presence of other resonances on $\mathcal C_0$ (resp. $\mathcal C_{d_s})$ is linked to mixing properties of the flow. In the rest of the paper, we denote by $\mu_W$ the equilibrium measure associated to the Hölder continuous potential $W$.
\begin{theo}
\label{equilibriumtheo}
Under Assumption \ref{assumption}, one has 
\begin{equation}
\begin{cases}
\exists c>0, \ \mu_{V+J^u}=c\eta \times \nu,
\\ \exists c'>0, \ \mu_V=c'm^s_V\wedge \alpha \wedge m^u_V, \ \alpha(X)=1, \ \alpha(E_u\oplus E_s)=0.
\end{cases}
\end{equation}
Let $\chi$ be a cutoff $\chi\in C_c^{\infty}([0,T+\epsilon[, [0,1])$ such that $\chi \equiv 1$ on $[0,T]$ for any $\epsilon>0$ and for $T>0$ large enough. Then one has\footnote{The notation $f^{(k)}$ denotes the $k$-th convolution product of a function $f$ with itself.}:
\begin{itemize}
\item For any $f\in C^{\infty}(\mathcal M)$, 
 \begin{equation}
\label{eq:average}
-\lim_{k\to +\infty} \int_0^{+\infty}(\chi')^{(k)}(t )\big((\varphi_t)_*\nu\big)(f)dt=c\mu_{V+J^u}(f), \quad c>0.
\end{equation}
\item Fix an orientation of $E_u^*$ and let $\omega\in C^0(\mathcal M; \Lambda^{d_s}E_u^*)$ be a non-negative section which does not vanish identically. Then one has, for any $f\in C^{\infty}(\mathcal M)$, 
\begin{equation}
\label{eq:average2}
-\lim_{k\to +\infty} \int_0^{+\infty}(\chi')^{(k)}(t )\big((\varphi_t)_*(\omega \wedge \alpha \wedge m^u_V)\big)(f)dt=c\mu_{V}(f), \quad c>0.
\end{equation}
\end{itemize}
Finally, one has:
\begin{itemize}
\item The flow is topologically mixing if and only if it is not a constant time suspension if and only if 
$$\mathrm{Res}_0\cap \mathcal C_0=\{P(V+J^u)\},\quad \mathrm{Res}_{d_s}\cap \mathcal C_{d_s}=\{P(V)\}. $$
 \end{itemize}
\end{theo}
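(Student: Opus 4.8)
The plan is to split the argument into three parts --- a product formula for the two equilibrium states, the averaging formulas, and the mixing dichotomy --- each part feeding the next.

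\emph{Product formulas.} Granting, as observed just above, that $\eta\times\nu$ and $m^s_V\wedge\alpha\wedge m^u_V$ are flow invariant measures, it remains to show they are nonzero and proportional to $\mu_{V+J^u}$ and $\mu_V$. Nonvanishing follows from Theorem \ref{mainTheo}: since $P(V+J^u)$ is a simple resonance without Jordan block, the pairing $\langle\eta,\nu\rangle$ is nonzero, so $\eta\times\nu$ --- a positive measure, $\eta$ and $\nu$ being positive --- has nonzero total mass, and likewise for $m^s_V\wedge\alpha\wedge m^u_V$. For the identification I would unwind the construction of Theorem \ref{Construction}: in a flow box $\simeq W^s_{\mathrm{loc}}\times W^u_{\mathrm{loc}}\times(-\delta,\delta)$ the measure $\eta$ is $m^s_{V+J^u}$ smeared by Lebesgue measure along the unstable and flow directions, while $\nu$ --- produced by applying Theorem \ref{mainTheo} to the adjoint, equivalently to the time reversed flow, which exchanges stable and unstable leaf measures --- is $m^u_{V+J^u}$ smeared along the stable and flow directions; thus $\eta\times\nu$ is locally $m^s_{V+J^u}\otimes m^u_{V+J^u}\otimes dt$ up to a smooth positive density, which is exactly the local product description of $\mu_{V+J^u}$ of Climenhaga, Pesin and Zelerowicz \cite{Cli,CPZ19,CPZ20}. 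Since $\mu_{V+J^u}$ is ergodic, a flow invariant measure locally equivalent to it must be a constant multiple of it, giving $\mu_{V+J^u}=c\,\eta\times\nu$; the relation $\mu_V=c'\,m^s_V\wedge\alpha\wedge m^u_V$ is obtained the same way on $d_s$ forms, the factor $\alpha$ encoding the flow direction.

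\emph{Averaging formulas.} I would study $t\mapsto G(t):=\big((\varphi_t)_*\nu\big)(f)$ through its Laplace transform. For $\mathrm{Re}(\lambda)$ large this is a matrix element of the meromorphically continued resolvent of $\mathbf P$ on the anisotropic Sobolev spaces --- here one uses that $\nu$ was built in Theorem \ref{Construction} precisely so that the $V$--twist is retained under the flow --- whose poles of largest real part lie on $\mathcal C_0$ and whose residue at the real resonance $P(V+J^u)$ is, by the product formula of the first step, a positive multiple of $\mu_{V+J^u}(f)$. By Fourier--Laplace inversion, $\int_0^\infty(\chi')^{(k)}(t)G(t)\,dt$ is a contour integral of $(\widehat{\chi'}(\lambda))^k$ against this matrix element; deforming the contour onto and slightly past $\mathcal C_0$, legitimate given polynomial resolvent bounds in a strip, turns it into a finite sum $\sum_j(\widehat{\chi'}(\lambda_j))^k\,c_j\,\mu_j(f)$ over the resonances $\lambda_j\in\mathcal C_0$, up to a term negligible after normalisation. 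Because $\chi'$ has a fixed sign on its (small) support, $|\widehat{\chi'}(\lambda)|\le\int_0^{\infty}e^{-\mathrm{Re}(\lambda)t}|\chi'(t)|\,dt$ with equality, on the line $\mathrm{Re}(\lambda)=P(V+J^u)$, only at $\lambda=P(V+J^u)$; hence $\big(\widehat{\chi'}(\lambda_j)/\widehat{\chi'}(P(V+J^u))\big)^k\to 0$ for every other $\lambda_j\in\mathcal C_0$, so only the leading residue survives as $k\to\infty$, giving \eqref{eq:average} with the stated sign and constant $c>0$. Formula \eqref{eq:average2} is proved identically with $\omega\wedge\alpha\wedge m^u_V$ in place of $\nu$, the non--negativity of $\omega$ being used precisely to rule out that the leading residue $\propto\mu_V(f)$ vanishes.

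\emph{Mixing dichotomy, and the main obstacle.} The equivalence ``topologically mixing $\iff$ not a constant time suspension'' is the classical dichotomy for transitive Anosov flows. For the equivalence with the spectral condition, suppose $\lambda_0+i\xi\in\mathcal C_0$ with $\lambda_0=P(V+J^u)$ and $\xi\neq 0$; let $u$ be a corresponding resonant state and form the complex measure $u\times\nu$ (well defined by the wavefront bounds). The algebra of the two equations gives $\mathcal L_X(u\times\nu)=-i\xi\,(u\times\nu)$, i.e.\ the density $g:=d(u\times\nu)/d\mu_{V+J^u}$ satisfies $g\circ\varphi_t=e^{-i\xi t}g$, a nonconstant Koopman eigenfunction in $L^1(\mu_{V+J^u})$, so $\mu_{V+J^u}$ is not weakly mixing --- but an equilibrium state of a Hölder potential of a transitive Anosov flow is weakly mixing unless the flow is a constant time suspension of an Anosov diffeomorphism, and conversely such a suspension of period $\tau$ has a resonance $\lambda_0+\tfrac{2\pi i n}{\tau}\in\mathcal C_0$ for every $n\in\ZZ$, obtained by lifting the leading eigenfunction of the base transfer operator and twisting by $e^{2\pi i n s/\tau}$. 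Chaining these equivalences yields the characterisation of $\mathrm{Res}_0\cap\mathcal C_0$, and that of $\mathrm{Res}_{d_s}\cap\mathcal C_{d_s}$ follows verbatim with $\mu_V$ in place of $\mu_{V+J^u}$. The main difficulty I foresee is the analytic heart of the averaging step: establishing that the Laplace transform of $\big((\varphi_t)_*\nu\big)(f)$ is a matrix element of the continued resolvent of the $V$--twisted operator $\mathbf P$ whose \emph{leading} residue is $\mu_{V+J^u}$ rather than, say, the SRB measure --- this is exactly where the precise construction of $\nu$ in Theorem \ref{Construction} and the product formula of the first part are essential --- together with the uniform resolvent bounds in a vertical strip needed to shift the contour and to let $k\to\infty$.
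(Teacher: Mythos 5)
Your architecture (product formula $\to$ averaging $\to$ mixing dichotomy) matches the paper's, and your mechanism for the averaging step --- that $\widehat{\chi'}$ has modulus strictly less than $1$ at every critical-axis resonance other than the real one, so convolution powers kill the subleading contributions --- is exactly the paper's Lemma \ref{convergence}. But two of your steps have genuine gaps where the paper takes a different, and necessary, route.

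First, the product formula. You propose to ``unwind'' Theorem \ref{Construction} and identify $\eta\times\nu$ locally with $m^s_{V+J^u}\otimes m^u_{V+J^u}\otimes dt$ up to a smooth density. This misdescribes the objects: by \eqref{eq:1}--\eqref{eq:3}, $\nu$ is locally $m^u_{q,V+J^u}\times m^{ws}_{q,J^u}$ weighted by the holonomy factors $e^{w^+_{V+J^u}}e^{w^-_{J^u}}$ --- the transverse factor is the weak-stable leaf measure for the potential $J^u$ (the SRB conditionals), not Lebesgue, and the densities are only H\"older transversally. Moreover $\eta\times\nu$ is defined by wavefront-set calculus via \eqref{eq:def}, and identifying that distributional product with a na\"ive fiberwise product of the two local product structures is precisely the nontrivial content; you assert it rather than prove it. The paper avoids this entirely: it proves the Gibbs-type upper bound $\nu(B_t(q,r))\leq Ce^{S_t(V+J^u)(q)-tP(V+J^u)}$ (imported from \cite[Theorem 3.10]{Cli} together with \eqref{eq:3}), deduces $\nu\ll\mu_{V+J^u}$ with bounded density from the Gibbs characterization \eqref{eq:Gibbs}, then uses the averaging identity $c(\eta\times\nu)(f)=-\lim_k\int(\chi')^{(k)}(t)\,\nu(e^{tX}f)\,dt$ and invariance of $\mu_{V+J^u}$ to get $\eta\times\nu\ll\mu_{V+J^u}$, and finally invokes ergodicity to conclude the flow-invariant density is constant. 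Note that in the paper the product formula and the averaging formula \eqref{eq:average} are proved \emph{simultaneously} from the identity $\Pi_0(P(V+J^u))1=c\eta$, not in the order you propose.

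Second, the averaging step itself. Your contour deformation onto and past $\mathcal C_0$ is declared ``legitimate given polynomial resolvent bounds in a strip.'' Such bounds are not established anywhere in the paper and are not known for general Anosov flows (they are a hard open problem outside special classes such as contact flows); resonances may also accumulate on the critical axis, so the ``finite sum over $\lambda_j\in\mathcal C_0$'' is not available either. The paper sidesteps all of this: $\int_0^\infty(\chi')^{(k)}(t)e^{-t\lambda_0}e^{t\mathbf P}dt=\tilde R(\lambda_0)^k$, and Lemma \ref{convergence} proves $\tilde R(\lambda_0)^k\to\Pi_0(\lambda_0)$ in $\mathcal L(\mathcal H^s)$ using only the Fredholm decomposition $\tilde R(\lambda_0)=\Pi_0(\lambda_0)+K(\lambda_0)$ with $K$ of spectral radius $<1$, which in turn rests on the $L^1(\mathcal M,\nu)$ contraction estimate of Lemma \ref{new}. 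Your mixing argument has a smaller soft spot of the same kind: you produce a Koopman eigenfunction in $L^1(\mu_{V+J^u})$, whereas the spectral characterization of weak mixing you invoke lives in $L^2$; the paper gets an $L^\infty$ density directly from Proposition \ref{measure on the axis} (all critical-axis co-resonant states are absolutely continuous with respect to $\nu$ with bounded density), and for the converse upgrades the $L^2$ eigenfunction to a smooth one via \cite[Theorem 4.3]{Wal} before multiplying it into $\nu$.
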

In particular for $V=0$, the theorem gives a construction of the SRB-measure for the action on $0$-forms (resp. the measure of maximal entropy for the action on $d_s$-forms) from a solution of \eqref{eq:system1} (resp. \eqref{eq:system2}) for $P(J^u)=0$ (resp. for $P(0)=\mathrm{h}_{\mathrm{top}}(\varphi_1)$).
\subsection{Ruelle zeta function.}
The resolvents acting on $\mathscr E_0^k$ are linked to the (weighted) \emph{Ruelle zeta function}:
\begin{equation}
\label{eq:Zeta}
\zeta_V(\lambda):=\prod_{{\gamma\in \Gamma^{\sharp}}}\big(1-e^{-(\lambda+V_{\gamma}) T_{\gamma}}\big), \ \ V_{\gamma}:=\frac{1}{T_{\gamma}}\int_0^{T_{\gamma}}V(\gamma(t))dt,
\end{equation}
where $\Gamma^{\sharp}$ is the set of primitive geodesics and $T_{\gamma}$ denotes the period of the closed geodesic $\gamma$. This function can be shown to be convergent and holomorphic in a half plane $\{\mathrm{Re}(\lambda)\gg 1\}$. In a celebrated paper \cite{GLP}, Giulietti, Liverani and Pollicott proved that the function $\zeta_R^V$ admits a meromorphic extension to the whole complex plane. Another proof, using microlocal analysis, was given by Dyatlov and Zworski in \cite{DyaZw}.

 More precisely, let us introduce the \emph{Poincaré} map $\mathcal P:\gamma\in\Gamma\mapsto \mathcal P_{\gamma}:=d\varphi_{-T_{\gamma}}|_{E_s\oplus E_u} $. The link between the resolvents on forms and the Ruelle zeta function is given by the \emph{Guillemin trace formula}. In particular, \cite[Equation (2.5)]{DyaZw} gives
%
%
\begin{equation}
\label{eq:zetta}
\zeta_V'(\lambda)/\zeta_V(\lambda)=\sum_{k=0}^{n-1}(-1)^{k+q} e^{-\lambda t_0}\tr^{\flat}\big(\varphi_{-t_0}^*\big((\mathbf{P})_{|\mathscr E_0^k}-\lambda)^{-1}\big) \big),\end{equation}
where the shift by a small time $t_0$ is a technicality to ensure that the pullbacked resolvent $\varphi_{-t_0}^*(\mathbf{P}-\lambda)^{-1}$ satisfies the wavefront set condition which makes its flat trace well defined (see \cite[Section 4]{DyaZw}). This shows that the meromorphic extension of $\zeta_V$ follows from the extension of the resolvent acting on the space $\mathscr E_0^k$ for any $k$ (plus some additional arguments). Moreover, we get the poles of $\zeta_V$ by studying poles of each resolvent. 

The study of the first pole of the Ruelle zeta function can be found in \cite[Theorem 9.2]{PaPo}.
\begin{theo*}[Parry-Pollicott]
Let $V$ be a Hölder continuous potential and suppose that the flow is Anosov and weakly topologically mixing. Then the Ruelle zeta function $\zeta_V$ is non zero and analytic in the half plane $\{Re(\lambda)\geq P(V)\}$ except for a simple pole at $\lambda=P(V)$, where $P(V)$ is the topological pressure of the potential $V$.
\end{theo*}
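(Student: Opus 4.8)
The plan is to deduce the statement from the microlocal description \eqref{eq:zetta} of the Ruelle zeta function together with Theorems \ref{mainTheo} and \ref{equilibriumtheo}; this recovers Parry--Pollicott for smooth $V$ and flows obeying Assumption \ref{assumption}, and at the end I indicate how the original symbolic argument covers the general Hölder case. First I would record the meromorphic continuation: for each $k$ the resolvent $(\mathbf P_{|\mathscr E_0^k}-\lambda)^{-1}$ extends from $\{\mathrm{Re}(\lambda)\gg1\}$ to a meromorphic family on $\CC$ with poles exactly on $\mathrm{Res}_k$ (the anisotropic-space construction behind \eqref{eq:eqdiff}), so by the Guillemin trace formula \eqref{eq:zetta} the logarithmic derivative $\zeta_V'/\zeta_V$ continues meromorphically with poles contained in $\bigcup_{k=0}^{n-1}\mathrm{Res}_k$; hence $\zeta_V$ continues meromorphically and all of its zeros and poles sit inside $\bigcup_k\mathrm{Res}_k$.

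Next I would localize that set in $\{\mathrm{Re}(\lambda)\ge P(V)\}$. For every $k\ne d_s$ one has $\mathrm{Res}_k\subset\{\mathrm{Re}(\lambda)\le\sup_{\mu\in\mathrm{Res}_k}\mathrm{Re}(\mu)\}$, and by Theorem \ref{mainTheo} that supremum is at most $P(V)-\delta$; while $\mathrm{Res}_{d_s}\subset\{\mathrm{Re}(\lambda)\le P(V)\}$ with equality only on $\mathcal C_{d_s}$. So the zeros and poles of $\zeta_V$ in $\{\mathrm{Re}(\lambda)\ge P(V)\}$ all lie on the line $\{\mathrm{Re}(\lambda)=P(V)\}$ and in $\mathrm{Res}_{d_s}\cap\mathcal C_{d_s}$. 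I then use weak topological mixing: a constant-time suspension has a circle factor and so is not weakly mixing, hence our flow is not a constant-time suspension, and the last bullet of Theorem \ref{equilibriumtheo} forces $\mathrm{Res}_{d_s}\cap\mathcal C_{d_s}=\{P(V)\}$. Therefore $\zeta_V$ is holomorphic and non-vanishing on $\{\mathrm{Re}(\lambda)\ge P(V)\}\setminus\{P(V)\}$, and it remains only to determine the order of $\zeta_V$ at $\lambda=P(V)$.

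For that I would return to \eqref{eq:zetta}: at $\lambda=P(V)$ only the $k=d_s$ summand is singular (no other $\mathrm{Res}_k$ meets $P(V)$), and by Theorem \ref{mainTheo} the resonance there has no Jordan block and is simple, so $(\mathbf P_{|\mathscr E_0^{d_s}}-\lambda)^{-1}=-\Pi/(\lambda-P(V))+\mathcal O(1)$ with $\Pi$ the rank-one spectral projector onto $\mathrm{Span}(m_V^s)$, proportional to $m_V^s\otimes m_V^u$ normalized by their pairing. A short flat-trace computation for $\tr^\flat(\varphi_{-t_0}^*\Pi)$, using the equivariance relations \eqref{eq:system2} and the fact — from Theorem \ref{equilibriumtheo} — that $m_V^s\wedge\alpha\wedge m_V^u$ is a positive multiple of the probability measure $\mu_V$ (so the pairing does not vanish), then shows that $\zeta_V'/\zeta_V$ has a simple pole at $P(V)$ with residue of absolute value one; the sign, hence whether $\zeta_V$ itself has a pole or a zero there, is pinned down by the positivity $\zeta_V'/\zeta_V(\lambda)=\sum_{\gamma}\sum_{m\ge1}T_\gamma e^{-m(\lambda+V_\gamma)T_\gamma}>0$ on $(P(V),+\infty)$, which identifies the singularity of $\zeta_V$ at $P(V)$ as a simple pole.

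The bulk of the work — and the main obstacle — is this last step: not the existence or simplicity of the singularity, which is essentially Theorem \ref{mainTheo}, but keeping track of orientations and signs linking the flat trace $\tr^\flat(\varphi_{-t_0}^*\Pi)$, the combinatorial factor $(-1)^{k+q}$ in \eqref{eq:zetta}, and the normalization of $d_s$-currents, so as to conclude a genuine pole rather than a zero and a residue of absolute value exactly one. Finally, since Theorem \ref{mainTheo} is stated for smooth potentials, the Hölder case calls for the classical route: code the flow by a Bowen--Markov family, write $\zeta_V$ through the Ruelle transfer operator on the resulting suspension over a subshift of finite type, and apply the Ruelle--Perron--Frobenius theorem — the operator at parameter $\lambda$ has a simple leading eigenvalue equal to $1$ precisely when $\mathrm{Re}(\lambda)=P(V)$, weak mixing excludes other unimodular eigenvalues on that line, and the simple pole of $\zeta_V$ at $P(V)$ comes from this simple eigenvalue crossing $1$.
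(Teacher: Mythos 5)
You should first be aware that the paper does not prove this statement at all: it is quoted verbatim from \cite[Theorem 9.2]{PaPo}, and the genuine Parry--Pollicott proof is the symbolic-dynamics one you relegate to your final sentence (Markov sections, Ruelle transfer operators on a suspension of a subshift, Ruelle--Perron--Frobenius). Your microlocal route through \eqref{eq:zetta}, Theorem \ref{mainTheo} and Theorem \ref{equilibriumtheo} is really a proof of the paper's Corollary \ref{corro}, i.e.\ the smooth-potential, Assumption-\ref{assumption} analogue; it cannot give the Hölder case as stated, because Theorems \ref{mainTheo}--\ref{equilibriumtheo} are simply unavailable there (and the meromorphic continuation of the resolvents on $\mathscr E_0^k$ also requires smooth $V$). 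So the one-sentence appeal to the classical argument is not a remark at the end --- for the statement as posed it \emph{is} the proof, and it is only sketched. Within the smooth case your structure (localization of $\bigcup_k\mathrm{Res}_k$, exclusion of other resonances on $\mathcal C_{d_s}$ via the Anosov alternative, simplicity and absence of Jordan blocks at $P(V)$) is sound and correctly identifies the flat-trace bookkeeping at $\lambda=P(V)$ as the remaining work.

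There is, however, a concrete error in your last inference. With the paper's definition \eqref{eq:Zeta}, $\zeta_V(\lambda)=\prod_\gamma\bigl(1-e^{-(\lambda+V_\gamma)T_\gamma}\bigr)$ with \emph{no} reciprocal, one has exactly the formula you wrote, $\zeta_V'/\zeta_V=\sum_\gamma\sum_{m\ge1}T_\gamma e^{-m(\lambda+V_\gamma)T_\gamma}>0$, and this sum diverges to $+\infty$ as $\lambda\to P(V)^+$. A positive residue of $\zeta_V'/\zeta_V$ at $P(V)$ means $\zeta_V$ has a simple \emph{zero} there, not a pole: if $\zeta_V(\lambda)=(\lambda-P(V))^m g(\lambda)$ with $g$ holomorphic and nonvanishing, then $\zeta_V'/\zeta_V\sim m/(\lambda-P(V))$, so positivity forces $m=+1$. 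The ``simple pole'' in the Parry--Pollicott statement presupposes their convention $\prod_\gamma\bigl(1-e^{-(\lambda+V_\gamma)T_\gamma}\bigr)^{-1}$, for which $\zeta_V'/\zeta_V$ is the \emph{negative} of your sum. The paper itself mixes these two conventions (compare \eqref{eq:Zeta} with the quoted theorem and Corollary \ref{corro}), so the trap is not of your making, but as written your deduction ``positivity $\Rightarrow$ pole'' is backwards, and this is precisely the sign you claimed to be pinning down.
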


As a consequence of our first two theorems, we get an analogue of the theorem of Parry and Pollicott on the first pole of the Ruelle zeta function.
\begin{manualcorr}{1}[Critical axis of the zeta function]
\label{corro}
Suppose the flow to be topologically mixing, Anosov and let $V$ be a smooth real-valued potential, then the Ruelle zeta function $\zeta_V^R$ is analytic and non vanishing in the half plane $\{Re(\lambda)\geq P(V)\}$ except for a simple pole at $\lambda=P(V)$.
\end{manualcorr}
We note that a consequence of this corollary and a standard Tauberian argument is the following asymptotic growth:
$$\sum_{\gamma \in \Gamma, T_{\gamma}\leq t}e^{V_{\gamma}T_{\gamma}}=\sum_{\gamma \in \Gamma, T_{\gamma}\leq t}e^{\int_0^{T_\gamma}V(\gamma(t))dt}\sim\frac{e^{tP(V)}}{tP(V)}. $$
For $V=0$, this result is known as the Prime Orbit Theorem and can be found in \cite[Theorem 9.3]{PaPo}.
\subsection{Regularity of the pressure.}
Another consequence of our first theorems is the following regularity statement of the topological pressure. It was first established by Katok and al in \cite{KKPW} and Contreras in \cite{Con}.
\begin{manualcorr}{2}[Smoothness of the topological pressure]
\label{2}
Let $\mathcal V^{\infty}_{\mathrm{t}}$ denote the set of smooth transitive Anosov flows. Then it is an open set\footnote{This point follows from \cite[Proposition 1.6.30]{FishHas} which proves that topological transitivity is preserved by orbit conjugacy and the structural stability of Anosov flows, see \cite[Corollary 5.4.7]{FishHas}.} and the maps
\begin{align*}P_1: (X,V)\in \mathcal V^{\infty}_{\mathrm{t}}\times C^{\infty}(\mathcal M)\mapsto P_X(V),  
\\ P_2: (X,V)\in \mathcal V^{\infty}_{\mathrm{t}}\times C^{\infty}(\mathcal M)\mapsto P_X(V+J^u)
\end{align*}
where $P_X$ denotes the topological pressure for the flow induced by the vector field $X$, are smooth.

\end{manualcorr}
\subsection{Complex potential}
Consider a smooth complex valued potential $W=V+iU\in C^{\infty}(\mathcal M, \mathbb C)$ with $V,U\in C^{\infty}(\mathcal M, \mathbb C)$. While proving Theorem \ref{mainTheo}, we will obtain the following result.
\begin{manualcorr}{3}[Complex potential]
\label{complex}
For a smooth complex valued potential $W=V+iU$
\begin{equation}
    \label{eq:inclusion}
    \begin{split}
    &\mathrm{Res}_0(W)\subset \{\lambda \in \mathbb C\mid \mathrm{Re}(\lambda)\leq P(V+J^u)\},
    \\& \mathrm{Res}_{d_s}(W)\subset \{\lambda \in \mathbb C\mid \mathrm{Re}(\lambda)\leq P(V)\}.
    \end{split}
\end{equation}
Moreover, one has $\mathrm{Res}_0(W)\cap \{ \mathrm{Re}(\lambda)= P(V+J^u)\}\neq \emptyset$ and $\mathrm{Res}_{d_s}(W)\cap \{ \mathrm{Re}(\lambda)= P(V)\}\neq \emptyset$ if and only if $U$ satisfies
\begin{equation}
    \label{eq:arithm}
   \exists \theta \in \mathbb R, \ \forall \gamma\in \Gamma, \quad T_\gamma(U_{\gamma}-\theta)\in 2\pi \mathbb Z.
\end{equation}
\end{manualcorr}
The position of the first resonance for a general complex valued potential is, to the best of the author's knowledge, an open question. 

\smallsection{Outline of article}
{\color{blue}}
\begin{itemize}
\item In Section \ref{Preli} we recall some important features of Anosov flows and of the thermodynamical formalism. Then we will review microlocal methods for the study of Anosov flows. Finally, we will recall the construction of leaf measures.
\item In Section \ref{sec4}, we recall the definition of Ruelle resonances using a parametrix construction. Then, we define in Theorem \ref{Construction} a co-resonant state for the action on $0$-forms. This allows us to precisely locate the critical axis at $\{\mathrm{Re}(\lambda)=P(V+J^u)\}$.
The rest of the section is devoted to the study of resonant states on the critical axis and more precisely to the proofs of the results announced in the introduction.
\item In Section \ref{sec5} we prove the equivalent results for the action on $d_s$-forms. The strategies of the proofs remain the same but some additional care is needed when adapting certain arguments to this case.
\item In Section \ref{sec6} we give a proof of first part of Theorem \ref{mainTheo} and of Corollary \ref{2}.
\item In Section \ref{seccomplex}, we prove Corollary \ref{complex}.
\end{itemize}

\textbf{Acknowledgements.} The author would like to thank Colin Guillarmou and Thibault Lefeuvre for introducing him to to this problem and more generally to the field, as well as for their careful and precious guidance during the writing of this paper. 

The author would also like to thank Gabriel Paternain and Gabriel Rivière for making some comments on an earlier version of the paper and Julien Moy for helping with making Figure \ref{fig:fig}.

\section{Preliminaries}
\label{Preli}
\subsection{Anosov flow}
\label{anoosov}
Our main assumption on the flow is that it is \emph{Anosov}.
\begin{defi}[Anosov]
\label{Anosov}
The flow $\varphi_t$ is Anosov $($or uniformly hyperbolic$)$ if
\begin{itemize}
\item There is a continuous splitting of the tangent space
\begin{equation}
\label{eq:Decomposition}
T_x\mathcal M=E_u(x)\oplus E_s(x)\oplus \mathbb R X(x).  
\end{equation}
\item The decomposition is flow-invariant, meaning that
\begin{equation}
\label{eq:flowinv}
\forall t\in \mathbb R,\,\, E_u(\varphi_t(x))=(d\varphi_t)_x( E_u(x)),\,\,E_s(\varphi_t(x))=(d\varphi_t)_x( E_s(x)).
\end{equation}
\item There are uniform constants $C>0$ and $\theta>0$ such that for every $x\in M$, we have
$$|(d\varphi_t)_x(v_s)|_g\leq Ce^{-\theta t}|v_s|_g,\quad \quad \forall t\geq 0, \,\, \forall v_s\in E_s(x).$$
$$|(d\varphi_t)_x(v_u)|_g\leq Ce^{-\theta |t|}|v_u|_g,\quad \quad \forall t\leq 0, \,\, \forall v_u\in E_u(x).$$
\end{itemize}
We will denote by $d_u$ and $d_s$ the dimensions of $E_u$ and $E_s$ respectively.
\end{defi}

An important class of examples is given by geodesic flows on the unit tangent bundle $\mathcal M=SM$ of a negatively curved closed Riemannian manifold $M$.
We recall the stable manifold theorem, see \cite[Theorem 6.4.9]{KaHas}.

For all $x\in \mathcal M$, there exists immersed submanifolds 
\begin{equation}
\label{eq:W^{s,u}}
\mathcal W^{s,u}(x):=\{y\in \mathcal M\mid d(\varphi_t(x),\varphi_t(y))\to_{t\to \pm\infty}0\}, 
\end{equation}
where $+$ (resp. $-$) corresponds to $s$ (resp. $u$), called the (strong) stable (resp. unstable) manifolds, such that 
$ T_x \mathcal W^{s,u}=E_{s,u}.$
Moreover, $x\mapsto \mathcal W^{s,u}(x)$ are (Hölder continuous) foliations of $\mathcal M$. We also define the weak stable and unstable manifolds
\begin{equation}
\label{eq:W^{ws,wu}}
\mathcal W^{ws,wu}:=\{y\in \mathcal M\mid \exists t_0\in \mathbb R, \ d(\varphi_{t}(x),\varphi_{t+t_0}(y))\to_{t\to \pm\infty}0\}=\bigcup_{t\in \mathbb R}\varphi_t(\mathcal W^{s,u}(x)), 
\end{equation}
their tangent spaces are given respectively by $\mathbb R X\oplus E_s$ and $\mathbb R X\oplus E_u$.

A consequence of the existence of these (un)stable manifolds is the \emph{local product structure},  see \cite[Proposition 6.4.13]{KaHas}.

For any $x_0\in \mathcal M$, there exists a neighborhood $V$ of $x_0$ such that for any $\epsilon>0$, there is a $\delta>0$ such that
$$\forall x,y\in V, \ d(x,y)\leq \delta \ \Rightarrow \ \exists |t|\leq \epsilon, \ \ \mathcal W_{\epsilon}^{u}(\varphi_t(x))\cap \mathcal W_{\epsilon}^{s}(y)=:\{[ x,y]\}, $$
where we denoted by $\mathcal N_{\epsilon}$ the $\epsilon$-ball of the manifold $\mathcal N$. The point $[x,y]$ is called the Bowen bracket of $x$ and $y$ and  $t(x,y)$ is the Bowen time. 
For $q\in \mathcal M$, we define a local rectangle to be
\begin{equation}
\label{eq:R}
R_q:=\{\Psi_q(x,y):=[x,y]\mid x\in \mathcal W^u(q)\cap B(q,\delta), \ y\in \mathcal W^{ws}(q)\cap B(q,\delta)\}.
\end{equation}

\subsection{Thermodynamical formalism}
\label{Thermo}
We recall here the main features from the thermodynamical formalism we will need. The two main objects are the \emph{topological pressure} and the \emph{equilibrium state}. Consider a Hölder continuous  and real valued  potential $V$.

We first recall the \emph{variational principle} (see \cite[Theorem 9.3.4]{FishHas}), which we will state in the case of smooth flows.

Let $(\mathcal M,g)$ be a closed Riemannian manifold and $\varphi_t$ be a smooth flow on $\mathcal M$, let $V:\mathcal M \to \mathbb R$ be a Hölder continuous potential, then
\begin{equation}
\label{eq:variational}
P(\varphi_1,V):=\sup_{\mu \in \mathfrak M(\varphi)}\left(h_{\mu}(\varphi_1)+\int_{\mathcal M}Vd\mu\right), 
\end{equation}
where $h_{\mu}$ is the metric entropy and $\mathfrak M(\varphi)$ is the set of invariant-probability Borel measures and $P(\varphi_1,V)$ is the \emph{toplogical pressure} associted to $V$.

Now, we can define \emph{equilibrium state} as a measure that achieves the maximum, the existence and uniqueness of such a measure can be obtained under Assumption \ref{assumption}, we will use the following result  (see \cite[Theorem 7.3.6]{FishHas} and \cite[Theorem 3.3]{BoRu}).

The equilibrium state associated to $V$ is unique, ergodic and has full support. If the flow is topologically mixing, then the flow is weak mixing with respect to the equilibrium state $\mu_V$. 

In the case of an Anosov flow, an equivalent characterization of equilibrium state is given by the \emph{Gibbs property} (see \cite[Theorem 4.3.26]{FishHas}). Indeed, $\mu$ is the equilibrium state for $V$ if and only if
\begin{equation}
\label{eq:Gibbs}\forall \delta>0, \ \exists C>0, \forall t>0, \forall q\in \mathcal M,\quad C^{-1}\leq \mu(B_t(q,\delta))e^{tP(V)-S_tV(q)}\leq C. 
\end{equation}
Here, $B_t(q,\delta)$ denotes the \emph{Bowen ball} defined in \eqref{eq:BowenBall} and $S_tV(q):=\int_0^tV(\varphi_s q)ds.$

We define a special potential called the \emph{unstable Jacobian} by the following formula:
\begin{equation}
\label{eq:Ju}
J^u(x):=-\frac{d}{dt}\mathrm{det}(d\varphi_t(x)_{|E_u(x)})|_{t=0}=:-\frac{d}{dt}j_t(x)|_{t=0}=-\frac{d}{dt}\ln j_t(x)|_{t=0},
\end{equation}
where the determinant is taken with respect to the Riemannian measure $\mathrm{vol}$.
 
The \emph{equilibrium state} associated to the unstable Jacobian is the Sinai-Ruelle-Bowen (SRB) measure whose pression vanishes : $P(J^u)=0$, see \cite[Corollary 7.4.5]{FishHas}.

\subsection{Anisotropic spaces}
\label{Anisotrop}

To a given decomposition \eqref{eq:Decomposition}, we can associate a corresponding splitting of the cotangent space. This will be more natural as we will use microlocal analysis.

There is a continuous slitting 
$
T^*_xM=E_u^*(x)\oplus E_s^*(x)\oplus E_0^*(x), 
$
defined by 
\begin{equation} 
\label{eq:dual}
E_s^*(x)\big(E_s(x)\oplus \mathbb R {X}(x)\big)=0,\,\,E_u^*(x)\big(E_u(x)\oplus \mathbb R {X}(x)\big)=0,\,\,E_0^*(x)\big(E_s(x)\oplus E_u(x)\big)=0. 
\end{equation}
Moreover, this decomposition is flow-invariant and there exists constants $C,\theta>0$ such that, uniformly in $x\in M$, we have
$$|(d\varphi_{-t})^T_x(\xi_s)|\leq Ce^{-\theta t}|\xi_s|,\quad \quad \forall t\geq 0, \,\, \forall \xi_s\in E_s^*(x).$$
$$|(d\varphi_{-t})^T_x(\xi_u)|\leq Ce^{-\theta |t|}|\xi_u|,\quad \quad \forall t\leq 0, \,\, \forall \xi_u\in E_u^*(x).$$

The following result, which is due to Faure, Roy, Sjöstrand in \cite{Fau08} constructs an anisotropic order function. 

 There exists an order function $m(x,\xi)$ taking its values in $[-1,1]$ and an escape function 
$G_m(x,\xi):=m(x,\xi)\log |\xi|, $ such that:
\begin{itemize}
\item The order function $m(x,\xi)$ only depends on the direction $\xi/|\xi|\in S^*M$ for $|\xi|\geq 1$ and is equal to $1$ $($resp. $-1)$ in a conical neighborhood of $E_s^*$ $($resp. $E_u^*)$.
\item The escape function decreases along trajectories, that is
$$\exists R>0,|\xi|\geq R,\,\, \mathbf{X}(G_m)(x,\xi)\leq 0\footnote{For this inequality, one should work with an adapted metric and $\mathbf{X}$ denotes the symplectic lift of the vector field $X$.}. $$
\end{itemize}

We fix now an order function $m$ and consider the corresponding symbol class.
We refer to \cite[Appendix]{Fau08} for the detailed construction. What is important to understand here is that the order function $m(x,\xi)$ constructed below gives rise to a symbol class $S^{m(x,\xi)}$ on which we can perform quantization. These quantized symbols are called pseudo-differential operators of order $m(x,\xi)$ and can be viewed as bounded operator from the anisotropic Sobolev space $H^{m(x,\xi)}$ to $L^2(M)$. The anisotropic Sobolev space $H^{m(x,\xi)}$ is defined by means of an \emph{elliptic} operator in the anisotropic symbol class $S^{m(x,\xi)}_{\rho}$ (see \cite[Appendix, definition 7]{Fau08}). 

The symbol $\exp(G_m)$ belongs to the anisotropic class $S^{m(x,\xi)}_{\rho}$, for every $\rho<1$ and if we fix a quantization $\mathrm{Op}$ (see for instance \cite[Chapter 4]{Zw})
$$\hat A_m:=\mathrm{Op}(\exp(G_m)) $$
is a pseudo-differential operator which is elliptic and, up to changing the symbol by a $O(S^{m(x,\xi)-(2\rho-1)})$ term, it can be made formally self-adjoint and invertible on $C^{\infty}(M)$.
For $s\in \mathbb R$, we define the Sobolev space of order $sm(x,\xi)$ to be 
$
\mathcal H^s:=\hat A_{sm}^{-1}(L^2(M)).$
In the following, the $L^2$ spaces will be associated to the Riemannian volume form $\mathrm{vol}$.
The following properties hold:
\begin{itemize}
\item The space $\mathcal H^s$ is a Hilbert space with inner product
$$(\varphi_1,\varphi_2)_{\mathcal H^s}:=(\hat A_{sm} \varphi_1,\hat A_{sm} \varphi_2)_{L^2}, $$
makes $\hat A_{sm}$ a unitary operator from $\mathcal H^s$ to $L^2$.
\item For a pseudo-differential operator $A\in \Psi^{sm(x,\xi)}$, $A$ is an unbounded operator on $L^2(M)$ with domain given by $\mathcal H^s$.
\item The space $\mathcal H^{-s}$ can be identified to the dual of $\mathcal H^s$ by
\begin{equation}
\label{eq:dual2}
(\varphi,\psi)_{\mathcal H^s\times \mathcal H^{-s}}:=(\hat A_{sm} \varphi, \hat A_{sm}^{-1} \psi)_{L^2}
\end{equation}
and the duality extends the usual $L^2$-pairing.
\item If $f\in C^{\infty}(\mathcal M)$, then for any $ \varphi \in \mathcal H^s, \ \psi\in \mathcal H^{-s}$,
\begin{equation}
\label{eq:compute}
(f\varphi,\psi)_{\mathcal H^s\times \mathcal H^{-s}}=(\varphi,\bar f\psi)_{\mathcal H^s\times \mathcal H^{-s}}
\end{equation}
\end{itemize}

\label{Ruelle}
We now use the microlocal techniques introduced in \cite{Fau08,Fau10} to define the \emph{Ruelle resonances}. We first define the \emph{transfer operator} associated to the Anosov flow. 

The transfer operator $e^{t \bf P}: L^2(\mathcal M, \mathrm{vol}) \to L^2(\mathcal M,\mathrm{vol})$ (where $\mathbf{P}:=-X+V$) is given, for any $f\in C^{\infty}(M)$, by
$$e^{t \bf P}f(x):=\exp\left( \int_0^t V(\varphi_{-s}(x))ds\right)f(\varphi_{-t}(x))=:\exp\big(S_tV(\varphi_{-t}x)\big)f(\varphi_{-t}(x)). $$
We define the exponential growth in $L^2$-norm of $e^{t\mathbf{P}}$ by
$$C_0(\mathbf{P})=C_0:=\limsup_{t\to +\infty}\frac 1t\ln \|e^{t\mathbf{P}}\|_{L^2\to L^2}.  $$
The fact that $C_0$ is finite  is a consequence of $e^{t\mathbf{P}}$ being a semi-group.

We obtain the existence of the meromorphic extension of the resolvent to the whole complex plane and the fact that Ruelle resonances are contained in $\{\mathrm{Re}(\lambda)\leq C_0\}$. We refer to \cite[Theorem 9.11]{Lef} for a proof of the following theorem.
\begin{theo}[Faure Sjöstrand]
\label{Resolvent+V}
There exists $c>0$ such that for any $s> 0$, we have, for any $\lambda$ such that $\mathrm{Re}(\lambda)> -cs+C_0$, that $\mathbf{P}-\lambda$ is Fredholm of index $0$ as an operator
\begin{equation}
\label{eq:X}
\mathbf{P}-\lambda: \mathrm{Dom}(\mathbf{P})\cap \mathcal H^s=\{u\in \mathcal H^s \mid \mathbf{P} u\in \mathcal H^s\}\to \mathcal H^s.
\end{equation}
Moreover, the resolvent 
\begin{equation}
\label{eq:resolv}
R_+(\lambda)=(\mathbf{P}-\lambda)^{-1}=(-X+V-\lambda)^{-1}: \mathcal H^s\to \mathcal H^s 
\end{equation}
is well defined, bounded and holomorphic for $\{\mathrm{Re}(\lambda)> C_0\}$ and has a meromorphic extension to $\{\mathrm{Re}(\lambda)> -cs+C_0\}$ which is independent of any choice. Thus, the resolvent, viewed as an operator $C^{\infty}(M)\to \mathcal D'(M)$, has a meromorphic extension to the whole complex plane. The poles of this extension are called the \emph{Ruelle resonances}, they are located in $\{\mathrm{Re}(\lambda)\leq C_0\}$. On $\{\mathrm{Re}(\lambda)> C_0\}$, one has 
\begin{equation}
\label{eq:resolventeintegrale}
(\mathbf{P}-\lambda)^{-1}f=\int_{0}^{+\infty}e^{t(\mathbf{P}-\lambda)}fdt=\int_{0}^{+\infty}\!\!\exp\left( \int_0^t V(\varphi_{-s}(x))ds\right)e^{-t({X}+\lambda)}fdt.
\end{equation}
\end{theo}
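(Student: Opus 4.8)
The plan is to recall the microlocal argument of Faure and Sjöstrand. It has three ingredients --- a semigroup argument in $L^{2}$, a microlocal escape (radial) estimate on the anisotropic spaces, and analytic Fredholm theory --- followed by a consistency check. Start with the region $\{\mathrm{Re}(\lambda)>C_{0}\}$. As $(e^{t\mathbf P})_{t\ge 0}$ is a strongly continuous semigroup on $L^{2}(\mathcal M,\mathrm{vol})$ with $\|e^{t\mathbf P}\|_{L^{2}\to L^{2}}\le C_{\varepsilon}e^{(C_{0}+\varepsilon)t}$ for every $\varepsilon>0$, the Laplace transform $\int_{0}^{+\infty}e^{t(\mathbf P-\lambda)}f\,dt$ converges absolutely in $L^{2}$ whenever $\mathrm{Re}(\lambda)>C_{0}$ and represents $(\mathbf P-\lambda)^{-1}$ there, holomorphically in $\lambda$; writing $e^{t\mathbf P}f$ out explicitly gives the integral formula \eqref{eq:resolventeintegrale}. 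Replacing $\|\cdot\|_{L^{2}}$ by the equivalent norm $f\mapsto\big(\int_{0}^{\infty}e^{-2C_{0}'t}\|e^{t\mathbf P}f\|_{L^{2}}^{2}\,dt\big)^{1/2}$, finite for any $C_{0}'>C_{0}$, yields $\mathrm{Re}\langle\mathbf P u,u\rangle\le C_{0}'\|u\|^{2}$ in that norm, so it is enough to prove the remaining assertions with $C_{0}$ replaced by an arbitrary $C_{0}'>C_{0}$ and then let $C_{0}'\downarrow C_{0}$; we keep writing $C_{0}$.

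The heart of the matter is the escape estimate on $\mathcal H^{s}$. Conjugating by the elliptic operator $\hat A_{sm}$ turns the study of $\mathbf P-\lambda$ on $\mathcal H^{s}$ into that of $\mathbf P_{s}-\lambda:=\hat A_{sm}(\mathbf P-\lambda)\hat A_{sm}^{-1}$ on $L^{2}$, and the pseudodifferential calculus gives $\mathbf P_{s}=\mathbf P+B_{s}$ with $B_{s}$ of order $0$ and principal symbol $s\,\mathbf X(G_{m})$ (the commutator of $\hat A_{sm}$ with the multiplication operator $V$ being of negative order). Now $\mathbf X(G_{m})\le 0$ for $|\xi|\ge R$, and in fact $\mathbf X(G_{m})\le -c_{0}<0$ outside any fixed conic neighbourhood $\mathcal U$ of $E_{0}^{*}$: near $E_{u}^{*}$ and $E_{s}^{*}$ because $m$ is locally constant while $|\xi|$ is expanded, resp. contracted, by the cotangent flow, and on the transition region because $m$ strictly decreases along the flow and $\log|\xi|\to+\infty$. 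Inside $\mathcal U$, on the other hand, $-X$ is microlocally elliptic since $|\langle\xi,X(x)\rangle|\gtrsim|\xi|$ there. Combining the sharp Gårding inequality applied to $\mathrm{Re}(\mathbf P_{s}-\lambda)$ microlocally away from $\mathcal U$ with an elliptic estimate inside $\mathcal U$ yields, for $\mathrm{Re}(\lambda)>C_{0}-cs$ with $c>0$ depending only on $G_{m}$,
$$\|u\|_{\mathcal H^{s}}\le C\,\|(\mathbf P-\lambda)u\|_{\mathcal H^{s}}+C\,\|u\|_{\mathcal H^{s'}},\qquad s'<s.$$
Since $\mathcal H^{s}\hookrightarrow\mathcal H^{s'}$ is compact, $\mathbf P-\lambda$ has finite-dimensional kernel and closed range; running the same argument for the formal adjoint $\mathbf P^{*}=X+V+\mathrm{div}_{\mathrm{vol}}(X)$ on $\mathcal H^{-s}$ (an operator of the same type with $E_{u}^{*}$ and $E_{s}^{*}$ exchanged and time reversed) bounds the cokernel, so $\mathbf P-\lambda$ is Fredholm on $\{\mathrm{Re}(\lambda)>C_{0}-cs\}$. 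Its index is locally constant there, hence equal to $0$ because $e^{t\mathbf P}$ extends (again by the escape estimate) to a strongly continuous semigroup on $\mathcal H^{s}$, whose generator is $\mathbf P$ and whose resolvent inverts $\mathbf P-\lambda$ for $\mathrm{Re}(\lambda)$ large.

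Analytic Fredholm theory then furnishes a meromorphic continuation of $R_{+}(\lambda)=(\mathbf P-\lambda)^{-1}$ to $\{\mathrm{Re}(\lambda)>C_{0}-cs\}$ with finite-rank principal parts. For $f\in C^{\infty}(\mathcal M)$ this continuation equals, for $\mathrm{Re}(\lambda)$ large, the integral $\int_{0}^{\infty}e^{t(\mathbf P-\lambda)}f\,dt$, hence the $L^{2}$ resolvent of the first step; by uniqueness of analytic continuation the resulting family $C^{\infty}(\mathcal M)\to\mathcal D'(\mathcal M)$ is therefore independent of $s$, of the order function $m$ and of the quantization, and it exhausts $\mathbb C$ as $s\to+\infty$ since $c$ does not depend on $s$. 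Finally, $R_{+}$ is holomorphic on $\{\mathrm{Re}(\lambda)>C_{0}\}$ by the first step, so all of its poles --- the Ruelle resonances --- lie in $\{\mathrm{Re}(\lambda)\le C_{0}\}$.

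I expect the main obstacle to be the escape estimate with the \emph{sharp} threshold $C_{0}$: a direct computation of $\mathrm{Re}\langle\mathbf P_{s}u,u\rangle$ only produces a bound governed by $\sup\!\big(V+\tfrac12\mathrm{div}_{\mathrm{vol}}(X)\big)$, which is generally strictly larger than $C_{0}$, and recovering $C_{0}$ requires both the renorming of the first step and a careful bookkeeping of the lower-order terms in $\mathbf P_{s}-\mathbf P$, of the sharp Gårding remainder, and of the microlocal cutoffs separating the elliptic region $\mathcal U$ from the region where $\mathbf X(G_{m})$ is genuinely negative.
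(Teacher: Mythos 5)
Your argument is correct and is essentially the original Faure--Sj\"ostrand proof: conjugate by $\hat A_{sm}$, use the escape function to get a microlocal a priori estimate $\|u\|_{\mathcal H^s}\le C\|(\mathbf P-\lambda)u\|_{\mathcal H^s}+C\|u\|_{\mathcal H^{s'}}$, deduce Fredholmness of $\mathbf P-\lambda$ itself, and continue by analytic Fredholm theory. The paper, however, does not prove the theorem this way: it cites \cite[Theorem 9.11]{Lef}, whose proof (and the one the rest of the paper re-uses, see \eqref{eq:parametrix} in Lemma \ref{negative} and the decomposition of $\tilde R$ in Lemma \ref{pole}) is the parametrix route. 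There one sets $\tilde Q(\lambda)=\int_0^{\infty}\chi(t)e^{t(\mathbf P-\lambda)}dt$ for a cutoff $\chi$ equal to $1$ on $[0,T]$, computes $(\mathbf P-\lambda)\tilde Q(\lambda)=\mathrm{Id}-\tilde R(\lambda)$ with $\tilde R(\lambda)=-\int\chi'(t)e^{t(\mathbf P-\lambda)}dt$, and shows that $\tilde R(\lambda)$ splits on $\mathcal H^s$ as a compact (smoothing) piece plus a piece of norm $\le\tfrac12$ once $T$ is large; analytic Fredholm theory is then applied to $\mathrm{Id}-\tilde R(\lambda)$ and $R_+(\lambda)=\tilde Q(\lambda)(\mathrm{Id}-\tilde R(\lambda))^{-1}$. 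The two routes are equivalent in outcome, but each buys something: the parametrix approach obtains the sharp threshold $C_0$ essentially for free, since the non-compact part of $\tilde R(\lambda)$ is supported at times $t\ge T$ and its norm is directly bounded by $e^{-T(\mathrm{Re}\lambda-C_0-\epsilon)}$ --- precisely the difficulty you flag at the end as the delicate point of your approach, where the direct computation of $\mathrm{Re}\langle\mathbf P_s u,u\rangle$ only sees $\sup(V+\tfrac12\mathrm{div}_{\mathrm{vol}}X)$ and must be repaired by renorming. It also produces the operators $\tilde Q(\lambda)$ and $\tilde R(\lambda)$ on which all of Sections \ref{sec4} and \ref{sec5} are built. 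Your approach, on the other hand, yields the Fredholm property of $\mathbf P-\lambda$ on its natural domain \eqref{eq:X} directly rather than through the factorization, and makes the role of the radial-type estimates at $E_u^*$ and $E_s^*$ more transparent. Both are legitimate; just be aware that carrying out your plan at the sharp threshold $C_0$ requires the extra renorming and bookkeeping you describe, whereas the cited proof avoids it.
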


\begin{rmk}
\label{remm}
The previous construction can be made for a general smooth\footnote{We insist on the fact that smoothness is important and this leads to some technical difficulties when working on forms.} Hermitian bundle $\mathscr E$ and in particular for the bundle of forms in the kernel of the contraction, see \cite[Appendix C]{DyaZw} for more details. We will not specify the dependance in $\mathscr E$ in the rest of the paper.
\end{rmk}

We sum up in the end of the subsection the characterization of generalized eigenfunctions: the \emph{resonant states.} 

A complex number $\lambda_0$ is a Ruelle resonance if and only if there exists a distribution $u\in \mathcal D'(\mathcal M)$ with wavefront set contained in $E_u^*$ (see for instance \cite[Lemma 5.1]{DFG}) such that $(\mathbf P -\lambda_0)u=0$, we will then say that $u$ is a \emph{resonant state}. We will sometime write $\mathrm{Res}$ the set of Ruelle resonances and $\mathrm{Res}_{\lambda_0}$ the set of resonant states associated to $\lambda_0$.
\begin{equation}
\label{eq:resonant}
\mathrm{Res}_{\lambda_0}:=\{u\in \mathcal D'(\mathcal M)\mid (\mathbf P -\lambda_0)u=0, \ \mathrm{WF}(u)\subset E_u^*\}. \end{equation}

We have a corresponding version for the co-resonant states (defined in the next subsection). The wavefront set condition then becomes $\mathrm{WF}(u)\subset E_s^*$ and this will be the version of the proposition we will use in the proof of Theorem \ref{Construction}.

We finish the subsection by discussing \emph{generalized resonant states} and the presence of Jordan blocks.
More precisely, if we consider the meromorphic extension $R_+(\lambda)$  constructed in Theorem \ref{Resolvent+V}, then $\lambda_0\in \mathrm{Res}$ is and only if $\lambda_0$ is a pole of the meromorphic extension. In this case, the spectral projector at $\lambda_0$ is 
$$ \Pi^{+}_{\lambda_0}=\frac{1}{2i\pi}\int_{\gamma}R_+(z)dz,$$
where $\gamma$ is a small loop around $\lambda_0$. Moreover, we can use the analytic Fredholm theorem to deduce that the resolvent has the following expansion:
$$R_+(\lambda)=R_+^H(\lambda)+\sum_{j=1}^{N(\lambda_0)}\frac{(\mathbf{P} -\lambda_0)^{j-1}\Pi_{\lambda_0}^+}{(\lambda-\lambda_0)^j}, $$
where $R_+^H(\lambda)$ is the holomorphic part near $\lambda_0$. The \emph{generalized resonant states} are:
\begin{equation}
\label{eq:Jordan}
\mathrm{Res}_{\lambda_0,\infty}:=\Pi_{\lambda_0}^+(\mathcal H^s)=\Pi_{\lambda_0}^+(C^{\infty}(\mathcal M))=\{u\in \mathcal H^s\mid (\mathbf{P}-\lambda_0)^{N(\lambda_0)}u=0\}.
\end{equation}
\begin{rmk}
\label{JordanBlock}
We will say that the Ruelle resonance $\lambda_0$ has no Jordan block is $\mathrm{Res}_{\lambda_0,\infty}=\mathrm{Res}_{\lambda_0}$. Note that if $N(\lambda_0)=1$, i.e the resolvent has a pole of order at most $1$, then there is no Jordan block. This will be used in Lemma \ref{pole} and Lemma \ref{realnorm}  to show that resonances on the critical axes have no Jordan blocks.
\end{rmk}

\subsection{Equilibrium states from dimension theory}
\label{sec:Climenhaga}
In this section, we recall Climenhaga's construction from \cite{Cli} of leaf measures $m^u_V$ and $m^s_V$ using dimension theory.

There are two main ways equilibrium states are defined. The first one is through the use of \emph{Markov partitions} and the second one is via the use of the \emph{specification property}. A third approach is given by dimension theory: the goal is to generalize the idea of \emph{Haussdorff dimension} and \emph{Haussdorff measure} to a more dynamical setting. We recall the definition of the Haussdorff dimension for a metric space $(X,\delta)$. For $d\geq 0$ and $\epsilon>0$, define the $d$-dimensional Haussdorff measure by
$$H^d_{\epsilon}(S):=\inf \{\sum_{i=0}^{\infty} \mathrm{diam}(U_i)^d \mid S\subset \bigcup_{i\in \mathbb N}U_i, \ \mathrm{diam}(U_i)<\epsilon\},$$
for any subset $S$ and where the infimum is taken over all countable covering of sets $U_i$ with diameter less than $\epsilon$. We define an outer measure by taking $$H^d(S)=\lim_{\epsilon\to +\infty} H^d_{\epsilon}(S)\in [0,+\infty].$$
We then define the Haussdorff dimension of $X$ to be
$$\mathrm{dim}_{\mathrm{Hauss}}(X):=\inf\{d\geq 0 \mid H^d(X)=+\infty\}=\sup\{d\geq 0 \mid H^d(X)=0\}. $$

The idea of Climenhaga, Pesin and Zelerowicz (already present in essence in \cite{Ham} and \cite{Has}, where the case of the measure of maximal entropy was treated) was to replace the sets with small diameters by more dynamical objects, namely, coverings should be made of \emph{Bowen balls} (defined below) and we should let time $t\to +\infty$.
\begin{equation}
\label{eq:BowenBall}
B_t(x,r):=\{y\in \mathcal M \mid \max_{s\in[0,t]}d(\varphi_s x,\varphi_s y)<r\}. 
\end{equation}


Let $\delta_0>0$ be the size of the local (un)stable manifolds, then fix thereafter $\delta\in ]0,\delta_0[$  and define $\mathcal W^{\bullet}(x,\delta):=B(x,\delta)\cap \mathcal W^{\bullet}(x)$ where $\bullet=u,s,ws,wu$. We will not always specify in the rest of the paper the dependance in $\delta$ if it is not relevant to the argument.

Let $x\in \mathcal M$, consider $Z\subset  \mathcal W^u(x,\delta)$. Define, for $T>0$,
$$ \mathbb E(Z,T):=\{\mathscr E \subset M\times [T,+\infty[, \ Z\subset \bigcup_{(x,t)\in \mathscr E}B_t(x,r, \mathcal W^u(x,\delta))\}. $$
Let $\alpha \in \mathbb R$, then we define a measure $m^{\alpha}_{ \mathcal W^u(x,\delta)}=m^{\alpha}$ by putting
$$m^{\alpha}(Z)=\lim_{T\to+\infty}\inf_{\mathscr E\in \mathbb E(Z,T)}\sum_{ (x,t)\in \mathscr E}e^{S_tV(x)-t\alpha}. $$

We then retrieve the Caratheodory dimension as a treshold just like in the case of the Haussdorff measure, this is a result due to Pesin, see \cite[Prop. 1.1 and 1.2]{Pes}. We have moreover, in this case that the measure for $\alpha=\alpha_C$ defines a Borel measure, see \cite[Lemm. 2.14]{Cli}.

The measure $m^{\alpha}$ defined above is an outer measure for any $\alpha \in \mathbb R$ and 
$$P(V):=\inf \{\alpha \mid m^{\alpha}( W(x,\delta ))=+\infty\}=\sup\{\alpha \mid m^{\alpha}( \mathcal W^u(x,\delta))=0\}. $$
Moreover, $m^{P(V)}$ is a Borel measure, denoted by $m^u_{x}$ and called the leaf measure. It satisfies
\begin{equation}
\label{eq:measure}
\forall Z \subset \mathcal W^u(x_1,\delta), \
m^u_x(Z)=\lim_{T\to+\infty}\inf_{\mathscr E\in \mathbb E(Z,T)}\sum_{ (x,t)\in \mathscr E}e^{S_tV(x)-tP(V)}.
\end{equation}

Up until now, we have defined a system of (unstable) leaf measures $\{m^u_x\mid  x\in \mathcal M\}$ satisfying the two following conditions:
\begin{itemize}
\item Support: each measure $m^u_x$ is supported in $\mathcal W^u(x,\delta)$.
\item Compatibility: if $Z\subset \mathcal W^u(x_1,\delta)\cap \mathcal W^u(x_2,\delta)$ is a Borel set, then the two measures agree, i.e $m^u_{x_1}(Z)=m^u_{x_2}(Z)$.
\end{itemize}
The set of measures defined in \eqref{eq:measure} has actually two more important properties: it is $\varphi_t$-conformal and behaves naturally with holonomies (see \cite[Theo 3.4]{Cli}).

The system of (unstable) leaf measures $\{m^u_x\mid  x\in \mathcal M\}$ defined in \eqref{eq:measure} is $\varphi_t$-conformal, namely, for any $x\in \mathcal M$ and $t\in \mathbb R$, the measures $(\varphi_t)_*m^u_x$ and $m^u_{\varphi_t x}$ are equivalent and more precisely:
\begin{equation}
\label{eq:change of variable}
m^u_{\varphi_t x}(\varphi_t Z)=\int _Z e^{tP(V)-S_tV(z)}dm^u_x(z). 
\end{equation}
In terms of Radon-Nikodym derivatives, we have
\begin{equation}
\label{eq:RN}
\frac{d((\varphi_{-t})_*m^u_{\varphi_t x})}{dm^u_x}(z)=e^{tP(V)-S_tV(z)},\quad \frac{d((\varphi_{t})_*m^u_{ x})}{dm^u_{\varphi _tx}}(\varphi_t z)=e^{S_tV(z)-tP(V)}.
\end{equation} 

We define the notion of \emph{holonomy} between (weak-un)stable leaves.

Given $\mathcal W^u(x_1,\delta),\mathcal W^u(x_2,\delta)$ for $x_1,x_2\in \mathcal M$, a \emph{weak-stable $\delta$-holonomy} between $\mathcal W^u(x_1,\delta)$ and $\mathcal W^u(x_2,\delta)$ is a homeomorphism $\pi:\mathcal W^u(x_1,\delta)\to \mathcal W^u(x_2,\delta)$ such that $\pi(z)\in \mathcal W^{ws}(z,\delta)$ for all $z\in \mathcal W^u(x_1,\delta)$. 

To conclude, we give the change of variable formula for holonomies. We first introduce a useful function.
Let $\delta_0>0$ be small enough, given $x\in \mathcal M$ and $y\in \mathcal W^{ws}(x,\delta_0)$, define
\begin{equation}
\label{eq:w+}
w_V^+(x,y)=S_{t(x,y)}V(x)+t(x,y)P(V)+\int_0^{+\infty}(V(\varphi_{t(x,y)+\tau}(x))-V(\varphi_{\tau}(y)))d\tau. 
\end{equation}
Here, $t(x,y)$ is the Bowen time and the integral converges because $d(\varphi_{t+\tau}x,\varphi_{\tau}y)\to 0$ exponentially fast and $V$ is Hölder continuous.

We define for $x\in \mathcal M$ and $y\in \mathcal W^{wu}(x,\delta_0)$ the quantity 
$$w_V^-(x,y)=-S_{t(x,y)}V(x)+t(x,y)P(V)+\int_0^{+\infty}(V(\varphi_{t-\tau}x)-V(\varphi_{-\tau}y))d\tau. $$

We note that we have the special values:
$$\forall y\in \mathcal W^s(x), \quad w^+_V(x,y)=\int_0^{+\infty}(V(\varphi_{\tau}x)-V(\varphi_{\tau}y))d\tau, $$
as well as
\begin{equation}
\label{eq:floww}
w^+(x,\varphi_t x)= S_{t}V(x)-tP(V).
\end{equation}
Finally, we have the cocycle relation 
\begin{equation}
\label{eq:cocycle}
\forall y,z\in \mathcal W^{ws}(x,\delta_0/2), \quad w^+_V(x,y)=w^+_V(x,z)+w^+_V(z,y).
\end{equation}

The holonomy we will mostly be interested in will be given by
$\pi: \mathcal W^u(q,\delta)\to  \mathcal W^u(p,\delta), \quad \pi(x):=[x,p]$ and we can now state the second change of variable theorem, see \cite[Theorem 3.4]{Cli} for more details.

Consider the system of (unstable) leaf measures $\{m^u_x\mid  x\in \mathcal M\}$ defined in \eqref{eq:measure}. Let $\mathcal W^u(x_1,\delta),\mathcal W^u(x_2,\delta)$ for $x_1,x_2\in \mathcal M$ and let $\pi:\mathcal W^u(x_1,\delta)\to \mathcal W^u(x_2,\delta)$ be a weak-stable $\delta_0$ holonomy. Then the measures $\pi_*m_{x_1}^u$ and $m_{x_2}^u$ are equivalent, and we have
\begin{equation}
\label{eq:COVpi}
\frac{d(\pi_*m^u_{x_1})}{dm^u_{x_2}}(\pi(z))=e^{w^+_V(z,\pi z)}.
\end{equation}

\section{Resolvent acting on functions}
\label{sec4}
In this section we study the action of $\mathbf{P}$ on functions. The goal is to locate the critical axis, show that it is given by $\{\mathrm{Re}(\lambda)=P(V+J^u)\}$ and to study the co-resonant states associated to resonances on this axis.
\subsection{Construction of the co-resonant state}
\label{duality}
We first prove that $P(V+J^u)$ is indeed a resonance, called the \emph{first resonance}. In the case of a null potential ($V=0$), this is trivial as constant functions lie trivially in the kernel of the flow. In this case, the equilibrium state (which is usually referred to as the SRB measure) can be identified as the co-resonant state for the Ruelle resonance $0$.

Following Section \ref{Ruelle}, we can define Ruelle resonances for the $L^2$-adjoint $\mathbf{P}^*=X+V-\mathrm{div}_{\mathrm{vol}}(X)$. The resonant states for $\mathbf{P}^*$ are referred to as \emph{co-resonant} states for $\mathbf{P}$ and their span has the same dimension as the the span of the resonant states. Equivalently, we can define the adjoint by duality, using relation \eqref{eq:dual2}:
\begin{equation}
\label{eq:duality}
\forall f,g\in C^{\infty}(\mathcal M), \quad (\mathbf P f,g)_{\mathcal H^s\times \mathcal H^{-s}}=( f,\mathbf P^*g)_{\mathcal H^s\times \mathcal H^{-s}},
\end{equation}
from which we can define an unbounded operator 
\begin{equation}
\label{eq:X*}
\mathbf{P}^*: \mathrm{Dom}(\mathbf{P}^*)\cap \mathcal H^{-s}=\{u\in \mathcal H^{-s} \mid \mathbf{P}^* u\in \mathcal H^{-s}\}\to \mathcal H^{-s}.
\end{equation}
Now, a direct adaptation of \cite[lemma 5.3]{GBGHW} yields.
\begin{lemm}[Co-resonant states]
\label{spectral projector}
Let $\lambda\in \mathbb C$, then $\lambda$ is a Ruelle resonance for $\mathbf{P}$ if and only if $\overline{\lambda}$ is a Ruelle resonance for $\mathbf{P}^*$. In this case, the space of resonant and co-resonant states have the same dimension $m$. If we consider $u_1,\ldots, u_m\in \mathcal D'_{E_u^*}(\mathcal M):=\{u\in \mathcal D'\mid \mathrm{WF}(u)\subset E_u^*\}$ and $v_1,\ldots, v_m\in \mathcal D'_{E_s^*}(\mathcal M):=\{u\in \mathcal D'\mid \mathrm{WF}(u)\subset E_s^*\}$ two basis of the resonant states for $\mathbf{P}$ and $\mathbf{P}^*$ respectively satisfying $( u_i,v_j)=\delta_{i,j}$, then the projector $\Pi_0(\lambda)$ on the space of resonant states for the resonance $\lambda$ is given by
\begin{equation}
\label{eq:projo}
\Pi_0(\lambda)=\sum_{i=1}^mu_i\otimes v_i.
\end{equation}
\end{lemm}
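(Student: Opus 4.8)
The statement is an adaptation of \cite[Lemma 5.3]{GBGHW}, so the plan is to transport that argument through the anisotropic Sobolev framework set up in Section \ref{Ruelle}, keeping track of the duality \eqref{eq:dual2}. First I would observe that the spaces $\mathcal H^s$ and $\mathcal H^{-s}$ are in (antilinear) duality via \eqref{eq:dual2}, and that under this pairing the operator $\mathbf P^*$ defined in \eqref{eq:X*} is genuinely the Banach-space adjoint of $\mathbf P-\lambda:\mathrm{Dom}(\mathbf P)\cap\mathcal H^s\to\mathcal H^s$. Since $\mathbf P-\lambda$ is Fredholm of index $0$ (Theorem \ref{Resolvent+V}), its adjoint $\mathbf P^*-\bar\lambda$ is also Fredholm of index $0$ with $\dim\ker(\mathbf P^*-\bar\lambda)=\dim\ker(\mathbf P-\lambda)=\dim\mathrm{coker}(\mathbf P-\lambda)$; this gives the first two assertions. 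One must also check that the meromorphic continuation used to define resonances for $\mathbf P^*$ is the one obtained by taking adjoints of the continuation of $R_+(\lambda)$ — this is where the holomorphic-family/analytic-Fredholm bookkeeping of \cite{GBGHW} is reused — so that $\lambda\in\mathrm{Res}(\mathbf P)\iff\bar\lambda\in\mathrm{Res}(\mathbf P^*)$ as \emph{poles}, not merely as kernels at a fixed point.

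Next, for the wavefront statements, I would recall that a resonant state for $\mathbf P$ at $\lambda$ lies in $\mathcal H^s$ for $s$ large, and by the radial-source/sink estimates (as in \cite{DFG, Fau08}) has wavefront set in $E_u^*$; symmetrically, a resonant state for $\mathbf P^*=X+V-\mathrm{div}_{\mathrm{vol}}(X)$ at $\bar\lambda$ has wavefront set in $E_s^*$ because the roles of stable and unstable directions are exchanged when one reverses the flow. Since $E_u^*\cap E_s^*=\{0\}$ away from the zero section and, more precisely, $E_u^*\cap(-E_s^*)=\emptyset$, Hörmander's theorem on multiplication of distributions shows that the pairing $(u_i,v_j)$ is well defined; the normalization $(u_i,v_j)=\delta_{ij}$ can then be arranged by a change of basis provided the pairing matrix is nondegenerate, which follows from the perfectness of the $\mathcal H^s\times\mathcal H^{-s}$ duality restricted to the finite-dimensional resonant/co-resonant subspaces.

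Finally, to identify the spectral projector I would compute the residue of $R_+(\lambda)$ at the resonance. Writing the Laurent expansion from \eqref{eq:Jordan} and using that, on the critical axes we care about, $N(\lambda_0)=1$ (so there is no Jordan block, cf. Remark \ref{JordanBlock}), the residue is exactly the rank-$m$ projector $\Pi_0(\lambda)$ onto $\mathrm{Res}_\lambda$ along a complement. One then checks that $\sum_i u_i\otimes v_i$ has the same range (the $u_i$), is idempotent because $(u_i,v_j)=\delta_{ij}$, and has the correct kernel: for $w\in\mathcal H^s$ in the range of $\mathbf P-\lambda$ one has $(w,v_j)=((\mathbf P-\lambda)\tilde w,v_j)=(\tilde w,(\mathbf P^*-\bar\lambda)v_j)=0$, so $\sum_i u_i\otimes v_i$ annihilates it; equality of the two projectors follows since a projector is determined by its range and kernel. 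The main obstacle I anticipate is not any single estimate but the careful matching of the two meromorphic continuations — ensuring that ``resonance of $\mathbf P^*$'' defined intrinsically via its own anisotropic spaces coincides with ``conjugate of a pole of $R_+$'' — together with verifying the wavefront-set localization $\mathrm{WF}(v_j)\subset E_s^*$ so that the distributional pairing, and hence the formula \eqref{eq:projo}, makes sense; both are handled in \cite{GBGHW} and only need to be reproduced in the present notation.
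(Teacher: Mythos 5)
Your proposal is correct and follows essentially the same route as the paper, whose entire proof consists of the citation ``a direct adaptation of \cite[Lemma 5.3]{GBGHW}'': the Fredholm-index-zero duality between $\mathcal H^s$ and $\mathcal H^{-s}$, the matching of the two meromorphic continuations, the wavefront-set bounds making the pairing $(u_i,v_j)$ well defined, and the residue computation identifying $\Pi_0(\lambda)$ are exactly the content of that reference transported to the present setting. Your explicit caveat that the formula \eqref{eq:projo} with the normalization $(u_i,v_j)=\delta_{ij}$ requires $N(\lambda_0)=1$ (no Jordan block) is a point the paper leaves implicit, and is consistent with how the lemma is actually used on the critical axis.
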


In \cite[Theorem 3.10]{Cli}, Climenhaga gives a local product construction of the equilibrium state. For this, let's first notice that Climenhaga's construction is still valid when swapping the unstable and stable foliations.

We can thus define a system of stable leaf measures $\{m^s_x\mid x\in \mathcal M\}$ which is compatible and supported in $\mathcal W^s(x)$. Moreover, $m^s_x$ is defined using formula \eqref{eq:measure}, but using backward Bowen-balls, see \cite[Section 3.3]{Cli} for details. The set $\{m^s_x\mid x\in \mathcal M\}$ is $\varphi_t$-conformal in the sense of \eqref{eq:change of variable} and the change of variable by holonomy is the one explicited in \eqref{eq:COVpi} but with $w^+$ replaced by $w^-$. 

We can then extend the leaf measures to the weak (un)stable foliations
\begin{equation}
\label{eq:weak}
m^{ws}_{x}:=\int_{-\delta}^{\delta}m^s_{\varphi_t x}dt,\quad m^{wu}_{x}:=\int_{-\delta}^{\delta}m^u_{\varphi_t x}dt.
\end{equation}
To state the main result of this subsection, we introduce further notations,
$$ z=\Psi_q(x,y), \quad R_q^u(z)=\mathcal W^u(z,\delta)\cap R_q, \ R_q^{ws}(z)=\mathcal W^{ws}(z,\delta)\cap R_q, $$
where $R_q$ and $\Psi_q$ are defined in equation \eqref{eq:R}. Observe that
\begin{equation}
\label{eq:bracket}
z=[x,y],\quad x=[z,q], \quad y=[q,z].
\end{equation}
We adapt the product construction of Climenhaga (\cite[Theorem 3.10]{Cli}) to obtain the following local construction of the co-resonant state. In the following, we add an additional index $W$ to denote the leaf measures constructed using \eqref{eq:measure} from the Hölder continuous potential $W$. For any $q\in \mathcal M$, we define three measures on $R_q$ by putting, for a Borel set $Z\subset R_q$,
\begin{equation}
\label{eq:1}
m_1^q(Z):=\int_Z e^{w^+_{J^u+V}(z,[z,q])+w^-_{J^u}(z,[q,z])}d((\Psi_q)_*(m^u_{q,J^u+V}\times m^{ws}_{q, J^u}))(z),
\end{equation}
\begin{equation}
\label{eq:2}
m_2^q(Z):=\int_{R^{ws}_q}\int_{Z\cap R^u_q(y)} e^{w^-_{J^u}(z,y)}dm^u_{y,J^u+V}(z) dm^{ws}_{q, J^u}(y),
\end{equation}
\begin{equation}
\label{eq:3}
m_3^q(Z):=\int_{R^{u}_q}\int_{Z\cap R^{ws}_q(x)} e^{w^+_{J^u+V}(z,x)}dm^{ws}_{x,J^u}(z) dm^u_{q,V+J^u}(x).
\end{equation}

\begin{theo}[Construction of the co-resonant state]
\label{Construction}
For any $q\in \mathcal M$, the three measures $m_1^q,m_2^q$ and $m_3^q$ coincide and there is a unique non-zero and finite Borel measure $\nu$ on $\mathcal M$ such that $\forall Z\subset R_q$ one has $\nu(Z)=m_1(Z)$. This Borel measure satisfies 
\begin{equation}
\label{eq:coresonant}
\mathbf{P}^*\nu =P(V+J^u)\nu, \quad \mathrm{WF}(\nu)\subset E_s^*
\end{equation}
and is therefore a co-resonant state. In other words, $P(V+J^u)$ is a Ruelle resonance.
\end{theo}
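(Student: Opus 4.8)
The plan is to follow Climenhaga's product-construction argument \cite[Theorem 3.10]{Cli} essentially verbatim, but carrying the extra exponential weights $e^{w^+_{J^u+V}}$ and $e^{w^-_{J^u}}$ that appear because we need the co-resonant state for $\mathbf{P}^*=X+V-\mathrm{div}_{\mathrm{vol}}(X)$ rather than the equilibrium state itself. First I would establish that the three measures $m_1^q,m_2^q,m_3^q$ coincide on every Borel $Z\subset R_q$. This is a Fubini-type computation: $m_2^q$ is by definition the disintegration of a product measure along the fibers $R^u_q(y)$ (unstable slices), $m_3^q$ the disintegration along the fibers $R^{ws}_q(x)$ (weak-stable slices), and $m_1^q$ is the ``diagonal'' expression obtained by pushing forward the product $m^u_{q,J^u+V}\times m^{ws}_{q,J^u}$ under $\Psi_q$ with the combined weight. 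The key identity making them agree is the holonomy change-of-variables formula \eqref{eq:COVpi} (and its $w^-$ analogue for the stable system), applied to the weak-stable holonomy $z\mapsto[z,q]$ between $\mathcal W^u(z,\delta)$ and $\mathcal W^u(q,\delta)$ and the weak-unstable holonomy $z\mapsto[q,z]$; together with the cocycle relation \eqref{eq:cocycle} for $w^+$ (and $w^-$), the weights telescope so that the three iterated integrals are genuinely the same measure. One also needs the local product structure of $R_q$ from \eqref{eq:R}--\eqref{eq:bracket} to know the three slicings are compatible.

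Next I would glue the local measures $m_1^q$ into a global Borel measure $\nu$ on $\mathcal M$. Cover $\mathcal M$ by finitely many rectangles $R_{q_i}$; on overlaps $R_{q_i}\cap R_{q_j}$ one must check $m_1^{q_i}=m_1^{q_j}$, which again reduces to the conformality \eqref{eq:change of variable} and holonomy \eqref{eq:COVpi} properties of the leaf measures together with the cocycle relation — the weights depend on the base point $q$ only through a coboundary that cancels when one compares $w^+_{J^u+V}(z,[z,q_i])$ with $w^+_{J^u+V}(z,[z,q_j])$ via $w^+_{J^u+V}([z,q_i],[z,q_j])$. Finiteness and non-vanishing of $\nu$ follow because each $m^u$, $m^{ws}$ is a nonzero finite Borel measure on a compact leaf-piece and the weights are bounded on the compact set $R_q$. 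A partition-of-unity argument then produces the global $\nu$ and shows it is independent of the chosen cover.

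The remaining, and in my view the main, task is to verify the two analytic statements: $\mathbf{P}^*\nu=P(V+J^u)\nu$ and $\mathrm{WF}(\nu)\subset E_s^*$. For the eigenvalue equation I would test against $f\in C^\infty(\mathcal M)$ and compute $\frac{d}{dt}\big|_{t=0}\big((\varphi_{-t})_*\nu\big)(f)$ using the change-of-variables formula: pushing $R_q$ by $\varphi_{-t}$ and using the $\varphi_t$-conformality \eqref{eq:RN} of the unstable system for the potential $V+J^u$ together with the invariance of $m^{ws}$ under the flow direction (the weak-stable measure \eqref{eq:weak} is built precisely to absorb the flow translation), the weight picks up exactly a factor $e^{t(S_t(V+J^u)-tP(V+J^u))}$-type term whose $t$-derivative at $0$ is $V+J^u-P(V+J^u)$; combined with the $\mathrm{div}_{\mathrm{vol}}(X)$ term coming from comparing the Riemannian volume used in the $L^2$-pairing with the leaf measures — here the unstable Jacobian $J^u$ from \eqref{eq:Ju} is exactly what converts $-X+V$ acting on densities into $X+V-\mathrm{div}_{\mathrm{vol}}(X)$ on $\nu$ — one gets the claimed identity. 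For the wavefront bound, since $\nu$ is by construction smooth (indeed locally a continuous density times Lebesgue) along the weak-stable direction $\mathcal W^{ws}$ and only as irregular as the leaf measures transversally, $\nu$ is conormal with respect to the weak-stable foliation, so $\mathrm{WF}(\nu)\subset N^*(\mathcal W^{ws})=E_s^*$ by \eqref{eq:dual}. The delicate point throughout is bookkeeping the two different potentials ($V+J^u$ on unstable pieces, $J^u$ on weak-stable pieces) and making sure every coboundary produced by changing base point or applying a holonomy genuinely cancels; this is where I expect the real work to lie, and it is the reason the three expressions \eqref{eq:1}--\eqref{eq:3} are introduced rather than a single formula.
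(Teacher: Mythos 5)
Your outline of the measure-theoretic part — agreement of \eqref{eq:1}--\eqref{eq:3} via the holonomy change of variables \eqref{eq:COVpi} and the cocycle relation \eqref{eq:cocycle}, gluing on overlaps via compatibility of the leaf systems and the local product structure, and the eigenvalue identity via $\varphi_t$-conformality \eqref{eq:RN} — matches the paper's argument. (The paper proves the integrated identity $\nu(e^{t\mathbf P}f)=e^{tP(V+J^u)}\nu(f)$ rather than differentiating at $t=0$, which avoids justifying differentiation under the integral for a measure-valued object, but this is a cosmetic difference.)

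The genuine gap is in your treatment of the wavefront set bound, which you dispatch by asserting that $\nu$ is ``by construction smooth (locally a continuous density times Lebesgue) along the weak-stable direction'' and hence conormal to $\mathcal W^{ws}$. Neither half of this is available for free. First, the weak-stable foliation is only H\"older continuous, so ``conormal to the foliation'' is not a well-defined microlocal notion and cannot be invoked to conclude $\mathrm{WF}(\nu)\subset E_s^*$; one must instead fix $(q,\xi)$ with $\xi\notin E_s^*$, write $\nu_q(\chi e^{iS/h})$ using \eqref{eq:3} as an outer integral over $dm^u_{q,V+J^u}$ of inner oscillatory integrals over the individual (smooth) leaves $R_q^{ws}(x)$, and integrate by parts leaf by leaf with bounds uniform and continuous in the transversal parameter $x$. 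Second, and more seriously, the inner measures $m^{ws}_{x,J^u}$ are Carath\'eodory-type measures built from Bowen balls; that they admit densities which are \emph{smooth along each leaf} is not part of the construction but a theorem: the paper identifies $e^{w^+_{J^u}(z,x)}dm^{ws}_{x,J^u}$ with the conditional measures of the SRB measure via \cite[Corollary 3.11]{Cli} and then invokes de la Llave's regularity results \cite[Theorem 3.9]{Lla} for leafwise smoothness of those conditionals, together with \cite[Corollary 4.4]{Lla} to show the holonomy weight $w^+_V(z,x)$ from \eqref{eq:w+} is leafwise smooth (this last point is delicate precisely because $J^u$ and the weights are only H\"older in $x$). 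Without these inputs the integration by parts cannot be performed, so your proposal as written does not establish $\mathrm{WF}(\nu)\subset E_s^*$, which is exactly the hypothesis needed to conclude via \eqref{eq:resonant} that $P(V+J^u)$ is a Ruelle resonance.
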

\begin{rmk}
Climenhaga's original construction is obtained by taking the two potentials to be equal, in other words, all leaf measures are constructed using $V$ and the resulting Borel measure is a non-zero multiple of the equilibrium state $\mu_V$. We observe that when $V=0$, both constructions coincide and we retrieve the known fact  that the SRB measure is the co-resonant state in this case. Of course, by taking the adjoint, we obtain a construction of a resonant state $\eta$.
\end{rmk}
\begin{proof}
We mostly follow the strategy of the proof of \cite[Theorem 10]{Cli}, while only changing the necessary details. We first prove that the three formulas indeed agree and define the same local measure. Starting from a measurable function $\zeta: \mathcal W^u(q,\delta)\times \mathcal W^{ws}(q,\delta)\to ]0,+\infty[$, one sees that \begin{align*}
\int_Z \zeta(\Psi^{-1}_q(z))&d((\Psi_q)_*(m^u_{q,V+J^u}\times m^{ws}_{q, J^u}))(z)
\\&=\int_{\Psi_q^{-1}(Z)}\zeta(x,y) d((m^u_{q,V+J^u}\times m^{ws}_{q, J^u}))(x,y).
\end{align*}
Now, this last expression can be seen to be equal to \eqref{eq:2}, using the change of variable by holonomy \eqref{eq:COVpi} with the holonomy $\pi:\mathcal W^u(q,\delta)\to R^u_q(y), \ \pi(x)=[x,y]$ and with $\zeta(z):=e^{w^+_{V+J^u}(z,[z,q])+w^-_{J^u}(z,[q,z])}$.
\begin{align*}
\int_{R^u_q}\zeta([x,y])\mathbf{1}([x,y])d m^u_{q,V+J^u}(x)=\int_{R^u_q(y)\cap Z}&\zeta(z)d(\pi_*m^u_{q,V+J^u})(z)
\\=\int_{R^u_q(y)\cap Z}\zeta(z)e^{w^+_{V+J^u}([z,q],z)}dm^u_{y,V+J^u}(z)=&\int_{R^u_q(y)\cap Z}e^{w^-_{J^u}(z,[q,z])}dm^u_{y,V+J^u}(z),
\end{align*}
where we used the cocycle relation \eqref{eq:cocycle}. Similarly, we obtain the equivalent formula \eqref{eq:3}. Next, we show that the set of local measures $\{\nu_q\mid q\in \mathcal M\}$ are compatible in the sense that for any Borel set $  Z\subset R_q\cap R_p$, one has $ \nu_q(Z)=\nu_p(Z).$
This will then define a global Borel measure. If $p\in \mathcal W^{ws}(q,\delta)$, then we deduce the above relation follows from \eqref{eq:2} and the fact that the system measures $\{m^s_x\mid x\in \mathcal M\}$ is compatible on the intersection. Similarly, the relation for $p\in \mathcal W^{u}(q,\delta)$ follows from the compatibility of $\{m^{ws}_x\mid x\in \mathcal M\}$ and \eqref{eq:3}. The general case then follows from the local product structure. Call $\nu$ the global Borel measure we obtain. The fact that $\nu$ is non-zero and finite follows from the analog fact on the local measures $\nu_q$ and the compactness of the space $\mathcal M$.
What is left to prove is \eqref{eq:coresonant}. It suffices to show that one has
\begin{equation}
\label{eq:Rela}
\forall f\in C^{\infty}(\mathcal M), \ \langle  e^{t\mathbf{P}}f,\nu\rangle=e^{tP(V+J^u)}\langle f, \nu\rangle,
\end{equation}
which will clearly imply the first part of \eqref{eq:coresonant}. Because the measure $\nu$ is constructed from its restrictions , it will be enough to prove that
\begin{equation}
\label{eq:PPP}
\forall f\in C^{\infty}(\mathcal M), \ \mathrm{Supp}(f)\cup \mathrm{Supp}(e^{t\mathbf{P}}f) \subset R_q \ \Rightarrow \ \nu(e^{t\mathbf{P}}f)=e^{tP(V+J^u)}\nu(f).
\end{equation}
We now use formula \eqref{eq:3} to compute explicitly, using the $\varphi_t$-conformality  \eqref{eq:change of variable}
\begin{align*}
\nu^{V+J^u}(e^{t\mathbf{P}}f)&=\int_{R^{ws}_q}\int_{Z\cap R^{u}_q(y)}e^{S_tV(\varphi_{-t}z)}f(\varphi_{-t}z) e^{w^-_{J^u}(z,y)}dm^{u}_{x, V+J^u}(z) dm^{ws}_{q,J^u}(y).
\end{align*}
The cocycle relation \eqref{eq:cocycle} gives
$e^{w^-_{J^u}(z,y)}=e^{w^-_{J^u}(z,\varphi_{-t}z)}e^{w^-_{J^u}(\varphi_{-t}z,y)}. $
Using \eqref{eq:floww} to get $w^+_{J^u}(z,\varphi_{-t}z)=S_{t}J^u(\varphi_{-t}z)$, this means that 
\begin{align*}\int_{Z\cap R^{u}_q(y)}&e^{S_tV(\varphi_{-t}z)}f(\varphi_{-t}z) e^{w^-_{J^u}(z,y)}dm^{u}_{x, V+J^u}(z) 
\\ =& \int_{Z\cap R^{u}_q(y)}e^{S_t(V+J^u)(\varphi_{-t}z)}f(\varphi_{-t}z) e^{w^-_{J^u}(\varphi_{-t}z,y)}dm^{u}_{x, V+J^u}(z) 
\\ =& \int_{Z\cap R^{u}_q(y)}e^{S_t(V+J^u)(z)}f(z) e^{w^-_{J^u}(z,y)}d\big((\varphi_{-t})_*m^{u}_{x, V+J^u}\big)(z)
\\ =& \int_{Z\cap R^{u}_q(y)}e^{tP(V+J^u)-S_t(V+J^u)(z)}e^{S_t(V+J^u)(z)}f(z) e^{w^-_{J^u}(z,y)}dm^{u}_{\varphi_{-t}x, V+J^u}(z)
\\=& e^{tP(V+J^u)} \int_{Z\cap R^{u}_q(y)}f(z) e^{w^-_{J^u}(z,y)}dm^{u}_{\varphi_{-t}x, V+J^u}(z),
\end{align*}
where we used the $\varphi_t$-conformality, see \eqref{eq:RN}. This implies \eqref{eq:Rela} and thus
$$\mathbf{P}^*\nu =P(V+J^u)\nu. $$
The last thing we need to do is to bound the wavefront set of the measure $\nu$. For this, we will need some regularity result on the conditional measures of the SRB measures and the following adaptation of \cite[Lemma 2.9]{GBGHW}:
\begin{lemm}
\label{Wavefront set}
The wavefront set of $\nu$ is included in $E_s^*$.
\end{lemm}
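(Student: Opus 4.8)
The goal is to show $\mathrm{WF}(\nu)\subset E_s^*$, where $\nu$ is the co-resonant state constructed in Theorem \ref{Construction}. The basic strategy, following \cite[Lemma 2.9]{GBGHW}, is to use the invariance relation $\mathbf{P}^*\nu = P(V+J^u)\nu$ together with the propagation of singularities for the (backward) transport equation. Since $\nu$ solves $(X+V-\mathrm{div}_{\mathrm{vol}}(X)-P(V+J^u))\nu = 0$, its wavefront set is invariant under the lifted flow $\Phi_t:=(d\varphi_{-t})^T$ on $T^*\mathcal M\setminus 0$: that is, $\mathrm{WF}(\nu)$ is a closed conic flow-invariant subset of the characteristic variety of $X$, which is $\{(x,\xi)\mid \xi(X(x))=0\}=E_u^*\oplus E_s^*$ (the annihilator of $\mathbb{R}X$). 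So a priori $\mathrm{WF}(\nu)\subset E_u^*\oplus E_s^*$, and the content of the lemma is to rule out any component meeting $E_u^*$ (and the "mixed" directions away from $E_s^*$).

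First I would record that, by the dynamics of $\Phi_t$ on the cotangent bundle, any closed conic flow-invariant subset of $E_u^*\oplus E_s^*$ that is disjoint from a conic neighborhood of $E_u^*$ must actually be contained in $E_s^*$: under $\Phi_t$ as $t\to+\infty$ the $E_s^*$ component expands and the $E_u^*$ component contracts, so the only way to stay in a compact piece of the cosphere bundle is to converge to $E_s^*$; conversely under $\Phi_{-t}$ everything not in $E_s^*$ escapes towards $E_u^*$. Hence it suffices to prove the single statement $\mathrm{WF}(\nu)\cap E_u^* = \emptyset$ — equivalently, $\nu$ is microlocally smooth in a conic neighborhood of $E_u^*$. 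Here is where the explicit product structure of $\nu$ enters: by formula \eqref{eq:2} (or \eqref{eq:1}), locally on a rectangle $R_q$, $\nu$ is a "fibered" measure whose conditionals along the unstable leaves $\mathcal W^u(\cdot,\delta)$ are the Climenhaga leaf measures $m^u_{\cdot, V+J^u}$ (times smooth holonomy weights $e^{w^-_{J^u}(\cdot,\cdot)}$, which are Hölder but not worse than the leaf measures themselves), integrated against $m^{ws}_{q,J^u}$ in the transverse (weak-stable) directions. In a chart adapted to the foliation $\mathcal W^u$, the directions in $E_u^*$ are exactly the \emph{conormal} directions to the unstable leaves — i.e. the directions transverse to the leaves — and a fibered measure of this type is microlocally smooth in the transverse (leaf-conormal) directions precisely because it is a genuine measure (no derivatives of $\delta$) in the transverse variable and, crucially, because the leaf measures depend continuously (even Hölder-continuously) on the base point, which controls the transverse regularity. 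I would make this precise by testing $\nu$ against oscillatory functions $e^{-i\lambda\langle x,\xi\rangle}a(x)$ with $\xi$ in a cone around $E_u^*$, using Fubini to reduce to: (a) a leaf integral $\int e^{-i\lambda(\cdots)}\,dm^u_{y}$ which, since $\xi$ is conormal to the leaf, has no oscillation in the leaf variable and is just $O(1)$ (the total mass), and (b) a transverse integral over $y$ with a smooth density, which is rapidly decaying in $\lambda$ by non-stationary phase.

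The main obstacle will be step (b) done carefully: one must check that, after the reduction, the effective phase in the transverse direction is genuinely non-degenerate (its differential is a nonzero multiple of the $E_u^*$-component of $\xi$, uniformly on the cone), and that the amplitude — which involves the Hölder-continuous holonomy weights $e^{w^-_{J^u}}$ and the measure-valued map $y\mapsto m^u_{y,V+J^u}$ — can be integrated by parts enough times. This is where \cite[Lemma 2.9]{GBGHW} must be adapted rather than quoted verbatim: there the relevant conditional measures are the SRB conditionals, and one uses that the unstable conditionals of the SRB/equilibrium measure, while only Hölder in the base point, still allow the non-stationary phase estimate because the oscillation is purely transverse. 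The same mechanism works here once we observe that $m^u_{\cdot,V+J^u}$ and the weights have the same qualitative regularity (continuous dependence on the base, finite on compact leaves) as in the SRB case — this is exactly what Climenhaga's change-of-variable formulas \eqref{eq:COVpi} and \eqref{eq:change of variable} provide. Alternatively, and perhaps more cleanly, one can entirely bypass the explicit product formula and argue by a bootstrap/radial-source estimate: $\nu$ is annihilated by $\mathbf{P}^*-P(V+J^u)$, and $E_s^*$ is a radial source for the $\Phi_t$-action (equivalently $E_u^*$ is a radial sink for $\Phi_{-t}$); the standard Faure--Sjöstrand / Dyatlov--Zworski radial-estimate argument (as in \cite{Fau08,DyaZw}) then forces $\mathrm{WF}(\nu)\subset E_s^*$ provided one knows a priori that $\nu$ lies in some anisotropic space $\mathcal{H}^{-s}$ with $s$ large enough — and the fact that $\nu$ is a finite measure, hence in every $H^{-n/2-\epsilon}$, gives exactly such an a priori bound. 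I would present the radial-estimate argument as the main line and the product-structure computation as the conceptual explanation of why it is $E_s^*$ and not $E_u^*$ that survives.

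\begin{proof}[Proof sketch]
Since $\nu$ is a distribution solving the transport equation $(X+V-\mathrm{div}_{\mathrm{vol}}(X)-P(V+J^u))\nu=0$ and $\nu$ is a finite measure (hence $\nu\in H^{-n/2-\epsilon}(\mathcal M)$ for every $\epsilon>0$), propagation of singularities shows that $\mathrm{WF}(\nu)$ is a closed conic $\Phi_t$-invariant subset of the characteristic set $\{\xi(X)=0\}=E_u^*\oplus E_s^*$. The flow $\Phi_t=(d\varphi_{-t})^T$ on the cosphere bundle over $E_u^*\oplus E_s^*$ contracts $E_s^*$ as $t\to-\infty$ and contracts $E_u^*$ as $t\to+\infty$; in particular $E_s^*$ is a radial source and $E_u^*$ a radial sink for the appropriate time direction. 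Applying the radial source/sink estimates of Faure--Sjöstrand \cite{Fau08} (in the form used by Dyatlov--Zworski \cite{DyaZw}) to the solution $\nu$, together with the a priori regularity $\nu\in H^{-n/2-\epsilon}$, yields that $\nu$ is microlocally $C^\infty$ in a conic neighborhood of $E_u^*$ and, more generally, on $(E_u^*\oplus E_s^*)\setminus E_s^*$: any flow-invariant piece of the characteristic set not contained in $E_s^*$ escapes toward $E_u^*$ under $\Phi_{-t}$, where the source estimate propagates the smoothness. Hence $\mathrm{WF}(\nu)\subset E_s^*$. Finally, this is consistent with — and can be seen directly from — the product formula \eqref{eq:2}: locally $\nu$ is a measure fibered over the unstable foliation with conditionals $m^u_{\cdot,V+J^u}$ and smooth transverse density, so testing against oscillatory functions with frequency conormal to the unstable leaves produces no stationary phase in the leaf variable and rapid decay in the transverse variable, confirming $\mathrm{WF}(\nu)\cap E_u^*=\emptyset$.
\end{proof}
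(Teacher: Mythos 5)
Both routes you propose contain a genuine gap, and the gap is the same in spirit: you never use the one piece of non-trivial regularity that the proof actually requires. Your main line (radial estimates) cannot work as stated. The radial source estimate at $E_u^*$ for the adjoint problem has a \emph{positive} regularity threshold: to conclude microlocal smoothness at the source one must know a priori that $\nu\in H^{r}$ microlocally near $E_u^*$ for some $r$ above a threshold depending on $\mathrm{Re}(\lambda)$, the potential and the expansion rates. A finite measure only gives $H^{-n/2-\epsilon}$, which is below any such threshold; equivalently, a finite measure lies in $H^{-n/2-\epsilon}$ but \emph{not} automatically in $\mathcal H^{-s}$, since $\mathcal H^{-s}$ demands $H^{+s}$ regularity microlocally near $E_u^*$ --- which is precisely the statement to be proven. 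The claim is in fact false as stated: take $V=0$ and a volume-preserving Anosov flow, so that the co-resonance equation at the first resonance reads $X\mu=0$; the invariant delta measure on a closed orbit is a finite measure solving it, yet its wavefront set is the full conormal bundle of the orbit, which meets $E_u^*$. So the transport equation plus finite-measure regularity cannot yield $\mathrm{WF}(\nu)\subset E_s^*$; the explicit structure of $\nu$ must enter.

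Your alternative (product-structure) route is the right idea but is missing its key ingredient. Microlocal smoothness in directions conormal to the unstable leaves does \emph{not} follow from $\nu$ being ``a genuine measure in the transverse variable'' depending H\"older-continuously on the base point: a transverse point mass is a measure with H\"older (indeed constant) dependence and has no Fourier decay whatsoever. What the paper actually does is use the disintegration \eqref{eq:3} --- outer integral over the unstable leaf against $m^u_{q,V+J^u}$ (merely bounded, no oscillation needed there), inner integral over the weak-stable pieces $R_q^{ws}(x)$ --- and then invokes two non-trivial facts: (i) by \cite[Corollary 3.11]{Cli} the inner measure $e^{w^+_{J^u}(z,x)}dm^{ws}_{x,J^u}(z)$ is, up to normalization, the conditional of the SRB measure on the weak-stable leaf, whose density is \emph{smooth along each leaf} by de la Llave's regularity theorem \cite[Theorem 3.9]{Lla}; and (ii) the remaining weight $e^{w^+_{V}(z,x)}$ is smooth along the leaves because $V$ is smooth (with a separate argument needed for the leafwise smoothness of $w^+_{J^u}$, since $J^u$ is only H\"older in $x$). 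Only then can one integrate by parts along the weak-stable leaf, where $d S$ is non-vanishing precisely because $\xi\notin E_s^*$ means $\xi$ does not annihilate $E_s\oplus\mathbb R X$. Note also that this argument handles all $\xi\notin E_s^*$ directly, so the flow-invariance reduction to $\xi\in E_u^*$ is unnecessary. Without the leafwise smoothness input (i)--(ii), your oscillatory integral estimate stalls at an $O(1)$ bound and gives no wavefront information.
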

\begin{proof}
 Consider $q\in \mathcal M$ and  $\xi \notin E_s^*(q)$ and we shall prove that $(q,\xi)$ is not in the wavefront set. Choose a phase function $S$ such that $d_qS=\xi$ and a cutoff $\chi$ near $q$, we then use \eqref{eq:3} to obtain:
$$\nu^{V+J^u}_q(\chi e^{i\frac Sh}):=\int_{R^{u}_q}\int_{Z\cap R^{ws}_q(x)}\!\!\!\!\!\!\!\!\!\!\!\!\chi(z)e^{i\frac Sh} e^{w^+_{V}(z,x)+t(P(V+J^u)-P(V))}e^{w^+_{J^u}(z,x)}dm^{ws}_{x,J^u}(z) dm^u_{q,V+J^u}(x).$$
Now, we can use \cite[Corollary 3.11]{Cli} to obtain that 
$$\frac{d\mu_x^{ws}}{d m^{ws}_{x,J^u}}(z)=\frac{e^{w^+_{J^u}(z,x)}}{m^{ws}_x(R_q^{ws}(x))}, \quad h(x):= m^{ws}_x(R_q^{ws}(x))$$
where $\mu_x^{ws}$ is the conditional measure of the SRB measure on the weak unstable manifold, in other words:
$$\nu^{V+J^u}_q(\chi e^{i\frac Sh}):=\int_{R^{u}_q}h(x)\left(\int_{Z\cap R^{ws}_q(x)}\!\!\!\!\!\!\chi(z)e^{i\frac Sh} e^{w^+_{V}(z,x)+t(P(V+J^u)-P(V))}\mu_x^{ws}(z)\right) dm^u_{q,V+J^u}(x).$$

Now, we use \cite[Theorem 3.9]{Lla} to obtain that the density $\mu_x^{ws}$ is smooth along the leaves $R_q^{ws}(x)$. By this, we mean that 
$$\|\mu_x^{ws}\|_{C^k(R_q^{ws}(x))}:=\sup_{z\in R_q^{ws}(x)}\sup_{X_1,\ldots, X_k \in S_z R_q^{ws}(x)}|X_1\ldots X_k(\mu_x^{ws}(z)_{|R_q^{ws}(x)})| $$
is finite for any $k$ and that the norm depends continuously in $x$.

From the proof of \cite[Corollary 4.4]{Lla}, one sees that the smoothness of the potential $V$ implies that $w_V^+(z,x)$ is smooth along the leaves $R_q^{ws}(x)$. More precisely, the proof shows that given a function which is smooth along the leaves, the function defined by the last integral in \eqref{eq:w+} is smooth along the leaves therefore proving that the holonomy factor is smooth along the leaves.\footnote{Note that the fact that the unstable Jacobian is smooth along the leaves in the sense above is non trivial because it is only Hölder continuous in $x$.}

Using the fact that each leaf is a smooth submanifold, we can perform integration by parts (in $z$) to show that the inner integral is $O(h^{\infty})$ as long as $dS_{|R_w^{ws}}(x)$ does not vanish. But $\xi\notin E_s^*$ so this can be ensured locally near $p$. This proves that $\xi \notin E_s^*$ and we have thus showed that $\mathrm{WF}(\nu)\subset E_s^*$.
\end{proof}
We can now finish the proof of Theorem \ref{Construction}. For this, we use characterization \eqref{eq:resonant} to deduce that $\nu$ is indeed a co-resonant state, i.e $P(V+J^u)$ is a Ruelle resonance.
\end{proof}
\subsection{Critical axis}
In this subsection, we prove that the set of Ruelle resonances is contained in the half plane $\{\lambda \mid \mathrm{Re}(\lambda)\leq P(V+J^u)\}$. The case $V=0$ is enlightening; in this case, the critical axis is the imaginary axis. No resonance has positive real part, indeed, in the case of a volume preserving Anosov flow, this follows directly from Theorem \ref{Resolvent+V} while in the general case, it follows from the general bound $\|e^{-Xt}f\|_{L^{\infty}}\leq \|f\|_{L^{\infty}}$, see \cite[Corr. 4.16]{GBGHW}. We insist on the fact that the functional space $L^{\infty}$ is not really suited for microlocal analysis and thus  this part of the proof needs the introduction of some "finer" measure theory. The goal of this subsection is thus to construct, using the co-resonant state $\nu$ defined in Theorem \ref{Construction}, a norm $\|.\|_V$ such that
\begin{equation}
\label{eq:lips}
\forall f\in C^{\infty}(\mathcal M), \quad \|e^{t\mathbf{P}}f\|_V\leq e^{tP(V+J^u)}\|f\|_{V}.
\end{equation}
\begin{lemm}
\label{new}
Let $\nu$ be as in Theorem \ref{Construction}, for $f\in C^{\infty}(\mathcal M)$, define
$$\|f\|_V:=\|f\|_{L^1(\mathcal M, \nu)}=\int_{\mathcal M}|f|(z)d\nu(z). $$
Then this defines a norm on $C^{\infty}(\mathcal M)$ such that
$$ \forall f\in C^{\infty}(\mathcal M), \quad \|e^{t\mathbf{P}}f\|_V\leq e^{tP(V+J^u)}\|f\|_{V}.$$
We will denote by $L^1(\mathcal M,\nu)$ the completion for $\|.\|_V$ of $C^{\infty}(\mathcal M)$. 
\end{lemm}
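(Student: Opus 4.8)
The plan is to reduce the statement to the identity \eqref{eq:Rela} obtained while proving Theorem~\ref{Construction}, using that the transfer weight is positive (because $V$ is real-valued) together with a routine density argument. Since $V$ is real-valued, the factor $\exp(S_tV(\varphi_{-t}x))$ appearing in the formula for $e^{t\mathbf{P}}$ is strictly positive, so for every $f\in C^{\infty}(\mathcal M)$ one has the pointwise identity
$$|e^{t\mathbf{P}}f|(x)=\exp\big(S_tV(\varphi_{-t}x)\big)\,|f(\varphi_{-t}x)|=\big(e^{t\mathbf{P}}|f|\big)(x),$$
the right-hand side being $e^{t\mathbf{P}}$ applied to the merely continuous function $|f|$. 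For fixed $t$ the operator $e^{t\mathbf{P}}$ is bounded on $C^{0}(\mathcal M)$, with norm at most $\sup_x\exp(S_tV(\varphi_{-t}x))$, and since $\nu$ is a finite Borel measure both sides of \eqref{eq:Rela} depend continuously on $f$ for the uniform norm; approximating an arbitrary $g\in C^{0}(\mathcal M)$ uniformly by smooth functions therefore extends \eqref{eq:Rela} to all continuous $g$, in particular to $g=|f|$. Hence
$$\|e^{t\mathbf{P}}f\|_V=\int_{\mathcal M}\big(e^{t\mathbf{P}}|f|\big)\,d\nu=e^{tP(V+J^u)}\int_{\mathcal M}|f|\,d\nu=e^{tP(V+J^u)}\|f\|_V,$$
which is in fact an equality, stronger than the asserted inequality.

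It remains to verify that $\|\cdot\|_V$ is a norm on $C^{\infty}(\mathcal M)$; the only point that is not immediate is definiteness, i.e.\ that $\nu$ has full support. This can be read off the local construction \eqref{eq:1}: through the chart $\Psi_q$ the restriction $\nu|_{R_q}$ is equivalent, with strictly positive (exponential) density, to the product $m^u_{q,V+J^u}\times m^{ws}_{q,J^u}$. The leaf measures $m^u_{\cdot,V+J^u}$ and $m^s_{\cdot,J^u}$ (hence $m^{ws}_{\cdot,J^u}$ via \eqref{eq:weak}) charge every relatively open subset of the corresponding leaf, which is the leaf-measure analogue of the fact that $\mu_{V+J^u}$ and $\mu_{J^u}$ have full support; indeed full support of those equilibrium states forces full support of the leaf measures, since the equilibrium states are themselves locally products of the same leaf measures with positive densities. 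Consequently $\nu(\mathcal O)>0$ for every non-empty open $\mathcal O$ meeting $R_q$, and since finitely many such rectangles cover the compact manifold $\mathcal M$, the measure $\nu$ has full support. Therefore $\|f\|_V=0$ with $f$ continuous forces $f\equiv 0$, so $\|\cdot\|_V$ is a norm, and $L^1(\mathcal M,\nu)$ is by definition its completion.

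There is essentially no serious obstacle here. The only two points deserving a word of justification are the passage of \eqref{eq:Rela} from $C^{\infty}$ to $C^{0}$, which is a soft approximation argument using nothing more than finiteness of $\nu$ and boundedness of $e^{t\mathbf{P}}$ on $C^{0}$, and the full-support property of $\nu$, which is extracted from Climenhaga's construction exactly as one does for equilibrium states.
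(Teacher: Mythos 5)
Your proposal is correct and follows essentially the same route as the paper: the contraction property is deduced from the identity \eqref{eq:Rela} established in the proof of Theorem \ref{Construction}, and definiteness of the norm comes from the local product formula \eqref{eq:1} together with the fact that the leaf measures charge every non-empty open subset of their leaf. You simply spell out the two steps the paper leaves implicit (that $|e^{t\mathbf{P}}f|=e^{t\mathbf{P}}|f|$ since $V$ is real, and the uniform-density extension of \eqref{eq:Rela} to continuous functions), which is a welcome clarification rather than a deviation.
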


\begin{proof}
The fact that $\nu(|f|)\geq 0$ is a consequence of $\nu$ being a measure (i.e of $\nu$ being a distribution which is non-negative on non-negative functions). The homogeneity and the triangle inequality are clear. Suppose now $\|f\|_V=0$, then use the formula \eqref{eq:1} as well as \eqref{eq:measure} to see that the co-resonant state $\nu$ gives positive measure to any non-empty open set of $\mathcal M$. Thus, if $f$ is non-zero, then $|f|$ is positive on a small open set which is a contradiction. The change of variable formula \eqref{eq:lips} is a direct consequence of \eqref{eq:Rela} which was shown in the proof of Theorem \ref{Construction}.
\end{proof}
As a direct consequence, we adapt \cite[Corr. 4.16]{GBGHW} to show that there is no Ruelle resonance  in the half plane $\{\mathrm{Re}(z)> P(V+J^u)\}$.
\begin{lemm}
\label{negative}
The set of Ruelle resonances for the potential $V$ is contained in the half plane $\{\mathrm{Re}(\lambda)\leq P(V+J^u)\}$.
\end{lemm}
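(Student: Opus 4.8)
The plan is to deduce Lemma \ref{negative} directly from the norm estimate in Lemma \ref{new}, mimicking the argument of \cite[Corr. 4.16]{GBGHW}. The point is that the meromorphic family $R_+(\lambda) = (\mathbf{P}-\lambda)^{-1}$ furnished by Theorem \ref{Resolvent+V} is, in the half-plane $\{\operatorname{Re}(\lambda) > C_0\}$, given by the convergent integral $\int_0^{+\infty} e^{t(\mathbf{P}-\lambda)} f\, dt$, and a Ruelle resonance in $\{\operatorname{Re}(\lambda) > P(V+J^u)\}$ would have to be a genuine pole of this extension. So it suffices to show that for $\operatorname{Re}(\lambda) > P(V+J^u)$ the resolvent extends holomorphically, and this we get by controlling it in the $\|\cdot\|_V$-norm.

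First I would observe that, by Lemma \ref{new}, for $f \in C^\infty(\mathcal M)$ and $\operatorname{Re}(\lambda) = \sigma > P(V+J^u)$ we have
\begin{equation*}
\int_0^{+\infty} \big\| e^{t(\mathbf{P}-\lambda)} f \big\|_V \, dt = \int_0^{+\infty} e^{-\sigma t} \big\| e^{t\mathbf{P}} f \big\|_V \, dt \le \|f\|_V \int_0^{+\infty} e^{-(\sigma - P(V+J^u)) t}\, dt = \frac{\|f\|_V}{\sigma - P(V+J^u)} < \infty,
\end{equation*}
so the integral $\int_0^{+\infty} e^{t(\mathbf{P}-\lambda)} f\, dt$ converges absolutely in $L^1(\mathcal M,\nu)$ and depends holomorphically on $\lambda$ in $\{\operatorname{Re}(\lambda) > P(V+J^u)\}$, defining a bounded operator there, say $R_V(\lambda): C^\infty(\mathcal M) \to L^1(\mathcal M,\nu)$. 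Then I would compare $R_V(\lambda)$ with the resolvent $R_+(\lambda)$ of Theorem \ref{Resolvent+V}: on the common half-plane $\{\operatorname{Re}(\lambda) > \max(C_0, P(V+J^u))\}$ both are given by the same integral formula \eqref{eq:resolventeintegrale}, hence agree as maps $C^\infty(\mathcal M) \to \mathcal D'(\mathcal M)$ (the inclusion $\mathcal H^s \hookrightarrow \mathcal D'$ and $L^1(\mathcal M,\nu) \hookrightarrow \mathcal D'$ being compatible with the integral). Since $R_+(\lambda)$, viewed in $\mathcal D'(\mathcal M)$, has a meromorphic extension to $\mathbb C$ whose poles are the Ruelle resonances, and since on $\{\operatorname{Re}(\lambda) > P(V+J^u)\}$ this extension coincides with the holomorphic family $R_V(\lambda)$, there can be no pole there; that is, $\operatorname{Res} \cap \{\operatorname{Re}(\lambda) > P(V+J^u)\} = \emptyset$, which is the claim.

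The main obstacle is the bookkeeping of function spaces: one must check that the $L^1(\mathcal M,\nu)$-valued integral and the $\mathcal H^s$-valued (resp. $\mathcal D'$-valued) resolvent formula of Theorem \ref{Resolvent+V} really do define the same distribution when tested against $C^\infty$ functions, i.e. that the pairing $\langle R_V(\lambda) f, g\rangle = \int_0^{+\infty} \langle e^{t(\mathbf{P}-\lambda)} f, g\rangle\, dt$ is legitimate for $g \in C^\infty(\mathcal M)$ and coincides with the corresponding pairing for $R_+(\lambda)$. This is where one uses that $\nu$ is a genuine finite Borel measure (so $|\langle e^{t(\mathbf{P}-\lambda)} f, g\rangle| \le \|g\|_{L^\infty} \| e^{t(\mathbf{P}-\lambda)} f\|_V$) together with the absolute convergence established above, and Fubini to interchange the pairing with the integral; the identity of the two families then follows because a meromorphic $\mathcal D'$-valued function that is holomorphic on an open subset has no poles there. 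Everything else is a routine adaptation of \cite[Corr. 4.16]{GBGHW}.
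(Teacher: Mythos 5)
Your route is genuinely different from the paper's. The paper does not continue the integral \eqref{eq:resolventeintegrale} directly; it uses the parametrix identity \eqref{eq:parametrix} and shows that the remainder $\tilde R(\lambda)=-\int_T^{T+\epsilon}\chi'(t)e^{-t\lambda}e^{t\mathbf{P}}\,dt$ has operator norm at most $1/2$ on $L^1(\mathcal M,\nu)$ once $T$ is large and $\mathrm{Re}(\lambda)>P(V+J^u)$, so that $\mathrm{Id}-\tilde R(\lambda)$ is invertible and the spectral projector \eqref{eq:proj} vanishes. Both arguments rest on the same estimate \eqref{eq:lips}, and your computation showing that $\int_0^{+\infty}\|e^{t(\mathbf{P}-\lambda)}f\|_V\,dt<\infty$ for $\mathrm{Re}(\lambda)>P(V+J^u)$ is correct.

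The gap sits precisely at the step you call bookkeeping, and it is not bookkeeping. The inequality $|\langle e^{t(\mathbf{P}-\lambda)}f,g\rangle|\leq\|g\|_{L^\infty}\|e^{t(\mathbf{P}-\lambda)}f\|_V$ holds only for the pairing $\langle u,g\rangle=\int_{\mathcal M}u\,g\,d\nu$, whereas the pairing under which $R_+(\lambda)$ is a meromorphic family $C^\infty(\mathcal M)\to\mathcal D'(\mathcal M)$ with poles at the Ruelle resonances is the $L^2(\mathrm{vol})$ pairing extended to $\mathcal H^s\times\mathcal H^{-s}$. The measure $\nu$ is built from the leaf measures and is in general mutually singular with $\mathrm{vol}$, so $\|u\|_V$ gives no control on $\int u\,g\,d\mathrm{vol}$, and the abstract completion $L^1(\mathcal M,\nu)$ does not embed into $\mathcal D'(\mathcal M)$: a sequence of smooth functions can converge in $\|\cdot\|_V$ while its distributional pairings diverge. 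Hence your $R_V(\lambda)$ is holomorphic only when tested against functionals of the form $g\,d\nu$, and holomorphy of $\lambda\mapsto\langle R_+(\lambda)f,g\nu\rangle$ does not exclude a pole of the $\mathcal D'$-valued extension: the polar part is governed by a finite-rank projector $\sum_i u_i\otimes v_i$, and nothing a priori prevents every resonant state $u_i$ (a distribution with $\WF(u_i)\subset E_u^*$) from satisfying $u_i\cdot\nu=0$, so the test family $\{g\,d\nu\}$ need not separate the range of the projector and the identity-theorem step collapses. This is exactly why the paper routes the contraction estimate through the remainder $\tilde R(\lambda)$, which maps $C^\infty$ to $C^\infty$ and whose Fredholm structure on $\mathcal H^s$ (essential spectral radius $\leq 1/2$, eigenvalues tied to nearby resonances, density of $C^\infty$ in $\mathcal H^s$) supplies the bridge between the $L^1(\mathcal M,\nu)$ bound and the anisotropic spaces; to make your direct approach work you would need an analogous bridge, e.g.\ a duality argument with co-resonant states as in Lemma \ref{convergence}, not just Fubini.
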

\begin{proof}
Recall from the proof of Theorem \ref{Resolvent+V} (see \cite[Theorem 9.11]{Lef}) that we can construct an operator $ \tilde Q(\lambda)$ such that, as an equality on operators acting on $C^{\infty}(M)$, 
\begin{equation}
\label{eq:parametrix}
(\mathbf{P}+\lambda)\tilde Q(\lambda)=\mathrm{Id}-\tilde R(\lambda), 
\end{equation}
where the remainder is given by 
$$\tilde R(\lambda)=-\int_T^{T+\epsilon}\chi'(t)e^{-t\lambda}e^{t \mathbf{P}}dt,$$
where $\chi$ is a cutoff supported in $[0,T+\epsilon[$ and constant equal to $1$ on $[0,T]$, for some suitable choices of $\epsilon$ and $T$.

Let $\lambda\in \mathbb C$ such that $\mathrm{Re}(\lambda)>P(V+J^u)$. Let us show that it is not in the Ruelle spectrum. The projector $\Pi_0(\lambda)$ on the eigenvalue $z=0$ of the Fredholm operator $F(\lambda):=\mathrm{Id}-\tilde R(\lambda)$ is given by the integral
\begin{equation} 
\label{eq:proj}
\Pi_0(\lambda)=\frac{1}{2\pi i}\int_{|z|=\epsilon}(z\mathrm{Id}-F(\lambda))^{-1}dz,
\end{equation}
for a radius $\epsilon>0$ small enough. Note that by the proof of Theorem \ref{Resolvent+V}, this is the spectral projector for any anisotropic Sobolev space $\mathcal H^s$ for which $\mathrm{Re}(\lambda)\geq -cs+C_0$ and thus \eqref{eq:proj} also holds as a map $C^{\infty}(\mathcal M)\to \mathcal D'(\mathcal M)$. But if $f\in L^1(\mathcal M,\nu)$, then, using Lemma \ref{new}, one has
\begin{align*}
\|\tilde R(\lambda)f\|_{V}&\leq \int_T^{T+\epsilon}|\chi'(t)|e^{-t\mathrm{Re}(\lambda)}\times \|e^{t\mathbf{P}}f\|_{V}dt
\\ & \leq \int_T^{T+\epsilon}|\chi'(t)|e^{-t(\mathrm{Re}(\lambda)-P(V+J^u))}\times \|f\|_{V}dt\leq \frac 1 2\|f\|_{V},
\end{align*}
if $T$ is chosen large enough. In particular, this shows that $F(\lambda)$ is invertible on $L^1(\mathcal M,\nu)$ with inverse in $\mathcal L(L^1(\mathcal M,\nu))$ and thus $\Pi_0(\lambda)=0$, meaning that $\lambda$ is not a Ruelle resonance. 
\end{proof}
\subsection{Resonances on the critical axis}
\label{section axis}
In this subsection, we investigate some properties of Ruelle resonances on the critical axis and prove the first part of Theorem \ref{mainTheo}. The main results are: that Ruelle resonances on the critical axis have no Jordan block and that the first resonance is simple.

{The following two lemmas justify that resonances on the critical axis have no Jordan block. The strategy is the same as in \cite[Lemma 5.1]{GBGHW}, where the space $L^{\infty}$ is replaced by the new functional space $L^1(\mathcal M,\nu)$.}
\begin{lemm}
\label{pole}
Let $\lambda+P(V+J^u) \in \{\mathrm{Re}(\lambda)=P(V+J^u)\}$ be on the critical axis. Then $\tilde R(\lambda)$ has spectrum included in the closed unit disk. 
\end{lemm}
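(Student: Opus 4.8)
The plan is to use the norm $\|\cdot\|_V$ from Lemma~\ref{new} exactly as in the previous subsection, but now with $\mathrm{Re}(\lambda)=P(V+J^u)$ rather than $\mathrm{Re}(\lambda)>P(V+J^u)$. Recall that $\tilde R(\lambda)=-\int_T^{T+\epsilon}\chi'(t)e^{-t\lambda}e^{t\mathbf P}\,dt$. First I would estimate its operator norm on $L^1(\mathcal M,\nu)$: for $f\in C^\infty(\mathcal M)$, using Lemma~\ref{new},
\begin{align*}
\|\tilde R(\lambda)f\|_V &\leq \int_T^{T+\epsilon}|\chi'(t)|\,e^{-t\,\mathrm{Re}(\lambda)}\,\|e^{t\mathbf P}f\|_V\,dt
\\ &\leq \int_T^{T+\epsilon}|\chi'(t)|\,e^{-t(\mathrm{Re}(\lambda)-P(V+J^u))}\,\|f\|_V\,dt
= \int_T^{T+\epsilon}|\chi'(t)|\,dt\;\|f\|_V.
\end{align*}
Since $\chi$ is monotone decreasing on $[T,T+\epsilon[$ from $1$ to $0$, we have $\int_T^{T+\epsilon}|\chi'(t)|\,dt = \chi(T)-\chi(T+\epsilon)=1$. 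Hence $\|\tilde R(\lambda)\|_{\mathcal L(L^1(\mathcal M,\nu))}\leq 1$. This bound extends to the completion $L^1(\mathcal M,\nu)$ by density.

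Next I would upgrade this to the spectral statement. A bounded operator with operator norm $\leq 1$ has spectral radius $\leq 1$, so its spectrum is contained in the closed unit disk — but this is the spectrum computed on $L^1(\mathcal M,\nu)$, whereas the statement concerns $\tilde R(\lambda)$ as the Fredholm operator entering the parametrix identity \eqref{eq:parametrix} on the anisotropic space $\mathcal H^s$. The key point, already used in the proof of Lemma~\ref{negative}, is that the spectral data of $F(\lambda)=\mathrm{Id}-\tilde R(\lambda)$ near any eigenvalue is independent of the functional space: the spectral projectors \eqref{eq:proj} agree as operators $C^\infty(\mathcal M)\to\mathcal D'(\mathcal M)$ whether computed on $\mathcal H^s$ or on $L^1(\mathcal M,\nu)$, because both are realized by the same contour integral of the resolvent acting on $C^\infty(\mathcal M)$. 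Therefore any nonzero spectral value of $\tilde R(\lambda)$ on $\mathcal H^s$ gives rise, via the corresponding generalized eigenspace (which consists of elements of $C^\infty(\mathcal M)$, being in the image of a smoothing-type projector), to a spectral value of $\tilde R(\lambda)$ acting on $L^1(\mathcal M,\nu)$, which by the above must lie in the closed unit disk. The value $0$ is of course also in the disk trivially. This gives the claim.

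The main obstacle is the transfer of spectral information between $\mathcal H^s$ and $L^1(\mathcal M,\nu)$: one must check that the eigenvalues of $\tilde R(\lambda)$ on the anisotropic space are genuinely "seen" by the $L^1(\mathcal M,\nu)$-realization, i.e. that the associated (generalized) eigenvectors embed into $L^1(\mathcal M,\nu)$ and remain eigenvectors there. This follows because $\tilde R(\lambda)$ maps distributions with wavefront set in $E_u^*$ continuously into $C^\infty(\mathcal M)$ (the integral over $[T,T+\epsilon]$ of the propagator regularizes in the $E_s^*$ direction while $e^{t\mathbf P}$ already smooths along $E_u$ for the finite time window, combined with the standard microlocal mapping properties recalled in Section~\ref{Ruelle}), so any eigenvector $u$ of $\tilde R(\lambda)$ with eigenvalue $z\neq 0$ satisfies $u=z^{-1}\tilde R(\lambda)u\in C^\infty(\mathcal M)\subset L^1(\mathcal M,\nu)$, and the eigenvalue equation persists. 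Everything else is the routine contraction estimate above together with the total-variation computation $\int|\chi'|=1$, which is the analogue of the $\tfrac12$-contraction used in Lemma~\ref{negative} but now only yielding a non-strict bound — precisely what is needed to place the spectrum in the closed (rather than open) unit disk.
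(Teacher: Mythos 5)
Your contraction estimate $\|\tilde R(\lambda)f\|_V\leq\int_T^{T+\epsilon}|\chi'|\,dt\,\|f\|_V=\|f\|_V$ is exactly the computation the paper uses, and it is correct. The proof breaks down, however, at the transfer step from $L^1(\mathcal M,\nu)$ back to $\mathcal H^s$. Your key claim --- that $\tilde R(\lambda)$ maps distributions with wavefront set in $E_u^*$ continuously into $C^{\infty}(\mathcal M)$, so that any eigenvector $u=z^{-1}\tilde R(\lambda)u$ with $z\neq 0$ is smooth --- is false. The propagator $e^{t\mathbf P}$ is pullback by a diffeomorphism times a smooth weight; it is not smoothing in any direction, and averaging over $t\in[T,T+\epsilon]$ only regularizes in the flow direction (this is why the paper's wavefront computation yields $\mathrm{WF}(\tilde R)\cap E_0^*=\emptyset$, nothing more). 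In fact Lemma \ref{convergence} shows that the eigenvectors of $\tilde R(\lambda_0)$ at eigenvalue $1$ are precisely the resonant states at $\lambda_0$, which are genuinely distributional objects (leaf measures, etc.) with wavefront set filling $E_u^*$; if your smoothing claim were true, all resonant states on the critical axis would be smooth functions, which they are not. Since these eigenvectors need not even define elements of $L^1(\mathcal M,\nu)$ (the completion of $C^\infty$ for $\nu(|\cdot|)$), the spectrum on $\mathcal H^s$ is not ``seen'' by the $L^1(\nu)$-realization in the direct way you propose. A secondary gap: even granting a transfer mechanism, you only handle point spectrum; you never show that the spectrum of $\tilde R(\lambda)$ on $\mathcal H^s$ outside the unit disk consists of eigenvalues, which requires localizing the essential spectrum.

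The paper's argument runs the transfer in the opposite direction. First it writes $\tilde R=\tilde RB+(\mathrm{Id}-B)\tilde R(\mathrm{Id}-B)+B\tilde R(\mathrm{Id}-B)$ with $B\in\Psi^0$ microsupported away from $E_0^*$, so that the middle term is compact and the rest has small $L^2$-norm for $T$ large; this places the essential spectrum in $B(0,1/2)$ and makes the spectrum outside that ball discrete. Then, instead of moving eigenvectors into $L^1(\nu)$, it observes that the Neumann series $z^{-1}\sum_k z^{-k}\tilde R(\lambda)^k f$ for \emph{smooth} $f$ converges and is analytic in $|z|>1$ thanks to your same contraction bound, and concludes by density of $C^\infty(\mathcal M)$ in $\mathcal H^s$ that the resolvent has no pole there. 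If you want to salvage your write-up, replace the smoothing claim by this Neumann-series-on-a-dense-subspace argument (or by a pairing argument against co-resonant states as in \eqref{eq:eq2}), and add the pseudodifferential decomposition to control the essential spectrum.
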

\begin{proof}
Consider $B\in \Psi^0(\mathcal M)$ such that its principal symbol is equal to $1$ except in a conic neighborhood $V$ of $E_0^*$. We can then write $\tilde R=\tilde RB+(\mathrm{Id}-B)\tilde R(\mathrm{Id}-B)+B\tilde R(\mathrm{Id}-B),$ where $(\mathrm{Id}-B)\tilde R(\mathrm{Id}-B)$ is smoothing thus compact (because $\mathrm{WF}(\tilde R)$ does not intersect $E_0^*$, see \cite[Theorem 9.11]{Lef} for a detailed computation of the wavefront set). Moreover, we have $\|\tilde RB+B\tilde R(\mathrm{Id}-B)\|_{L^2\to L^2}\leq \frac 1 2$ if $T$ is chosen large enough. This shows that the essential spectrum of $\tilde R$ is contained in $B(0,1/2)$ and that the spectrum outside this ball consists of isolated eigenvalues. Using the $L^2\to L^2$ boundedness of $\tilde R(\lambda)$, we see that for large values of $|z|$, we get the converging Neumann series (in $\mathcal H^s$)
\begin{equation}
\label{eq:Neumann}
(z-\tilde R(\lambda))^{-1}=z^{-1}\sum_{k\geq 0}z^{-k}\tilde R(\lambda)^k. 
\end{equation}
Now, we use the bound \eqref{eq:lips} for $f\in C^{\infty}(\mathcal M)$:
\begin{align*}
\|\tilde R(\lambda)f\|_{L^{1}(\mathcal M,\nu)}\leq \int_T^{T+\epsilon}e^{-tP(V+J^u)}|\chi'(t)|\times \|e^{t\mathbf{P}}f\|_{L^{1}(\mathcal M,\nu)}&dt
\\  \leq \int_T^{T+\epsilon}e^{-tP(V+J^u)}|\chi'(t)|e^{tP(V+J^u)}\|f\|_{L^{1}(\mathcal M,\nu)}dt& \leq \|f\|_{L^{1}(\mathcal M,\nu)},
\end{align*}
to get that the above Neumann series actually converges for $|z|>1$ in $\mathcal L(L^{1}(\mathcal M,\nu))$ and is analytic in $|z|>1$. By density of smooth functions in $\mathcal H^s $, this proves that the eigenvalues are contained in the closed unit disk.

  \end{proof}	
  
The next lemma, together with Remark \ref{JordanBlock}, justifies that resonances on the critical axis have no Jordan blocks.
\begin{lemm}
\label{convergence}
A complex number $\lambda_0\in P(V+J^u)+i\mathbb R$ is a Ruelle resonance, if and only if $\tilde R(\lambda_0)$ has an eigenvalue at $1$. Moreover, if $\tilde R(\lambda_0)$ has a eigenvalue on the unit circle, then it is equal to $1$ and the eigenfunctions correspond to resonant states at the resonance $\lambda_0$. Finally, write $\Pi_0(\lambda_0)$ be the spectral projector of    $\mathrm{Id}-\tilde R(\lambda_0)$ on the kernel $($see equation \eqref{eq:proj}$)$. Then we have the convergence, in $\mathcal L(\mathcal H^s)$, for any $s>0$:
\begin{equation}
\label{eq:convergence}
\Pi_0(\lambda_0)=\lim_{k\to+\infty}\tilde R(\lambda_0)^k.
\end{equation}
\end{lemm}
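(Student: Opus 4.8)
The plan is to work entirely in the framework of the Fredholm operator $F(\lambda_0) = \mathrm{Id} - \tilde R(\lambda_0)$ and to combine the spectral decomposition of $\tilde R(\lambda_0)$ outside the disk $B(0,1/2)$ (established in Lemma \ref{pole}) with the contraction estimate $\|\tilde R(\lambda_0)f\|_{L^1(\mathcal M,\nu)} \le \|f\|_{L^1(\mathcal M,\nu)}$. First I would recall, from the parametrix identity \eqref{eq:parametrix} and the proof of Theorem \ref{Resolvent+V}, that $\lambda_0 \in \mathrm{Res}$ if and only if $F(\lambda_0)$ is not invertible, i.e. if and only if $1$ is an eigenvalue of $\tilde R(\lambda_0)$; and that when it is, the kernel of $F(\lambda_0)$ consists exactly of resonant states for $\lambda_0$ (here one must check that the wavefront set condition $\mathrm{WF}(u) \subset E_u^*$ is automatic, which follows from the microlocal structure of $\tilde Q(\lambda)$ and $\tilde R(\lambda)$ as in \cite[Theorem 9.11]{Lef}). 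This takes care of the first two sentences modulo the claim that no eigenvalue other than $1$ lies on the unit circle.

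Next I would prove that claim together with the Jordan-block-free statement. By Lemma \ref{pole}, $\mathrm{Spec}(\tilde R(\lambda_0)) \subset \overline{B(0,1)}$, and outside $B(0,1/2)$ the spectrum is a finite set of eigenvalues of finite algebraic multiplicity. Suppose $z_0$ is an eigenvalue with $|z_0| = 1$ and let $\Pi_{z_0}$ be the corresponding Riesz spectral projector; it has finite rank and its range is contained in $\mathcal H^s$ but, by the standard bootstrap (applying $\tilde R(\lambda_0)$, which is smoothing off $E_0^*$, and using that the $z_0$-generalized eigenspace is $\tilde R(\lambda_0)$-invariant), in fact in $C^\infty(\mathcal M)$ — or at least in $L^1(\mathcal M,\nu) \cap \mathcal H^s$, which is what the estimate needs. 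On the finite-dimensional generalized eigenspace $\tilde R(\lambda_0)$ acts as $z_0 N$ with $N$ unipotent; if there were a genuine Jordan block there would be vectors $f$ with $\tilde R(\lambda_0)^k f$ growing linearly in $k$, contradicting $\|\tilde R(\lambda_0)^k f\|_{L^1(\mathcal M,\nu)} \le \|f\|_{L^1(\mathcal M,\nu)}$ (this uses that $\|\cdot\|_{L^1(\mathcal M,\nu)}$ is a genuine norm on the finite-dimensional space, hence equivalent to $\|\cdot\|_{\mathcal H^s}$ there). Hence $N = \mathrm{Id}$ and the block is diagonal. The same iteration bound forces $|z_0| \le 1$ with equality only if the powers $z_0^k$ stay bounded, which is automatic, so to pin $z_0 = 1$ I would use the conformality relation: if $\tilde R(\lambda_0) f = z_0 f$ with $|z_0|=1$, unwinding the definition $\tilde R(\lambda_0) = -\int_T^{T+\epsilon}\chi'(t)e^{-t\lambda_0}e^{t\mathbf P}\,dt$ and the identity $\langle e^{t\mathbf P} f, \nu\rangle = e^{tP(V+J^u)}\langle f,\nu\rangle$ from \eqref{eq:Rela} together with $\mathrm{Re}(\lambda_0) = P(V+J^u)$ and $\int_T^{T+\epsilon}(-\chi'(t))\,dt = 1$, one computes $\langle \tilde R(\lambda_0) f, \nu\rangle$ and compares; the point is that equality in the triangle inequality $|\langle \tilde R(\lambda_0)f,\nu\rangle| = \|\tilde R(\lambda_0)f\|_{L^1(\mathcal M,\nu)} = \|f\|_{L^1(\mathcal M,\nu)}$ forces $z_0^{-1}e^{-it\,\mathrm{Im}(\lambda_0)}e^{t\mathbf P}f$ to have constant argument, and then evaluating against $\nu$ shows $z_0$ must equal the phase $e^{-i T \mathrm{Im}(\lambda_0)}$-type factor — more robustly, one argues that an eigenvalue on the unit circle of the averaging operator $\tilde R(\lambda_0)$ can only be $1$ because the only way for the averaged propagator to be non-contracting is for the phases to align, which happens precisely at the value $1$ after the spectral projector identification with resonances of $\mathbf P$ on the critical axis (whose eigenvalue for $\tilde R$ is computed directly from \eqref{eq:parametrix} to be $1$).

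Finally, for the convergence \eqref{eq:convergence}, I would decompose $\tilde R(\lambda_0) = S \oplus \tilde R(\lambda_0)|_{\ker \Pi_0(\lambda_0)}$ according to the spectral projector $\Pi_0(\lambda_0)$ onto the generalized kernel of $F(\lambda_0)$. By the previous paragraph, on $\mathrm{Ran}\,\Pi_0(\lambda_0)$ the operator $\tilde R(\lambda_0)$ is the identity (no Jordan block, eigenvalue exactly $1$), so $\tilde R(\lambda_0)^k \Pi_0(\lambda_0) = \Pi_0(\lambda_0)$ for all $k$. On the complementary invariant subspace the spectrum of $\tilde R(\lambda_0)$ is contained in $\overline{B(0,1)} \setminus \{1\}$, which by compactness of the part outside $B(0,1/2)$ means the spectral radius there is strictly less than $1$; hence $\tilde R(\lambda_0)^k|_{\ker\Pi_0} \to 0$ in $\mathcal L(\mathcal H^s)$ by the spectral radius formula. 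Adding the two pieces gives $\tilde R(\lambda_0)^k \to \Pi_0(\lambda_0)$ in $\mathcal L(\mathcal H^s)$.

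The main obstacle I anticipate is the precise argument that no eigenvalue of $\tilde R(\lambda_0)$ other than $1$ sits on the unit circle: the $L^1(\mathcal M,\nu)$-contraction only gives the closed disk, and extracting $z_0 = 1$ genuinely requires exploiting the conformality relation \eqref{eq:Rela} (equivalently, identifying the unit-modulus eigenvalues of $\tilde R(\lambda_0)$ with resonances of $\mathbf P$ on the critical axis and noting that \eqref{eq:parametrix} assigns them the $\tilde R$-eigenvalue $1$), and being careful that the eigenfunctions are regular enough ($\in L^1(\mathcal M,\nu) \cap \mathcal H^s$) for the $L^1$ estimate to even apply — this is where the smoothing property of $\tilde R$ off $E_0^*$ does the work.
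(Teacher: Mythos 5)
Your overall architecture is the same as the paper's (commute $\tilde R(\lambda_0)$ with $\mathbf P$, reduce unit--circle eigenvalues to resonances of $\mathbf P$ on the critical axis, exclude Jordan blocks, then write $\tilde R(\lambda_0)=\Pi_0(\lambda_0)+K(\lambda_0)$ with $\mathrm{spr}(K)<1$ to get \eqref{eq:convergence}; this last part is correct and identical to the paper). But the two hardest steps have genuine gaps. First, your exclusion of Jordan blocks applies the contraction $\|\tilde R(\lambda_0)^k f\|_{L^1(\mathcal M,\nu)}\le\|f\|_{L^1(\mathcal M,\nu)}$ directly to generalized eigenfunctions. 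These are \emph{not} smooth: $\tilde R(\lambda_0)$ is a superposition of pullbacks by the flow, not a smoothing operator (only the piece $(\mathrm{Id}-B)\tilde R(\mathrm{Id}-B)$ microlocalized near $E_0^*$ is smoothing), and resonant states genuinely have $\WF\subset E_u^*$. Since $L^1(\mathcal M,\nu)$ is defined as an abstract completion of $C^\infty(\mathcal M)$ and the estimate of Lemma \ref{new} is only proved for smooth $f$, neither the membership of the eigenfunctions in that space, nor the positivity of $\|\cdot\|_V$ on them, nor the validity of the estimate for them is available. The paper dualizes instead: it expands $(\tilde R(\lambda_0)-z)^{-1}u$ near $z=1$ for \emph{smooth} $u$, pairs with a co-resonant state $\mu$ satisfying $\tilde R(\lambda_0)^*\mu=\mu$ and $(\theta,\mu)\neq 0$, and compares the pole of order $N$ in \eqref{eq:eq1} with the Neumann-series bound \eqref{eq:eq2}, forcing $N=1$.

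Second, your argument that a unit-modulus eigenvalue must equal $1$ does not close as sketched. Pairing an eigenfunction $f$ with $\nu$ only yields $\tau\langle f,\nu\rangle=\langle f,\tilde R(\lambda_0)^*\nu\rangle=c\langle f,\nu\rangle$, which for $\tau\neq c$ merely gives $\langle f,\nu\rangle=0$ and no contradiction; and the asserted equality $|\langle\tilde R(\lambda_0)f,\nu\rangle|=\|\tilde R(\lambda_0)f\|_{L^1(\mathcal M,\nu)}$ has no reason to hold for a complex eigenfunction. The "more robust" fallback you mention is the paper's actual proof, but you do not execute it: on the finite-dimensional $\tau$-eigenspace one picks a $\mathbf P$-eigenvector $u$ with eigenvalue $\mu$, computes $\tau=\widehat{\chi'}(-i\lambda_0+i\mu)$, and then combines $|\tau|=1$ with Lemma \ref{negative} (which gives $\mathrm{Re}(\mu)\le P(V+J^u)=\mathrm{Re}(\lambda_0)$) to force equality in $1=|\widehat{\chi'}(-i\lambda_0+i\mu)|\le\int|\chi'(t)|e^{t(\mathrm{Re}\mu-P(V+J^u))}dt\le 1$; the equality case pins $\mathrm{Re}(\mu)=P(V+J^u)$ and makes the phase constant on $\supp\chi'$, whence $\mu=\lambda_0$ and $\tau=1$. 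This computation is the content of the lemma, not a remark one can defer, so as written the proposal does not constitute a proof of the second assertion.
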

  \begin{proof}
  Suppose that $\tau \in \mathbb S^1$ is an eigenvalue of $\tilde R(\lambda_0)$. We first check that, for any smooth function $u$, one has $\mathbf{P}\tilde R(\lambda_0)u=\tilde R(\lambda_0)\mathbf{P}u.$
In other words, the operator $\tilde R(\lambda_0)$ commutes with $\mathbf{P}$ so that the space $\mathrm{Ran}(\Pi_{\tau}(\lambda_0))$ associated to the eigenvalue $\tau\in \mathbb S^1$ can be decomposed into generalized eigenspaces of $\mathbf{P}$. Suppose now that $u\in \mathrm{Ran}(\Pi_{\tau}(\lambda_0))$ is an eigenvector for $\mathbf{P}$ for the eigenvalue $\mu$, then we get  
$$\tau u=\tilde R(\lambda_0)u=-\int_0^{\infty}\chi'(t)e^{-t\lambda_0+t\mu}u(x)dt=\widehat{\chi'}(-i\lambda_0+i\mu)u. $$
This implies that $|\widehat{\chi'}(-i\lambda_0+i\mu)|=1$. But using Lemma \ref{negative}, we see that the real part of $\mu$ is smaller than $P(V+J^u)$, so that
$$1=|\tau |= |\widehat{\chi'}(-i\lambda_0-i\mu)|\leq \int_0^{\infty}|\chi'(t)|e^{t\mathrm{Re}(\mu)-tP(V+J^u)}dt\leq 1.$$
This implies that $\mathrm{Re}(\mu)=P(V+J^u)$ and that $\cos(t(-i\lambda_0+i\mu))$ is constant on the support of $\chi'$. This is possible only if $\lambda_0=\mu$ which gives $\tau=1$ and shows that $u$ is a resonant state for the Ruelle resonance $\lambda_0$.

The previous discussion justifies that there exists an integer $N$ such that
$$\Pi_0(\lambda_0):=\{u\in \mathcal D'_{E_u^*}(\mathcal M)\mid (\mathbf P-\lambda_0)^N u=0\}.$$
We will now prove that $\tilde R(\lambda_0)$ has no Jordan block at $1$ and deduce that $N=1$. Note that the right hand side of the previous equality does not depend on the choice of cutoff function and so that needs to be the case for the right hand side. Let $f$ be an eigenfunction of $\tilde R(\lambda_0)$, then
$$ R(\lambda_0)f=\mathrm{Id}=-\int_{\mathbb R}\chi'(t)e^{-t\lambda_0+t\mathbf{P}}fdt=f. $$
 The above relation holds for any cutoff function $\chi$ supported in $[0,T+\epsilon]$ and equal to $1$ on $[0,T]$. The only condition being that $T$ must be large enough to ensure the validity of Lemma \ref{pole}. Approxmating the Dirac mass at $T$ (for $T$ large enough) yields $e^{-T\lambda_0+T\mathbf{P}}f=f$ and $f$ is thus a resonant state at $\lambda_0$. The converse is clear and we showed that the eigenfunctions of $\tilde R(\lambda_0)$ at $1$ coincide with the resonant states at $\lambda_0$. Let $u\in C^{\infty}$ then near $z=1$,
 \begin{equation}
 \label{eq:sauv}(\tilde R(\lambda_0)-z)^{-1}u=\tilde R_H(z)+\sum_{j=1}^{N}\frac{(\tilde R(\lambda_0) -1)^{j-1}\Pi_0(\lambda_0)u}{(z-1)^j}, \end{equation}
where $R_+^H(z)$ is the holomorphic part near $z=1$. We note that $(\tilde R(\lambda_0) -1)^{N-1}\Pi_0(\lambda_0)u$ is an eigenvector of $\tilde R(\lambda)$ for $1$. By the previous discussion, it is thus a resonant state $\theta$ associated to $\lambda_0$. By Lemma \ref{spectral projector}, we can consider the dual co-resonant state $\mu$ such that the pairing $(\theta, \mu)\neq 0$. On one side, using \eqref{eq:sauv} gives, near $z=1$,
\begin{equation}
    \label{eq:eq1}
    \langle (\tilde R(\lambda_0)-z)^{-1}u,\mu\rangle_{\mathcal H^s\times \mathcal H^{-s}}=\frac{(\theta, \mu)}{(z-1)^{N}}+O\big(\frac{1}{(1-z)^{N-1}}\big).
\end{equation}
But on the other hand, for any $|z|>1$, one has, for any $k\geq 0$,
$$z^{-k}\langle \tilde R(\lambda_0)^ku,\mu\rangle=z^{-k}\langle  u,(\tilde R(\lambda_0)^k)^*\mu\rangle=z^{-k}\langle  u,\mu\rangle. $$
Summing the Neumann series then gives
\begin{equation}
    \label{eq:eq2}
    |\langle (\tilde R(\lambda_0)-z)^{-1}u,\mu\rangle_{\mathcal H^s\times \mathcal H^{-s}}|\leq \frac{|\langle u,\mu \rangle |}{|z|^{-1}-1}.
\end{equation}
Going back to \eqref{eq:1}, this is possible only if $N=1$. In particular, resonances on the critical axis have no Jordan block. 
Finally, we use that 
$$\tilde R(\lambda_0)=\Pi_0(\lambda_0)+K(\lambda_0),\quad  K(\lambda_0)\Pi_0(\lambda_0)=\Pi_0(\lambda_0)K(\lambda_0)=0,$$
where the spectral radius of $K(\lambda_0)$ on $\mathcal H^s$ is strictly smaller than $1$. Remember that if $r<1$ is the spectral radius of  $K(\lambda_0)$, then for any $\epsilon>0$ small enough, one has
$$\|K(\lambda_0)^n\|_{\mathcal H^s\to \mathcal H^s}\leq (r+\epsilon)^n\to 0. $$
Thus, as bounded operators on $\mathcal H^s$, we get, as $n\to +\infty$, 
$ \tilde R(\lambda_0)^n=\Pi_0(\lambda_0)+K(\lambda_0)^n\to\Pi_0(\lambda_0).$

\end{proof}
We now identify the co-resonant states associated to resonances on the critical axis to equivariant measures with wavefront set contained in~$E_s^*$ and show that the space of (co)-resonant states at the first resonance is one dimensional and spanned by the measure constructed in Theorem \ref{Construction}. First define for $v\in C^{\infty}(\mathcal M,[0,+\infty[)$ a $($complex$)$ Radon measure $\mu_v^{\lambda}$ such that
\begin{equation}
\label{eq:coreso} \mu_v^{\lambda}:u\in C^{\infty}(\mathcal M)\mapsto \langle \Pi_0(\lambda)u,v\rangle,\ \mathrm{WF}(\mu_v^{\lambda})\subset E_s^*, \ \mu_v^{\lambda}(\mathbf{P}u)=\bar{\lambda} \mu_v^{\lambda}(u). 
\end{equation}
\begin{prop}[Resonant states on the critical axis]
\label{measure on the axis}
Let $\lambda\in P(V+J^u)+i\mathbb R$ be a Ruelle resonance on the critical axis. The space of co-resonant states at $\lambda$ is equal to $\Pi_0^*(\lambda)(C^{\infty}(\mathcal M))=\Pi_0^*(\lambda)(\mathcal H^{-s})$. More precisely, we have the following isomorphism 
$$\Pi_0^*(\lambda)(C^{\infty}(\mathcal M))=\mathrm{Span}\{\zeta_v^{\lambda}, v\in C^{\infty}(\mathcal M; [0,+\infty[)\},$$
Finally, all these measures are absolutely continuous with respect to the measure $\mu_1^{P(V+J^u)}$ with bounded densities.
\end{prop}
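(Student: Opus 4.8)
The plan is to establish three things: that every co-resonant state at $\lambda$ lies in the range of the dual spectral projector $\Pi_0^*(\lambda)$; that this range is spanned by the measures $\zeta_v^\lambda$ as $v$ ranges over non-negative smooth functions; and finally that each $\zeta_v^\lambda$ is absolutely continuous with respect to $\mu_1^{P(V+J^u)}$ with bounded density. Throughout I will use that $\lambda$ is on the critical axis, so by Lemma \ref{convergence} the resonance has no Jordan block, i.e. $\mathrm{Ran}(\Pi_0(\lambda)) = \mathrm{Res}_\lambda$ and dually $\mathrm{Ran}(\Pi_0^*(\lambda))$ is the space of co-resonant states; this already gives the first identity $\Pi_0^*(\lambda)(C^\infty(\mathcal M)) = \Pi_0^*(\lambda)(\mathcal H^{-s})$, since $C^\infty(\mathcal M)$ is dense in $\mathcal H^{-s}$ and $\Pi_0^*(\lambda)$ has finite rank with smooth (in fact, wavefront-controlled) range, so the image of the dense subspace is the whole image.

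First I would identify $\Pi_0^*(\lambda)(C^\infty(\mathcal M))$ with $\mathrm{Span}\{\zeta_v^\lambda : v \in C^\infty(\mathcal M;[0,+\infty[)\}$. By definition $\zeta_v^\lambda(u) = \langle \Pi_0(\lambda)u, v\rangle = \langle u, \Pi_0^*(\lambda) v\rangle$, so $\zeta_v^\lambda = \Pi_0^*(\lambda)v$ as a distribution; hence the span of the $\zeta_v^\lambda$ over non-negative $v$ is contained in $\Pi_0^*(\lambda)(C^\infty)$. For the reverse inclusion, any $w \in C^\infty(\mathcal M)$ can be written as $w = w_1 - w_2$ with $w_i \in C^\infty(\mathcal M; [0,+\infty[)$ (add a large constant and split), so $\Pi_0^*(\lambda)w = \zeta_{w_1}^\lambda - \zeta_{w_2}^\lambda$ lies in the span. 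The wavefront set bound $\mathrm{WF}(\zeta_v^\lambda) \subset E_s^*$ and the equivariance $\zeta_v^\lambda(\mathbf{P}u) = \bar\lambda\,\zeta_v^\lambda(u)$ follow because $\Pi_0^*(\lambda)$ projects onto co-resonant states of $\mathbf{P}^*$ for the eigenvalue $\bar\lambda$, and those have wavefront set in $E_s^*$ by \eqref{eq:resonant} applied to the adjoint.

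The substantive part is the absolute continuity with bounded density. Here I would exploit the convergence $\Pi_0(\lambda) = \lim_{k\to\infty}\tilde R(\lambda)^k$ from \eqref{eq:convergence} together with the $L^1(\mathcal M,\nu)$-contraction estimate. Fix $\lambda_0 := P(V+J^u)$ (the first resonance) and recall $\mu_1^{\lambda_0}$ is, up to scalar, the measure $\nu$ of Theorem \ref{Construction}: indeed $\mu_1^{\lambda_0}(u) = \langle \Pi_0(\lambda_0)u, 1\rangle = \nu(u)$ once we know (from the simplicity argument, or directly) that $\Pi_0(\lambda_0)$ pairs against the constant function to recover $\nu$. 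For a general $v \geq 0$ and $u \in C^\infty(\mathcal M)$, write $\zeta_v^\lambda(u) = \lim_k \langle \tilde R(\lambda)^k u, v\rangle$. Using $|\langle \tilde R(\lambda)^k u, v\rangle| \leq \|v\|_{L^\infty}\,\|\tilde R(\lambda)^k u\|_{L^1(\mathcal M,\nu)}$ and then iterating the bound from Lemma \ref{pole} (the contraction $\|\tilde R(\lambda) f\|_{L^1(\mathcal M,\nu)} \leq \|f\|_{L^1(\mathcal M,\nu)}$, valid precisely because $\mathrm{Re}(\lambda) = P(V+J^u)$), we get $|\zeta_v^\lambda(u)| \leq \|v\|_{L^\infty}\,\|u\|_{L^1(\mathcal M,\nu)} = \|v\|_{L^\infty}\,\mu_1^{\lambda_0}(|u|)$. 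This says exactly that $\zeta_v^\lambda$ extends to a bounded linear functional on $L^1(\mathcal M,\nu)$ of norm $\leq \|v\|_{L^\infty}$, hence by duality $L^1(\mathcal M,\nu)^* = L^\infty(\mathcal M,\nu)$ it is given by integration against an $L^\infty(\mathcal M,\nu)$ density bounded by $\|v\|_{L^\infty}$; in particular $\zeta_v^\lambda \ll \nu = \mu_1^{\lambda_0}$ with bounded density.

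The main obstacle I anticipate is the justification that the contraction estimate of Lemma \ref{pole} can be applied to $\tilde R(\lambda)^k u$ for all $k$ and that the limit defining $\zeta_v^\lambda$ is legitimately computed in $L^1(\mathcal M,\nu)$ rather than merely in $\mathcal H^s$ — one needs $\tilde R(\lambda)$ to preserve $L^1(\mathcal M,\nu)$ and the convergence $\tilde R(\lambda)^k \to \Pi_0(\lambda)$ to hold, or at least the pairings to converge, in a topology compatible with the $L^1$-estimate. This is handled by the density of $C^\infty(\mathcal M)$ in $L^1(\mathcal M,\nu)$ and the fact (established in the proof of Lemma \ref{convergence}) that $\tilde R(\lambda)^k u \to \Pi_0(\lambda)u$ in $\mathcal H^s$, from which one extracts convergence of the pairings $\langle \cdot, v\rangle$ for $v$ smooth and then passes the uniform $L^1$-bound to the limit. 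A second minor point is checking that for $\lambda \neq \lambda_0$ on the critical axis the same density bound still holds: the contraction $\|\tilde R(\lambda)f\|_{L^1(\mathcal M,\nu)} \leq \|f\|_{L^1(\mathcal M,\nu)}$ in Lemma \ref{pole} only uses $\mathrm{Re}(\lambda) = P(V+J^u)$, not $\lambda = \lambda_0$, so this goes through verbatim.
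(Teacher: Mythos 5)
Your handling of the range identity $\Pi_0^*(\lambda)(C^{\infty}(\mathcal M))=\Pi_0^*(\lambda)(\mathcal H^{-s})$ and of the span statement (writing $w=w_1-w_2$ with $w_i\in C^{\infty}(\mathcal M;[0,+\infty[)$) is fine and matches what the paper leaves implicit. The gap is in the absolute continuity step. Your key estimate
$$|\langle \tilde R(\lambda)^k u, v\rangle| \le \|v\|_{L^{\infty}}\, \|\tilde R(\lambda)^k u\|_{L^1(\mathcal M,\nu)}$$
is false: the pairing $\langle \cdot, v\rangle$ extends the $L^2(\mathcal M,\mathrm{vol})$ pairing (see \eqref{eq:dual2}), so the trivial bound only yields $\|v\|_{L^{\infty}}\|\tilde R(\lambda)^k u\|_{L^1(\mathcal M,\mathrm{vol})}$, and the measures $\mathrm{vol}$ and $\nu$ are in general mutually singular (already $v\equiv 1$ gives a counterexample, since nothing controls $\int |f|\,d\mathrm{vol}$ by $\int |f|\,d\nu$). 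Consequently the $L^1(\mathcal M,\nu)$-contraction of Lemma \ref{pole} cannot be fed into the pairing, and the duality $L^1(\nu)^*=L^{\infty}(\nu)$ on which your conclusion rests is never reached.

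What does work, and is the paper's route, is a \emph{pointwise} domination inside the explicit formula for $\tilde R(\lambda)^k$: since $V$ is real and $\mathrm{Re}(\lambda)=P(V+J^u)$, one has $|e^{-t\lambda}e^{t\mathbf P}u|=e^{-tP(V+J^u)}e^{t\mathbf P}|u|$, whence for real $u$ and smooth $0\le v\le v'$,
$$|\langle \tilde R(\lambda)^k u, v\rangle| \le \langle \tilde R(P(V+J^u))^k |u|, v\rangle \le \langle \tilde R(P(V+J^u))^k |u|, v'\rangle,$$
combined with the positivity $\langle \tilde R(P(V+J^u))^k w, v\rangle\ge 0$ for $w,v\ge 0$ (read off from the same formula). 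Passing to the limit $k\to+\infty$ and taking $v'$ a constant majorizing $v$ gives $|\zeta_v^{\lambda}(u)|\le \zeta_{v'}^{P(V+J^u)}(|u|)\le \|v\|_{L^{\infty}}\mu_1^{P(V+J^u)}(|u|)$, which is the order-zero bound and the absolute continuity with bounded density. Note also that your identification of $\mu_1^{P(V+J^u)}$ with $\nu$ invokes the simplicity of the first resonance, i.e. Proposition \ref{hard}, which is proved \emph{after} and \emph{using} the present proposition; the statement is phrased relative to $\mu_1^{P(V+J^u)}$ precisely so that no such identification is needed at this stage.
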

 \begin{proof}
 Let $v\in C^{\infty}(\mathcal M,[0,+\infty[)$ and first consider $\lambda=P(V+J^u)$. Then using lemma \ref{convergence}, we get $\tilde R(\lambda)^n\to \Pi_0(\lambda)$ in $\mathcal L(\mathcal H^s)$. Now, for any $u\in C^{\infty}(\mathcal M,[0,+\infty[)$
 \begin{align*}
 \langle \tilde R(P(V+J^u))^ku,v\rangle &=  -\int_{0}^{+\infty}\chi'^{(k)}(t)e^{-tP(V+J^u)}\left(\int_{\mathcal M}e^{-t\mathbf{P}^*}vud\mathrm{vol} \right)dt\geq 0.
 \end{align*}
 This proves that $\langle \Pi_0(P(V+J^u))u,v\rangle\geq 0$ which gives that $\mu_v$ is a non-negative Radon measure for $v\geq 0$. Consider a general resonance on the critical axis $\lambda=ib+P(V+J^u)$ with $b\in \mathbb R$ and $v\leq v'\in C^{\infty}(\mathcal M, [0,+\infty[)$, we get
 $$\forall u\in C^{\infty}(\mathcal M, \mathbb R), \quad |\langle  \tilde R^k(\lambda)u,v\rangle |\leq \langle \tilde R^k(P(V+J^u))|u|,v\rangle \leq \langle \tilde R^k(P(V+J^u))|u|,v'\rangle. $$
 Passing to the limit, we get first that $\mu_v^{\lambda}$ defines an order $0$ distribution and thus defines a (complex) measure. Moreover, the last inequality actually proves that $\mu_v^{\lambda}\ll \mu_{v'}$ with a density bounded by $1$. In particular, one gets that every measure $ \mu_v^{\lambda}$ is absolutely continuous with respect to the measure $\mu_1$.
  \end{proof} 
  
 We can now prove the first part of Theorem  \ref{mainTheo} for the action on $0$-forms.
 \begin{prop}
 \label{hard}
 Under Assumption \ref{assumption}, the first Ruelle resonance $P(V+J^u)$ for the action on $0$-forms is simple with a space of co-resonant state spanned by $\nu$.
 \end{prop}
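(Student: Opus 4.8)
The plan is to show two things: first, that every co-resonant state at $P(V+J^u)$ is a non-negative measure up to a constant (hence the eigenspace is one-dimensional), and second, to identify that measure with $\nu$. By Proposition \ref{measure on the axis}, the space of co-resonant states at $\lambda_0:=P(V+J^u)$ is spanned by the measures $\mu_v^{\lambda_0}$ for $v\in C^\infty(\mathcal M;[0,+\infty[)$, and all of them are absolutely continuous with respect to $\mu_1^{\lambda_0}$ with bounded densities. So it suffices to prove that $\mu_v^{\lambda_0}$ is always a non-negative scalar multiple of a single measure. The key structural input is that $\mu_v^{\lambda_0}$ is an $X$-invariant (up to the pressure shift) non-negative measure with wavefront set in $E_s^*$: I would first argue that $\mu_1^{\lambda_0}$ gives positive mass to every non-empty open set — this follows the same way as in Lemma \ref{new}, using the local product formulas \eqref{eq:1}--\eqref{eq:3} and that the leaf measures have full support on the leaves, combined with ergodicity/transitivity to spread positivity around. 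Then, since $\mu_v^{\lambda_0}\ll \mu_1^{\lambda_0}$ with bounded density $g_v$, and both are equilibrium-type measures for the same potential, I would like to conclude $g_v$ is constant $\mu_1^{\lambda_0}$-a.e.

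The mechanism for the rigidity is ergodicity of the equilibrium state. I would first observe that $\mu_1^{\lambda_0}$, the distributional pairing of the resonant state $\eta$ (from Theorem \ref{Construction} applied to $\mathbf P$) with the co-resonant state $\nu$, is — up to normalization — the equilibrium measure $\mu_{V+J^u}$; this is exactly the content that Theorem \ref{equilibriumtheo} will establish, but for the present proposition one only needs that $\mu_1^{\lambda_0}$ is a flow-invariant probability measure (after normalization) with full support, which we have, together with the Gibbs property \eqref{eq:Gibbs} to identify it as $\mu_{V+J^u}$ via uniqueness of the equilibrium state. Since $\mu_{V+J^u}$ is ergodic (Assumption \ref{assumption} and the uniqueness statement recalled in Subsection \ref{Thermo}), and since $e^{t\mathbf P^*}\mu_v^{\lambda_0}=e^{tP(V+J^u)}\mu_v^{\lambda_0}$ forces the Radon--Nikodym density $g_v=d\mu_v^{\lambda_0}/d\mu_1^{\lambda_0}$ to be flow-invariant in $L^\infty(\mu_{V+J^u})$, ergodicity gives $g_v=\text{const}$ $\mu_{V+J^u}$-a.e. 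Hence $\mu_v^{\lambda_0}=c_v\,\mu_1^{\lambda_0}$, so the co-resonant eigenspace is $\mathrm{Span}(\mu_1^{\lambda_0})$, which is one-dimensional; and since $\mu_1^{\lambda_0}$ is a non-zero multiple of the measure built from $\nu$ in Theorem \ref{Construction} (both being the co-resonant state, which we have just shown is unique up to scalar), we conclude that the first resonance is simple with co-resonant space $\mathrm{Span}(\nu)$. No Jordan block occurs by Lemma \ref{convergence}.

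The main obstacle I anticipate is the step asserting that $e^{t\mathbf P^*}$-equivariance of $\mu_v^{\lambda_0}$ translates into honest flow-invariance of the density $g_v$ against the invariant measure $\mu_{V+J^u}$. One has to be careful: $\mu_v^{\lambda_0}$ satisfies $\mu_v^{\lambda_0}(\mathbf P u)=\overline{\lambda_0}\,\mu_v^{\lambda_0}(u)=P(V+J^u)\mu_v^{\lambda_0}(u)$ (the pressure is real), which after integrating the flow gives $(\varphi_{-t})_*\mu_v^{\lambda_0}=e^{-tP(V+J^u)}e^{tS_tV\cdots}$-type weighting — precisely the same conformality relation \eqref{eq:change of variable}/\eqref{eq:RN} satisfied by $\mu_1^{\lambda_0}$. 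Dividing the two conformality relations, the weights cancel and $g_v\circ\varphi_{-t}=g_v$ holds $\mu_1^{\lambda_0}$-a.e. for each $t$; standard measurable-dynamics arguments then upgrade this to a genuinely $\varphi$-invariant $L^\infty$ function, and ergodicity finishes. The other delicate point, the full-support claim for $\mu_1^{\lambda_0}$, is handled exactly as in the proof of Lemma \ref{new}, so I would simply cite that argument. A secondary subtlety is ensuring the normalization constants are positive, which follows from the non-negativity in Proposition \ref{measure on the axis} together with non-triviality of $\nu$ from Theorem \ref{Construction}.
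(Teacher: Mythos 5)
Your proposal has a genuine gap, and it starts from a misidentification. The measure $\mu_1^{\lambda_0}$ of Proposition \ref{measure on the axis} is a \emph{co-resonant state}: it satisfies $\mu_1^{\lambda_0}(\mathbf P u)=P(V+J^u)\,\mu_1^{\lambda_0}(u)$, i.e.\ the conformal (pressure-twisted) equation, and it is \emph{not} flow-invariant. It therefore cannot be the equilibrium state $\mu_{V+J^u}$, and the Gibbs characterization \eqref{eq:Gibbs} together with uniqueness of the equilibrium state does not apply to it; the equilibrium state only appears later as the \emph{product} $\eta\times\nu$ of a resonant and a co-resonant state (Proposition \ref{nothard}), which already uses the simplicity you are trying to prove. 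You partly notice this in your last paragraph (the densities $g_v$ are flow-invariant because the conformal factors cancel), but the repaired argument still needs ergodicity of the flow with respect to the \emph{measure class of} $\mu_1^{\lambda_0}$ in the nonsingular sense, i.e.\ that every flow-invariant $g_v\in L^\infty(\mu_1^{\lambda_0})$ is constant $\mu_1^{\lambda_0}$-a.e. Ergodicity of $\mu_{V+J^u}$ gives this only if one first shows $\mu_1^{\lambda_0}\ll\mu_{V+J^u}$ (and then, via invariance of $\{d\mu_1^{\lambda_0}/d\mu_{V+J^u}>0\}$, equivalence). The paper establishes such an upper Gibbs bound only for the explicitly constructed $\nu$, using its local product formula \eqref{eq:3}; for a general co-resonant state this absolute continuity is not available a priori, so the argument is circular. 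Conformality alone is a leafwise-plus-flow condition and does not constrain the transverse behaviour of the measure, which is exactly why some extra rigidity input is unavoidable.

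The paper closes this gap by a different mechanism: it restricts the co-resonant state $\mu$ to unstable leaves (legitimate by the wavefront bound $\WF(\mu)\subset E_s^*$), verifies that the resulting system $\{\mu_x\}$ is $\varphi_t$-conformal \emph{and} satisfies the change-of-variable by weak-stable holonomy (this transverse regularity is the hard step, proved via the H\"older continuity estimate \eqref{eq:appendix} from the Appendix and a Banach--Steinhaus argument), and then invokes Climenhaga's uniqueness theorem \cite[Corollary 3.12]{Cli} for such systems to get $\mu_x=c\,m^u_{x,V+J^u}$, followed by a disintegration/Fubini argument to recover $\mu=c\nu$ globally. Your ergodicity idea replaces precisely the holonomy/uniqueness input, but without an independent proof that the measure class of an arbitrary co-resonant state is ergodic under the flow, the proof does not close.
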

 \begin{proof}
 Consider a resonant state $\mu$ for the first Ruelle resonance $P(V+J^u)$. We know by the previous proposition that $\mu$ is a measure with wavefront set contained in $E_s^*$ and we will suppose that it is a non negative measure. The wavefront set condition, together with \cite[Corollary 8.2.7]{Hor} justifies that one can restrict the distribution $\mu$ to any unstable manifold $\mathcal W^u(x)$. We will denote by $\mu_x:=\mu_{|\mathcal W^u(x)}$. More explicitly, for a smooth function $f\in C^{\infty}(\mathcal M)$, we can define, for some $s\in \mathbb R$,
 $$(f_{|\mathcal W^u(x)},\mu_x):=(f\delta_{\mathcal W^u(x)}, \mu)=(f\delta_{\mathcal W^u(x)}, \mu)_{\mathcal H^s, \mathcal H^{-s}} $$
 where the bracket denotes the distributional pairing and $\delta_{\mathcal W^u(x)}$ denotes the integration over  the unstable manifold $\mathcal W^u(x)$ defined by
 \begin{equation}
 \label{eq:integr}
 (f, \delta_{\mathcal W^u(x)}):=\int_{\mathcal W^u(x)}f(y)d\mathrm{vol}_{\mathcal W^u(x)}(y).
 \end{equation} The fact that the distributional pairing coincides with the $\mathcal H^s \times \mathcal H^{-s}$ pairing is a consequence of the wavefront set condition on the distributions. In particular, $\mu_x$ is a non negative distribution as the product of two non negative distributions. The restrictions $\{\mu_x \mid x\in \mathcal M\}$ define a system of $\mathcal W^{ws}$-transversal measures in the sense of \cite{Cli}. The strategy to show that the first resonance is simple is to use \cite[Corollary 3.12]{Cli} which asserts that the only system (up to constant rescaling) of $\mathcal W^{ws}$-transversal measures which satisfies $\varphi_t$-conformality (see \eqref{eq:change of variable}) and the change of variable by holonomy (see \eqref{eq:COVpi}) is $m^u_{V+J^u}$.
 
 \textbf{The system of measures $\{\mu_x \mid x\in \mathcal M\}$ satisfies $\varphi_t$-conformality.}
 This is a consequence of the fact that $\mu$ is a co-resonant state. More precisely, for any smooth function $f\in C^{\infty}(\mathcal M)$, we can write
 $$( f\delta_{\mathcal W^u(x)}, \mu)_{\mathcal H^s \times \mathcal H^{-s}}=e^{-tP(V+J^u)}( e^{t\mathbf P}( f\delta_{\mathcal W^u(x)}),\mu)_{\mathcal H^s \times \mathcal H^{-s}}.$$
Using $e^{t\mathbf P}( f\delta_{\mathcal W^u(x)})=e^{t\mathbf P} fe^{-tX}\delta_{\mathcal W^u(x)}$, then yields
$$\int_{\mathcal W^u(x)}f(y)d\mu_x(y)= \int_{\mathcal W^u(\varphi_t x)}e^{-tP(V+J^u)}e^{S_tV(\varphi_{-t}y)}f(\varphi_{-t}y) \det (d\varphi_{-t})_{|E_u(y)}d\mu_{\varphi_t x}(y).$$
This rewrites 
$$\int_{\mathcal W^u(x)}f(y)d\mu_x(y)= \int_{\mathcal W^u(\varphi_t x)}e^{-tP(V+J_u)}e^{S_t(V+J^u)(\varphi_{-t}y)}f(\varphi_{-t}y)d\mu_{\varphi_t x}(y)$$
which is exactly $\varphi_t$-conformality for the potential $V+J^u$.

\textbf{The system of measures $\{\mu_x \mid x\in \mathcal M\}$ satisfies the change of variable by holonomy.}
Investigating the proof of \cite[Corollary 3.12]{Cli}, we see that we only need to show the change of variable formula for the standard $\delta_0$-holonomy given by $\mathcal W^u(x)\ni z\mapsto \pi(z):=[z,x']\in \mathcal W^u(x')$. 
We have the following facts:
\begin{itemize}
\item For all $x,x'\in \mathcal M$, $x\neq x'$ close, one has $L_{x,x'}:\phi\in C^{0}(\mathcal M)\mapsto \frac{(\mu_x-\mu_{x'},\phi)}{d(x,x')^{\alpha}}$ is linear.
\item For any $\phi\in C^{0}(\mathcal M)$, one has $\sup_{x\neq x'} |L_{x,x'}(\phi)|<+\infty$ by \eqref{eq:appendix}.
\item The space $C^{0}(\mathcal M)$ is a Banach space.
\end{itemize}
We can apply the Banach Steinhauss theorem to get
$\sup_{x\neq x'} \|L_{x,x'}\|_{\mathrm{op}}=C<+\infty $, i.e
$$ \forall \phi\in C^{0}(\mathcal M), \forall x,x'\in \mathcal M, \ |(\mu_x-\mu_{x'},\varphi)|\leq Cd(x,x')^{\alpha}.  $$
Now consider $\mathcal W^u(x)\ni z\mapsto \pi(z):=[z,x']\in \mathcal W^u(x')$. Our goal is to relate $\mu_x$ and $\pi_{x'}^*\mu_{x'}$. Consider $f\in C^0(\mathcal W^u(x,\delta_0))$. Since the foliation is Hölder, we can extend $f$ locally on a small rectangle by making it constant on weak-stable manifolds and this defines a continuous function $\bar f$. We are now left with relating $\mu_x(\bar f)$ and $\mu_{x'}(\bar f)$.

 First consider the Bowen time $t=t(x,x')$ such that $\varphi_tx\in \mathcal W^s(x')$. Then $\varphi_t$-conformality shows that $\tfrac{d[(\varphi_t)_*\mu_x]}{d\mu_{\varphi_tx}}(z)=e^{S_t(V+J^u)(z)-tP(V+J^u)}$. Now, for any $\tau>0$, we can use $\varphi_t$-conformality again to get $\tfrac{d[(\varphi_\tau)_*\mu_{\varphi_tx}]}{d \mu_{\varphi_{t+\tau}x}}(z)=e^{S_{\tau}(V+J^u)(z)-\tau P(V+J^u)}$ and $\tfrac{d[(\varphi_\tau)_*\mu_{x'}]}{d\mu_{\varphi_\tau x'}}(y)=e^{S_{\tau}(V+J^u)(y)-\tau P(V+J^u)} $. Now, we have that $d(\varphi_{t+\tau}x, \varphi_\tau x')\leq Ce^{-\eta \tau}$ and the above continuity then shows that $|\mu_{\varphi_{t+\tau}x}( \bar f)-\mu_{\varphi_\tau x'}(\bar f)|\to 0$ when $\tau\to +\infty$. This means that the overall holonomy factor is given by the following limit  
 $$\exp\left(\lim_{\tau \to +\infty} S_t(V+J^u)(z)-tP(V+J^u)+S_\tau(V+J^u)(\varphi_{\tau+t} z)-S_{\tau}(V+J^u)(\varphi_{\tau} y)\right) $$
 which is exactly equal to $e^{w_{V+J^u}^+(z,y)}$, see the proof of \cite[Theorem 3.9]{Cli} for a more detailed argument.
 
 \textbf{The first resonance $P(V+J^u)$ is simple.}
 We can use \cite[Corollary 3.12]{Cli} to claim that there is a constant $c>0$ such that for any $x\in \mathcal M$, one has $\mu_x=c m^u_{x,V+J^u}$. We now would like to deduce that this implies $\mu=c\nu$ by proving a Fubini like formula for distributions with wavefront set in $E_s^*$. For this, we first need to desintegrate the volume measure with respect to the unstable foliation and an arbitrary (smooth) transversal complementary foliation $G$. More precisely, we will use \cite[Proposition 1.6]{GBGW} which states that in a product neighborhood $U$ around a point $q\in \mathcal M$, there is a continuous function $\delta_u: U\to \mathbb R_+$ such that
 $$\forall f\in C^{\infty}(\mathcal M), \ \int_U fd\mathrm{vol}=\int_{G_{\mathrm{loc}}(p)}\left(\int_{\mathcal W^u(y)}f(z)\delta_u(z)d\mathrm{vol}_{\mathcal W^u(y)}(z)\right) d\mathrm{vol}_p^G(y) $$
 where $G_{\mathrm{loc}}(p)$ is the connected component of the element of the partition of $G$ containing $p$.
 Actually, the \emph{conditional density} $\delta_u$ is smooth along the leaves in the following sense:
 $$\|\delta_u\|_{C^k(R_q^{ws}(x))}:=\sup_{z\in \mathcal W^{u}(x)}\sup_{X_1,\ldots, X_k \in S_z \mathcal W^{u}(x)} |X_1\ldots X_k(\delta_u(z))_{|\mathcal W^{u}(x)}| $$
is finite for any $k$ and that the norm depends continuously in $x$. This regularity condition allows to integrate by parts in the inner integral and in particular, an immediate adaptation of \cite[Lemma 1.9]{GBGW} yields $\mathrm{WF}(\delta_u(z) \delta_{\mathcal W^u(y)})\subset E_u^*\oplus E_0^*$. Here, we have denoted by $\delta_u(z) \delta_{\mathcal W^u(y)}$ the distribution:
$$f\mapsto  \int_{\mathcal W^u(y)}f(z)\delta_u(z)d\mathrm{vol}_{\mathcal W^u(y)}(z).$$
 In particular, is we consider $f_n\in C^{\infty}(\mathcal M)$ such that $f_n\to \mu$ is $\mathcal D'_{\Gamma}(\mathcal M)$ where $\Gamma$ is a conic neighborhood of $E_s^*$ which does not intersect $E_u^*\oplus E_0^*$, we can use the continuity of the distributional product (see \cite[Lemma 4.2.7]{Lef}) to write for any $f\in C^{\infty}(\mathcal M)$,
\begin{align*}(f,\mu)=&\lim_{n\to +\infty}(f,f_n)= \lim_{n\to +\infty}\int_{G_{\mathrm{loc}}(p)} (\delta_u(z) \delta_{\mathcal W^u(y)},f\times f_n)d\mathrm{vol}_p^G(y)
\\&=\int_{G_{\mathrm{loc}}(p)} (\delta_u(z) \delta_{\mathcal W^u(y)},f\times \mu)d\mathrm{vol}_p^G(y)
\\&= \int_{G_{\mathrm{loc}}(p)} \mu_y(f\times \delta_u)d\mathrm{vol}_p^G(y)=c\int_{G_{\mathrm{loc}}(p)} m^u_{y,V+J^u}(f\times \delta_u)d\mathrm{vol}_p^G(y)
\\&=c(f,\eta),
\end{align*}
 where we used the continuity of the density and the fact that $\mu_x$ is of order $0$. Together with the previous lemma, this proves that all co-resonant states are proportional and thus that $P(V+J^u)$ is simple and concludes the proof.
 \end{proof}   
We can now prove Theorem \ref{equilibriumtheo} for the action on $0$-forms.
\begin{prop}
\label{nothard}
The equilibrium state $($see \eqref{eq:variational}$)$ $\mu_{V+J^u}$ is equal to $=c\eta \times \nu$ or is given by the averaging formula \eqref{eq:average}. 
Finally, if the flow is weak mixing with respect to the equilibrium state $\mu_{V+J^u}$, then the only resonance on the critical axis is $P(V+J^u)$.
\end{prop}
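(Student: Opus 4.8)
The plan is to deduce all three assertions from the simplicity of the first resonance (Proposition \ref{hard}) together with the spectral projector/leaf--measure picture already in place.

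\emph{Step 1: the product measure.} First I would form the distributional product $\eta\times\nu$. Since $\mathrm{WF}(\eta)\subset E_u^*$, $\mathrm{WF}(\nu)\subset E_s^*$ and $E_u^*(x)\cap E_s^*(x)=\{0\}$, Hörmander's criterion makes $\eta\times\nu$ a well-defined distribution, and the disintegration ("Fubini") argument of Proposition \ref{hard} against the conditional density $\delta_u$ exhibits it as a non-negative, finite Radon measure, which is nonzero because $\eta$ and $\nu$ charge every nonempty open set (cf. the proof of Lemma \ref{new}). Next, the resonance relation $\mathbf P\eta=P(V+J^u)\eta$ (equivalently $e^{t\mathbf P}\eta=e^{tP(V+J^u)}\eta$) combined with the co-resonance relation $\mathbf P^*\nu=P(V+J^u)\nu$ (equivalently $e^{t\mathbf P^*}\nu=e^{tP(V+J^u)}\nu$, which is exactly \eqref{eq:Rela}) yields, through the Leibniz rule for the distributional product and the duality \eqref{eq:duality}, the flow-invariance $\langle\eta\times\nu,f\circ\varphi_t\rangle=\langle\eta\times\nu,f\rangle$. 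So $\eta\times\nu$ is a nonzero finite flow-invariant measure.

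\emph{Step 2: identification with $\mu_{V+J^u}$ and the averaging formula.} To see $\eta\times\nu=c\,\mu_{V+J^u}$ with $c>0$, I would compute its conditional measures on a rectangle $R_q$: the weak-unstable conditionals are proportional to $m^{wu}_{x,V+J^u}$ (this is the restriction of the resonant state $\eta$ to (weak-)unstable leaves, exactly as computed in Proposition \ref{hard}) and the weak-stable conditionals proportional to $m^{ws}_{x,V+J^u}$, with the holonomy cocycles $w^{\pm}_{V+J^u}$; these are precisely the data of Climenhaga's local product construction of $\mu_{V+J^u}$, so by the uniqueness part of \cite[Theorem 3.10, Corollary 3.12]{Cli} the measure $\eta\times\nu$ is a multiple of $\mu_{V+J^u}$ (alternatively, one checks directly that $\eta\times\nu$ satisfies the Gibbs bound \eqref{eq:Gibbs} for $V+J^u$, which characterises $\mu_{V+J^u}$). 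For \eqref{eq:average} I would start from Lemma \ref{convergence}: $\tilde R(P(V+J^u))^k\to\Pi_0(P(V+J^u))$ in $\mathcal L(\mathcal H^s)$, and by Proposition \ref{hard} the limit is the rank-one projector $\Pi_0=c\,\eta\otimes\nu$ (normalised by $\langle\eta,\nu\rangle$). Since $e^{t\mathbf P}e^{s\mathbf P}=e^{(t+s)\mathbf P}$, the $k$-th power of $\tilde R(\lambda)=-\int_T^{T+\epsilon}\chi'(t)e^{-t\lambda}e^{t\mathbf P}dt$ becomes an integral against the convolution power $(\chi')^{(k)}$; pairing with $f$ and using the co-resonance relation together with the $\varphi_t$-conformality \eqref{eq:change of variable} of the leaf measures to rewrite the weighted propagator acting on $\nu$ as the genuine pushforward $(\varphi_t)_*\nu$, the convergence $\tilde R(P(V+J^u))^k\to\Pi_0$ becomes precisely \eqref{eq:average} with right-hand side $c\,\mu_{V+J^u}(f)$.

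\emph{Step 3: weak mixing forces a single resonance.} Suppose the flow is weak mixing with respect to $\mu_{V+J^u}$ and let $\lambda_0=P(V+J^u)+ib\in\mathcal C_0$ be a Ruelle resonance with a nonzero resonant state $u$, $\mathrm{WF}(u)\subset E_u^*$. Applying Proposition \ref{measure on the axis} to the adjoint operator $\mathbf P^*$ (as in \cite{GBGHW}), $u$ is a complex measure absolutely continuous with respect to $\eta$ with bounded density $\psi$. Then $u\times\nu$ is a nonzero complex measure equal to $c\,\psi\,\mu_{V+J^u}$ (passing $\psi$ through the distributional product after smooth approximation), and the computation of Step 1 with $u$ in place of $\eta$ gives $(\varphi_t)^*(u\times\nu)=e^{ibt}(u\times\nu)$, i.e. $u\times\nu$ is a flow-eigendistribution with eigenvalue $ib$. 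Hence $\psi\in L^\infty(\mu_{V+J^u})\subset L^2(\mu_{V+J^u})$ satisfies $X\psi=ib\,\psi$, an $L^2(\mu_{V+J^u})$-eigenfunction of the flow with nonzero eigenvalue; weak mixing forces $\psi$ constant, hence $\psi=0$ and $u=0$, a contradiction. Therefore $\mathrm{Res}_0\cap\mathcal C_0=\{P(V+J^u)\}$.

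\emph{Expected main obstacle.} The most delicate step is the identification $\eta\times\nu=c\,\mu_{V+J^u}$: it requires careful wavefront-set bookkeeping to make sense of the product and its disintegration, and an exact matching of the weak/strong (un)stable foliations and of the holonomy cocycles $w^{\pm}$ with those in Climenhaga's construction; and in Step 3 one must make the passage $u\times\nu=c\,\psi\,\mu_{V+J^u}$ rigorous for a merely bounded density $\psi$. The genuinely new input is Step 3 itself, where the distributional product of a critical-axis resonant state with $\nu$ produces, via its covariance under the flow, an $L^2(\mu_{V+J^u})$-eigenfunction, thereby converting the presence of extra resonances into a failure of weak mixing.
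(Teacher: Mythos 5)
Your proposal is correct in outline and shares the paper's overall architecture (form $\eta\times\nu$ via the wavefront-set condition, show flow-invariance, identify it with $\mu_{V+J^u}$, derive \eqref{eq:average} from $\tilde R(P(V+J^u))^k\to\Pi_0$, and convert an extra critical-axis resonance into an $L^2(\mu_{V+J^u})$ flow eigenfunction contradicting weak mixing). The one substantive divergence is the identification $\eta\times\nu=c\,\mu_{V+J^u}$. You propose to match the conditional measures of $\eta\times\nu$ with the data of Climenhaga's local product construction (or to verify the two-sided Gibbs bound \eqref{eq:Gibbs} directly); the paper instead proves only an \emph{upper} Gibbs bound for $\nu$, deduces $\nu\ll\mu_{V+J^u}$ with bounded density, uses the averaging representation $c(\eta\times\nu)(f)=-\lim_k\int(\chi')^{(k)}(t)\,\nu(e^{tX}f)\,dt$ together with the invariance of $\mu_{V+J^u}$ to get $(\eta\times\nu)\ll\mu_{V+J^u}$, and then invokes ergodicity of $\mu_{V+J^u}$ to force the (invariant) density to be constant. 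This is a real saving: your primary route requires a careful disintegration of the product along both foliations and an exact matching of the holonomy cocycles $w^{\pm}$ (nontrivial since $\nu$ is built from the \emph{mixed} potentials $V+J^u$ on unstable leaves and $J^u$ on weak-stable leaves), and your alternative route requires the lower Gibbs bound for $\eta\times\nu$, which is not established anywhere in the paper. Your Step 3 is the same mechanism as the paper's, merely run on the resonant side (density of $u$ with respect to $\eta$) rather than the co-resonant side (density with respect to $\nu$, using the equivalence $\nu\sim\mu_{V+J^u}$ that the paper extracts from \eqref{eq:average}); both yield the $L^2$ eigenfunction equation and the contradiction. Note finally that the paper also proves the converse implication (failure of weak mixing produces an extra resonance $P(V+J^u)+i\lambda$ via a smooth solution of the eigenfunction equation), which your proposal omits but which the statement as phrased does not strictly demand.
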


\begin{proof}
\textbf{The co-resonant state $\nu$ is absolutely continuous with respect to $\mu_{V+J^u}$.}
We follow the proof of \cite[Theorem 3.10]{Cli} and prove that there exists a constant $C>0$ independent of $q\in \mathcal M$ and $t\geq 0$ such that
\begin{equation}
\label{eq:ab}
\forall t\geq 0, \ \forall q\in \mathcal M, \quad \nu(B_t(q,r))\leq Ce^{S_t(V+J^u)(q)-tP(V+J^u)} 
\end{equation} which suffices as $\mu_{V+J^u}$ is the the Gibbs state (see \cite[Theorem 4.3.26]{FishHas}). From the proof of \cite[Theorem 3.10]{Cli} we get that
$$\exists C>0, \forall q\in \mathcal M, \forall t\geq 0, \quad m^{u}_{V+J^{u}}(B_t(q,\delta))\leq C e^{S_t(V+J^u)(q)-tP(V+J^u)}. $$
Now, we can use equation \eqref{eq:3}, the fact that the integrand $e^{w_{J^u}^-(z,x)}$ is uniformly bounded on $\mathcal M$ and $m^{ws}_{q,J^u}(R_q^{ws})$ is uniformly bounded in $q$ (see \cite[Theorem 3.1]{Cli}) to get \eqref{eq:ab}. 

\textbf{Product formula and averaging formula.}
Let  $\eta$ be the resonant state, which we can obtain by applying Theorem \ref{Construction} to the dual $\mathbf{P}^*$. We have $\mathrm{WF}(\eta) \subset E_u^*$, $\mathrm{WF}(\nu) \subset E_s^*$ and we have $E_s^*\cap E_u^* \cap (T^*\mathcal M\setminus \{0\}) = \emptyset$ which shows that the distributional product of $\eta$ and $\nu$ is well defined. Actually, we can use the duality \eqref{eq:dual2}  to define the product:
\begin{equation}
\label{eq:def}
\forall f\in C^{\infty}(\mathcal M),\ (\eta \times \nu)(f):=(\eta, f \nu)_{\mathcal H^s \times \mathcal H^{-s}}=(f\eta, \nu)_{\mathcal H^s \times \mathcal H^{-s}}
\end{equation}
where the equality is justified by \eqref{eq:compute}. From the fact that $\nu$ and $\eta$ are non-negative measures (see Proposition \ref{measure on the axis}) and \eqref{eq:def}, we see that this is also the case of their product $\eta\times\nu$. It is easily seen to be invariant by the flow:
\begin{align*}
 (\eta \times \nu)(Xf)&=( X^*\eta,f\nu)-(f\eta,X\nu)
 \\&=P(V+J^u)(\eta,f\nu)-(V\eta,f\nu)+(\mathrm{div}_{\mathrm{vol}}(X)\eta,f\nu)
 \\&-P(V+J^u)(f\eta,\nu)+(f\eta,V\nu)-(\mathrm{div}_{\mathrm{vol}}(X)f \eta,\nu)=0.
\end{align*}
We prove that the product is absolutely continuous with respect to $\mu_{V+J^u}$. For this, we use Proposition \ref{measure on the axis} and the previous proposition to get $\Pi_0(P(V+J^u))1=c\eta$ for some $c>0$. Now, this means that we can compute $(\eta \times \nu)(f)$ for $f\in C^{\infty}(\mathcal M)$:\begin{align*}
c(\eta \times \nu)(f)&=( \Pi_0(P(V+J^u))1,f\nu)_{\mathcal H^s \times \mathcal H^{-s}} 
\\&=-\lim_{k\to +\infty}\int_0^{+\infty}(\chi')^{(k)}(t)e^{-tP(V+J^u)}( 1, e^{t\mathbf{P}^*}(f\nu)) dt
\\&=-\lim_{k\to +\infty}\int_0^{+\infty}(\chi')^{(k)}(t)( e^{tX}f, \nu) dt.
\end{align*}
We use the fact that $\nu$ is absolutely continuous with respect to $\mu_{V+J^u}$ with bounded density to get that $|\nu(e^{tX}f)|\leq C\mu_{V+J^u}(|e^{tX}f|)\leq C\mu_{V+J^u}(|f|)$ because $\mu_{V+J^u}$ is invariant. In particular, we get, by integrating:
$$\forall f\in C^{\infty}(\mathcal M),\quad  (\eta \times \nu)(|f|)\leq C'\mu_{V+J^u}(|f|) \ \Rightarrow \ (\eta \times \nu)\ll \mu_{V+J^u}. $$
This proves that the density of $(\mu \times \nu)$ is invariant by the flow and measurable for the equilibrium state $\mu_{V+J^u}$. But this last measure is known to be ergodic so the density is constant, which proves the two formulas.

\textbf{Weak mixing implies no other resonances on the critical axis.}
The fact that for a transitive Anosov flow, topological mixing is equivalent to not being a constant time suspension of an Anosov diffeomorphism is known as the Anosov alternative and can be found in \cite[Theorem 8.1.3]{FishHas}. Suppose now that the flow is topologically mixing. This is equivalent to being weakly mixing with respect to any equilibrium state, indeed, topological mixing implies weak mixing by a result of Bowen \cite[Theorem 7.3.6]{FishHas} and constant time suspensions are not weakly mixing for any equilibrium state. 

We first show that weak-mixing with respect to $\mu_{V+J^u}$ implies that there is no other resonance on $\mathcal C_0.$
From the averaging formula \eqref{eq:average}, we see that if $\nu$ gives zero measure to an invariant Borel set, then it is also the case for $\mu_{V+J^u}$ and the two measures are actually equivalent.

Next, we use \cite[Theorem VII.14]{ReSi} to see that the flow is weakly mixing if and only if the only eigenvalue is $1$ and it is a simple eigenvalue. In other words, if 
\begin{equation}
\label{eq:thrid2}
\begin{cases}
Xf=i\lambda f
\\ f\in L^2(\mathcal M,\mu_{V+J^u})
\end{cases}
\end{equation}
has no solution except for $\lambda=0$ and $f$ constant. Using Proposition \ref{measure on the axis}, the presence of another co-resonant state on the critical axis is equivalent to the existence of an absolutely continuous measure with respect to $\nu$. Its density with respect to $\nu$ is in $L^{\infty}(\mathcal M,\nu)=L^{\infty}(\mathcal M,\mu_{V+J^u})\subset L^2(\mathcal M,\mu_{V+J^u})$ and is thus a solution of the  system above. This shows that weak mixing of the flow with respect to $\mu_{V+J^u}$ implies that there is no other resonance on the critical axis. 

Conversely, suppose that the flow is not weak mixing with respect to $\mu_{V+J^u}$, i.e that \eqref{eq:thrid2} admits an $L^2$ solution. We can use \cite[Theorem 4.3]{Wal} for the Lie group $G=S^1$ which gives that the solution $h$ of \eqref{eq:thrid2} of  can be chosen to be smooth. But then, we see that $\gamma:=h\nu$ satisfies
$$\mathbf{P}^*\gamma=(P(V+J^u)+i\lambda)\gamma, \quad \mathrm{WF}(\gamma)\subset E_s^*, $$
that is, $P(V+J^u)+i\lambda$ is a resonance on $\mathcal C_0$.

\end{proof}
\section{Acting on the bundle of $d_s$-forms} 
\label{sec5}
We have studied the case of $0$-forms before and for this, we have fixed a Riemmanian volume $\mathrm{vol}$ to embed smooth functions into distributions. As a consequence, the co-resonant state $\nu$ from Theorem \ref{Construction} has a rather convoluted form. Note however that changing the metric does not change the critical axis as the pression of two cohomologic functions are the same. For $d_s$-forms, such an identification is not possible as the bundles involved are not line bundles anymore and we are led to adopt the more general and intrisic viewpoint of \emph{currents}. Loosely speaking, in this new formalism, a distribution is an element of the topological dual of $0$-forms. More generally, we will think of "distributions" on $d_s$-forms as continuous linear forms on $C^{\infty}(\mathcal M, \Lambda^{d_s}T^*\mathcal M)$ and will call this a $d_s$-\emph{current}. The construction of anisotropic Sobolev space is also valid\footnote{Smoothness is needed to apply the microlocal approach. Here, the bundles $E_s^*$ and $E_u^*$ are only Hölder continuous but $\mathscr E_0^k$ is smooth.} on this space, as noted before in Remark \ref{remm}. We will focus our attention on forms in the kernel of the contraction by the flow, they are given by
$$\mathscr E_0^k:=\{ u\in C^{\infty}(\mathcal M; \Lambda^k T^*\mathcal M) \mid \iota_{X}u=0\} =C^{\infty}(\mathcal M;\Lambda^k(E_u^*\oplus E_s^*)).$$
\subsection{Critical axis}
 
 More generally, the action of $\mathbf{P}$ will be extended, by duality, to \emph{currents}. This idea can be traced back at least to Ruelle and Sullivan although no spectral theory was involved in \cite{RuelleDavid1975Cfad} . We define a $k$-current $T$ as a continuous linear form on the space of smooth $k$-forms. On manifolds, especially when no canonical choice of smooth volume form is available, currents allow us to obtain an intrisic  definition of distributions. As such, the space of smooth $k$-forms is embedded canonically (up to choosing an orientation) in the space of $(n-k)$-currents by the formula
$$\forall \varphi \in C^{\infty}(\mathcal M;\Lambda^{k}T^*\mathcal M), \ \varphi : \alpha\in C^{\infty}(\mathcal M;\Lambda^{n-k}T^*\mathcal M) \mapsto \int_{\mathcal M} \alpha \wedge \varphi. $$ 
Note that the set of smooth $k$-forms is dense in the space of homogeneous currents of degree $n-k$, see \cite[Theorem 12 Section 15]{DeR} for a precise statement and more generally \cite[Chapter 3]{DeR} for an introduction to currents. In the rest of the paper, we will write $\mathcal D' (\mathcal M; \Lambda^{d_u}(E_u^*\oplus E_s^*))$ for the space of sections of currents of degree $d_s$ which are cancelled by the contraction $\iota_X$. They can be thought as linear combinations of elements of $\Lambda^{d_u}(E_u^*\oplus E_s^*)$ with distributional coefficients. We first introduce a useful (Hölder continuous) splitting;
\begin{equation}
\label{eq:split}
\Lambda^{q}(E_u^*\oplus E_s^*)=\bigoplus_{k=0}^{q}\big(\Lambda^kE_s^*\wedge \Lambda^{q-k}E_u^*\big)=:\bigoplus_{k=0}^{q} \Lambda_k^{q}.
\end{equation}
We prove that the system of (unstable) leaf measures $\{m^u_x\mid  x\in \mathcal M\}$ defines a section of currents of Sobolev regularity $\mathcal H^s (\mathcal M; \Lambda^{d_s}(E_u^*\oplus E_s^*))$ and is actually a co-resonant state for $\lambda=P(V)$. This point reduces to a computation of the wavefront set of the system of leaf measures $m^u_V$ by \eqref{eq:resonant}. This is done by adapting the argument of Lemma \ref{Wavefront set} and we note that the proof is actually easier in this case as we do not have to deal with the smoothness of the unstable Jacobian $J^u$ along the leaves.  We will then use the "$L^1$-norm" associated to $m^u_V$ to prove that no Ruelle resonances exist in the half plane $\{\mathrm{Re}(\lambda)>P(V)\}$ by mimicking the proof of Lemma \ref{negative}. 

However, because we work on forms, the co-resonant state $m^u_V$ will be non zero only when tested on sections with values in $\Lambda_0^{d_s}$ and will fail to define a norm on the rest of the decomposition. This will explain the need of a more complicated norm, which we will construct in Proposition \ref{realnorm}.

More importantly, the decomposition \eqref{eq:splitt} is only Hölder continuous which prevents us from applying the microlocal strategy to obtain a meromorphic extension of the resolvent acting on sections with values in $\Lambda_k^{d_s}$. 

Indeed, in general, one can only expect to have Hölder continuous section  $C^{\alpha}(\mathcal M: \Lambda_k^{d_s})$ for some $\alpha>0$. The pairing against $m^u_V$ will however still be justified because the co-resonant state is of order $0$.

\begin{prop}
\label{muv}
The system of leaf measures $\{m^u_x\mid  x\in \mathcal M\}$ from \eqref{eq:measure} defines a section of $\mathcal D' (\mathcal M; \Lambda^{d_u}(E_u^*\oplus E_s^*))$ which we will denote by $m^u_V$. Moreover, one has
\begin{equation}
\label{eq:eee}
\mathcal L_{\mathbf{P}^*}m^u_V=P(V)m^u_V, \quad \mathrm{WF}(m^u_V)\subset E_s^*, 
\end{equation}
thus $m^u_V\in \mathcal H^s (\mathcal M; \Lambda^{d_u}(E_u^*\oplus E_s^*))$ and it is a co-resonant state associated to the Ruelle resonance $P(V)$. We will call $P(V)$ the first resonance.

Moreover, for any $1\leq k\leq d_s$ and $\omega_k \in C^{\alpha}(\mathcal M; \Lambda_k^{d_s})$, one has $m^u_V(\omega_k)=0$.
\end{prop}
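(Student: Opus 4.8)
The plan is to follow the strategy of Theorem~\ref{Construction} and Lemma~\ref{Wavefront set}, with the simplification that no product construction is needed and the unstable Jacobian never enters. Fix an orientation of $E_u$; since $E_s^{*}$ is the annihilator of $E_s\oplus\mathbb RX$ it has dimension $d_u$ and restricts to the dual of $E_u$, so $\Lambda^{d_u}E_s^{*}=\Lambda^{d_u}_{d_u}$ is an oriented line bundle carrying the $E_u$-volume form. Locally in a product chart $R_q$ (see \eqref{eq:R}) I would define $m^u_V$ by integrating over the weak-stable transversal the currents of integration over the local unstable leaves $\mathcal W^u(y,\delta)$ against the leaf measures $m^u_y$, tensored with the $E_u$-volume form, using the conditional density of the Riemannian volume along the unstable foliation (\cite[Prop.~1.6, Lemma~1.9]{GBGW}) for the transversal measure. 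The support and compatibility properties of $\{m^u_x\}$ recalled after \eqref{eq:measure}, the local product structure, and a partition of unity make this independent of the choices and glue the local data to a global section of $\mathcal D'(\mathcal M;\Lambda^{d_u}(E_u^{*}\oplus E_s^{*}))$; this is the exact analogue of the compatibility step in the proof of Theorem~\ref{Construction}. The current is nonzero, being strictly positive on any non-negative, not identically zero section of $\Lambda^{d_s}_0=\Lambda^{d_s}E_u^{*}$, for which the pairing reduces to integrating a positive function against the positive measures $m^u_y$.

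The vanishing $m^u_V(\omega_k)=0$ for $\omega_k\in\Lambda_k^{d_s}$ with $1\le k\le d_s$ is then formal: by construction $m^u_V$ is a distributional section of $\Lambda^{d_u}E_s^{*}$, so for $\omega_k\in\Lambda^kE_s^{*}\wedge\Lambda^{d_s-k}E_u^{*}$ the wedge $m^u_V\wedge\omega_k$ lies in $\Lambda^{d_u+k}E_s^{*}\wedge\Lambda^{d_s-k}E_u^{*}=0$ since $d_u+k>d_u=\dim E_s^{*}$, whence $m^u_V(\omega_k)=\int_{\mathcal M}m^u_V\wedge\omega_k\wedge\alpha=0$. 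Equivalently, integration along a $d_u$-dimensional unstable leaf only sees the $\Lambda^{d_s}E_u^{*}$-component of the test form, a fact already used implicitly in the gluing step.

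For the equivariance $\mathcal L_{\mathbf P^{*}}m^u_V=P(V)m^u_V$, I would check that $\langle e^{t\mathbf P}\omega,m^u_V\rangle=e^{tP(V)}\langle\omega,m^u_V\rangle$ for $\omega\in\mathscr E_0^{d_s}$. Writing $e^{t\mathbf P}\omega=\exp\big(\int_0^tV(\varphi_{-s}x)\,ds\big)\varphi_{-t}^{*}\omega$, the point — and the reason $J^u$ does not appear, unlike in the function case of Theorem~\ref{Construction} — is that $\varphi_{-t}^{*}$ now acts on a genuine $d_u$-form along the leaves: after the substitution $z=\varphi_tw$ the pullback and the change of variables for differential forms cancel with no Jacobian, leaving only the weight $e^{S_tV}$, which cancels against the $\varphi_t$-conformality factor $e^{tP(V)-S_tV}$ of \eqref{eq:change of variable}. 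The bookkeeping on the weak-stable transversal is handled as in the proof of Theorem~\ref{Construction}, using holonomy invariance \eqref{eq:COVpi} with $w^{-}_V$, and produces the global factor $e^{tP(V)}$.

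The main step, and the main obstacle, is the wavefront bound $\mathrm{WF}(m^u_V)\subset E_s^{*}$. Let $q\in\mathcal M$ and $\xi\notin E_s^{*}(q)$, i.e.\ $\xi$ does not vanish on $T_q\mathcal W^{ws}(q)=E_s(q)\oplus\mathbb RX(q)$, and test $m^u_V$ against $\chi e^{iS/h}$ with $\chi$ supported near $q$ and $d_qS=\xi$, as in Lemma~\ref{Wavefront set}. Disintegrating over the unstable foliation, one writes the transversal (weak-stable) integration against a conditional measure that is smooth along the weak-stable leaves — the relevant regularity being that of \cite[Theorem~3.9]{Lla} — at the cost of holonomy densities $e^{w^{\pm}_V}$ which are themselves smooth along those leaves by (the proof of) \cite[Corollary~4.4]{Lla}, since $V$ is smooth; the point is that only $w^{\pm}_V$ and this conditional are needed, and never a separate, delicate treatment of the merely Hölder continuous $J^u$ along leaves, which is why the argument is easier than in Lemma~\ref{Wavefront set}. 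Since $d_qS$ does not vanish on the tangent space to the (smooth) weak-stable leaf through $q$, integration by parts along that leaf shows the oscillatory integral is $O(h^{\infty})$, so $(q,\xi)\notin\mathrm{WF}(m^u_V)$. Once this is known, $m^u_V\in\mathcal H^s(\mathcal M;\Lambda^{d_u}(E_u^{*}\oplus E_s^{*}))$ for every $s>0$, and by the characterization \eqref{eq:resonant} applied to $\mathbf P^{*}$ it is a nonzero co-resonant state, so $P(V)$ is a Ruelle resonance for the action on $d_s$-forms. The conformality computation is routine and the vanishing on $\Lambda^{d_s}_k$ is purely formal; it is the disintegration-and-integration-by-parts in the wavefront step — identifying the transversal part of $m^u_V$ with a measure smooth along weak-stable leaves up to smooth holonomy factors, and making the integration by parts uniform — that carries the real technical weight.
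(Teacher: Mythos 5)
Your overall architecture is the right one (local construction from the leaf measures and gluing; formal vanishing on $\Lambda_k^{d_s}$ for $k\ge 1$; equivariance with "no Jacobian"; integration by parts along weak-stable leaves for the wavefront bound), and the vanishing argument and the no-Jacobian observation are correct. But the construction of the current itself, which is the heart of the statement, is not the one that works. The paper defines $m^u_V$ locally by
\begin{equation*}
m^u_V(\varphi)=\int_{\mathcal W^u(q,\delta)}\left(\int_{R_q^{ws}(x)}e^{w^+_V(y,x)}\,(\varphi\wedge\alpha)(y)\right)dm^u_{q,V}(x),
\end{equation*}
i.e.\ the test $(d_s+1)$-form $\varphi\wedge\alpha$ is integrated over the \emph{weak-stable} plaques (so $m^u_V$ is a current of integration over weak-stable leaves, which is why its wavefront set lands in $E_s^*$), the transversal measure is the unstable leaf measure $m^u_{q,V}$ itself, and the only density is the holonomy cocycle $e^{w^+_V(y,x)}$. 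You transpose the two foliations: you integrate over the unstable leaves against $m^u_y$ and take for the weak-stable transversal measure the conditional density of the Riemannian volume along the unstable foliation, and the weight $e^{w^+_V}$ never enters your definition (it appears only later, and with the wrong sign: the relevant holonomies here are weak-stable holonomies between unstable leaves, governed by $w^+_V$, not $w^-_V$).

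This is not a cosmetic difference. First, compatibility of the local definitions across overlapping rectangles is not a consequence of the leaf-compatibility recalled after \eqref{eq:measure} (which only compares $m^u_{x_1}$ and $m^u_{x_2}$ on a common leaf); it requires the holonomy change of variables \eqref{eq:COVpi}, whose density is exactly $e^{w^+_V}$ -- without it the local data do not glue. Second, the Riemannian conditional is not equivariant with the cocycle $e^{S_tV-tP(V)}$: pushing your transversal measure by $\varphi_{-t}$ produces Jacobian factors tied to $J^u$/$J^s$ rather than the factor $e^{w^+_V(\varphi_t x,x)}=e^{S_tV(x)-tP(V)}$ coming from \eqref{eq:floww} and \eqref{eq:cocycle}, so your object would not satisfy $\mathcal L_{\mathbf P^*}m^u_V=P(V)m^u_V$. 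Third, your wavefront argument routes through the smoothness along weak-stable leaves of the volume/SRB conditionals, which re-imports precisely the delicate leafwise regularity of $J^u$ that you claim (correctly, for the right construction) to avoid; with the formula above the only leafwise density is $e^{w^+_V}$, smooth along leaves because $V$ is smooth, and the integrand is otherwise the smooth form $\varphi\wedge\alpha$ -- that is exactly why this case is easier than Lemma \ref{Wavefront set}.
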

\begin{proof}
Our first goal is to give meaning to $m^u_V(\varphi)$ for $\varphi\in C^{\infty}(\mathcal M; \Lambda^{d_s}T^*\mathcal M)$. First, the compatibility statement allows us to only define the duality locally, so let $\varphi\in C^{\infty}(\mathcal M; \Lambda^{d_s}T^*\mathcal M)$ be supported in $R_q$. We can define the duality as follows:
\begin{equation}
\label{eq:muv}
m^u_V(\varphi):=\int_{\mathcal M}m^u_V\wedge \alpha \wedge \varphi=\int_{\mathcal W^u(q,\delta)}\left( \int_{R_q^{ws}(x)}e^{w_V^+(y,x)}(\varphi\wedge \alpha)(y)\right) dm^u_V(x),
\end{equation}
where $\ \alpha\in E_0^*, \ \alpha(X)=1$.
We see that the previous definition makes sense as $\varphi\wedge \alpha$ is a $d_s+1$ form and as $w_V^+(x,y)$ is smooth in $y$, in the sense of Lemma \ref{Wavefront set}, and continuous in $x$. The formula clearly defines a current of degree $0$ so $m^u_V\in D' (\mathcal M; \Lambda^{d_u}(E_u^*\oplus E_s^*))$.

Let $\omega_k \in C^{\alpha}(\mathcal M: \Lambda_k^{d_s})$ for some $k$. Then the fact that $m^u_V$ is a measure on $\mathcal W^u(q,\delta)$ shows that the pairing $m^u_V(\omega_k)$ is well defined. If $k\geq 1$, we can use \eqref{eq:muv} and the definition of $E_s^*$ to get that $m^u_V(\omega_k)=0$.

To prove the first part of \eqref{eq:eee}, we can consider a $d_s$ form $\varphi$ supported in $R_q$ such that $e^{-t\mathbf{P}}\varphi$ is also supported in $R_q$. We have to prove that  $m^u_V(e^{t\mathbf{P}}\varphi)=e^{tP(V)}m^u_V(\varphi)$. We first use the Leibniz rule to get 
$\mathcal L_X(\varphi \wedge \alpha)=\mathcal L_X\varphi \wedge \alpha+\varphi \wedge \mathcal L_X\alpha=\mathcal L_X\varphi \wedge \alpha. $
This allows us to compute explictly the action of the propagator on the current $m^u_V$.
 More precisely, we use  the cocycle relation \eqref{eq:cocycle}, equation \eqref{eq:floww} as well as $\varphi_t$ conformality  \eqref{eq:change of variable} :
\begin{align*}
m^u_V(e^{t\mathbf{P}}\varphi)&=\int_{\mathcal W^u(q,\delta)}\left( \int_{R_q^{ws}(x)}e^{w_V^+(y,x)}e^{S_tV(\varphi_{-t}y)}e^{t\mathcal L_{{X}}}\big(\varphi \wedge \alpha\big)(y)\right) dm^u_V(x)
\\&=\int_{\mathcal W^u(q,\delta)}\left( \int_{R_q^{ws}(\varphi_tx )}e^{w_V^+(\varphi_t w,x)}e^{S_tV(w)}(\varphi \wedge \alpha)(w)\right) dm^u_V(x)
\\&=\int_{\mathcal W^u(q,\delta)}e^{w_V^+(\varphi_t x,x)}\left( \int_{R_q^{ws}(\varphi_t x )}e^{w_V^+(\varphi_t w,w)}e^{w_V^+(w,\varphi_tx)}e^{S_tV(w)}(\varphi \wedge \alpha)(w)\right) dm^u_V(x)
\\&=\int_{\mathcal W^u(q,\delta)}e^{2tP(V)}e^{-S_tV(x)}\left( \int_{R_q^{ws}(\varphi_t x )}e^{w_V^+(w,\varphi_tx)}(\varphi \wedge \alpha)(w)\right) dm^u_V(x)
\\&=e^{tP(V)}\int_{\mathcal W^u(q,\delta)}\left(\int_{R_q^{ws}(z )} e^{w_V^+(z,w)} (\varphi \wedge \alpha)(z)\right)dm^u_V(z)=e^{tP(V)}m^u_V(\varphi).
\end{align*}
For the wavefront set condition, we mimick the argument of Lemma \ref{Wavefront set} and consider a smooth $d_s$-form $\chi$ supported in $R_q$ and $S$ a phase function such that $dS(q)=\xi \notin E_s^*$ and compute
$$ m^u_V(e^{i\frac S h}\chi)=\int_{\mathcal W^u(q,\delta)}\left( \int_{R_q^{ws}(x)}e^{w_V^+(y,x)}e^{i\frac{S(y)} h}(\chi\wedge \alpha)(y)\right) dm^u_V(x).$$
Now, the proof is easier than for Lemma  \ref{Wavefront set} as the integrand is easily seen to be smooth along the weak-stable leaves (because the potential $V$ is smooth) uniformly in $x$. We can perform integration by parts and show that the integrand is $O(h^{\infty})$ as long as $dS$ does not vanish on $R_q^{ws}(x)$, which can be ensured near $q$ by the definition of $E_s^*$. This shows that $\xi \notin \mathrm{WF}(m^u_V)$ and thus $ \mathrm{WF}(m^u_V)\subset E_s^*$. This shows that $P(V)$ is a Ruelle resonance with the associated Ruelle co-resonant state given by $m^u_V$.
\end{proof} 
If we consider the adjoint $\mathbf{P}^*$, the above theorem would give that $m^s_V$ is a resonant state for $P(V)$. We also get a product construction of the equilibrium measure. In this case, it is actually easier than in the case of functions as the product is given by the usual wedge product (extended to distributions with convenient wavefront sets). The following lemma is a re-writing of \cite[Theorem 3.10]{Cli} and corresponds to the second part of Theorem \ref{equilibriumtheo}.
\begin{lemm}[Equilibrium state as a product] There exists $c>0$ such that
\label{lemmm}
$$ m^u_V\wedge \alpha \wedge m^s_V=c\mu_V.$$
\end{lemm}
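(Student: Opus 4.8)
The plan is to show that $m^u_V\wedge\alpha\wedge m^s_V$ is a well-defined, non-negative, finite and flow-invariant measure, and then to identify it with $\mu_V$ up to a positive constant by means of Climenhaga's product construction. First I would make sense of the wedge product: since $\mathrm{WF}(m^u_V)\subset E_s^*$ and $\mathrm{WF}(m^s_V)\subset E_u^*$, and $E_s^*\cap E_u^*\cap(T^*\mathcal M\setminus\{0\})=\emptyset$, Hörmander's theorem on products of distributions shows that $m^u_V\wedge\alpha\wedge m^s_V$ is a well-defined $n$-current, i.e.\ a distribution on $\mathcal M$. By compatibility of the systems of leaf measures it suffices to describe it in a single flow box $R_q$ thickened along $X$: there $m^u_V$ and $m^s_V$ restrict to genuine positive measures along the (weak-)unstable and stable leaves, $\alpha$ supplies the positive Lebesgue factor in the flow direction, and unwinding the iterated integral --- using the local formula \eqref{eq:muv} together with the holonomy change of variables \eqref{eq:COVpi} applied to $m^s_V$ --- exhibits the pairing with a non-negative $f\in C^\infty(\mathcal M)$ as a non-negative number. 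Compactness of $\mathcal M$ then gives finiteness.

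Next I would prove flow-invariance. By the Leibniz rule,
\[
\mathcal L_X(m^u_V\wedge\alpha\wedge m^s_V)=(\mathcal L_X m^u_V)\wedge\alpha\wedge m^s_V+m^u_V\wedge(\mathcal L_X\alpha)\wedge m^s_V+m^u_V\wedge\alpha\wedge(\mathcal L_X m^s_V).
\]
The eigen-relations \eqref{eq:system2} (which follow from Proposition \ref{muv} applied to $\mathbf P$ and to its adjoint) read $\mathcal L_X m^u_V=(P(V)-V)m^u_V$ and $\mathcal L_X m^s_V=(V-P(V))m^s_V$, so the first and third terms cancel. For the middle term, Cartan's formula together with $\iota_X\alpha\equiv 1$ gives $\mathcal L_X\alpha=\iota_X d\alpha$, a $1$-form that annihilates $X$; since $m^u_V\wedge m^s_V$ already takes values in $\Lambda^{d_u+d_s}(E_u^*\oplus E_s^*)$, wedging it with a $1$-form having no $E_0^*$-component yields $0$. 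Hence $\mathcal L_X(m^u_V\wedge\alpha\wedge m^s_V)=0$, so the measure is flow-invariant.

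Finally I would identify the measure with $\mu_V$. One route is to observe that, after unwinding the wedge product in a flow box as above, $m^u_V\wedge\alpha\wedge m^s_V$ restricted to $R_q$ coincides with the local product measure constructed in \cite[Theorem 3.10]{Cli}: the holonomy cocycles $e^{w_V^{\pm}}$ arising when transporting leaf measures between nearby leaves are exactly the ones appearing there, and that theorem asserts the product equals a positive multiple of $\mu_V$; the local identifications then patch by compatibility of the leaf-measure systems. A more self-contained variant is to verify the Gibbs bound
\[
\exists C>0,\ \forall t\geq 0,\ \forall q\in\mathcal M,\qquad C^{-1}\leq (m^u_V\wedge\alpha\wedge m^s_V)\big(B_t(q,\delta)\big)\,e^{tP(V)-S_tV(q)}\leq C,
\]
using a uniform local product description of the Bowen balls $B_t(q,\delta)$ and the Gibbs-type estimates for $m^u_V$ and $m^s_V$ on the leaves from \cite{Cli}, and then invoking the characterisation \eqref{eq:Gibbs} and uniqueness of the equilibrium state. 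In either case $m^u_V\wedge\alpha\wedge m^s_V$ is a finite, flow-invariant measure that is positive on non-empty open sets, hence absolutely continuous with respect to the ergodic measure $\mu_V$ with a flow-invariant --- and therefore constant --- density, so $m^u_V\wedge\alpha\wedge m^s_V=c\mu_V$ with $c>0$. The main obstacle is the bookkeeping in this last step: reconciling the distributionally defined wedge product with Climenhaga's measure-theoretic product requires tracking the holonomy cocycles $w_V^{\pm}$ that relate leaf measures on different leaves and the normalisation of the $\alpha$-factor along the flow; in the Gibbs-estimate variant the analogous difficulty is obtaining the two-sided bound uniformly in $t$ and $q$, which needs uniform control of these holonomy factors and of the product structure of Bowen balls.
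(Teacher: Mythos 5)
Your proposal is correct and is essentially the paper's route: the paper gives no independent proof of Lemma \ref{lemmm}, stating only that it is a re-writing of \cite[Theorem 3.10]{Cli}, and your first identification route is exactly that citation made explicit, while the well-definedness of the wedge via the disjoint wavefront sets, the upper Gibbs bound giving absolute continuity, and the ``invariant density of an ergodic measure is constant'' endgame mirror what the paper does for $\eta\times\nu$ in Proposition \ref{nothard}. Two minor points. First, $\alpha$ is only H\"older continuous (its kernel contains $E_u\oplus E_s$), so Cartan's formula is not literally available; it is cleaner to observe that $\varphi_t^*\alpha=\alpha$ by flow-invariance of $X$ and of $E_u\oplus E_s$, whence $\mathcal L_X\alpha=0$ directly, or to replace $\alpha$ by any smooth $\beta$ with $\beta(X)=1$, the difference $\beta-\alpha$ being a section of $E_u^*\oplus E_s^*$ and hence annihilated upon wedging with $m^u_V\wedge m^s_V$. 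Second, in your closing sentence absolute continuity with respect to $\mu_V$ does not follow from positivity on open sets but from the upper Gibbs estimate you established earlier; as written the ``hence'' is a non sequitur, though the intended argument is sound.
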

 
Similarly to the case of functions, we now prove that $\{\mathrm{Re}(\lambda)=P(V)\}$ is the critical axis by using the "$L^1$-norm" associated to the co-resonant state $m^u_V$. Nevertheless, because we work on forms, the following norm will only be well defined for section with values in $ \Lambda_0^{d_s}$.
\begin{lemm}
\label{Crit}
We define a norm on $C^{0}(\mathcal M; \Lambda_0^{d_s})$ by posing
\begin{equation}
\label{eq:newnorm2}
\forall \varphi \in  C^{0}(\mathcal M; \Lambda_0^{d_s}), \ \|\varphi\|_{V,0}:= m^u_V(|\varphi|).
\end{equation}
This norm satisfies the bound
\begin{equation}
\label{eq:bbound}
\forall  \varphi \in  C^{0}(\mathcal M; \Lambda_0^{d_s}), \ \|e^{t\mathbf{P}}\varphi\|_{V,0}\leq e^{tP(V)}\|\varphi\|_{V,0}.
\end{equation}

\end{lemm}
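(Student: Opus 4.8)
The plan is to handle the three points in turn — finiteness and well-definedness of $\|\cdot\|_{V,0}$, the norm axioms, and the contraction estimate \eqref{eq:bbound} — reusing the local description \eqref{eq:muv} of $m^u_V$ and its $\varphi_t$-conformality established in Proposition \ref{muv}. By Assumption \ref{assumption} the bundle $E_u^*$ is orientable, so I fix an orientation and a positively oriented continuous generating section $\omega_0$ of the line bundle $\Lambda_0^{d_s}=\Lambda^{d_s}E_u^*$; every $\varphi\in C^0(\mathcal M;\Lambda_0^{d_s})$ is then $\varphi=f\omega_0$ with $f\in C^0(\mathcal M)$, and I set $|\varphi|:=|f|\,\omega_0\in C^0(\mathcal M;\Lambda_0^{d_s})$. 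Choosing the sign of $\alpha$ so that $\omega_0\wedge\alpha$ is a positive density on the weak-stable plaques $R_q^{ws}(x)$, formula \eqref{eq:muv} applied to $|\varphi|$ has a non-negative continuous integrand integrated against the positive finite leaf measures $m^u_x$ (part of Climenhaga's construction, recalled for $\nu$ in Theorem \ref{Construction}); compatibility of the local measures and compactness of $\mathcal M$ patch these into a well-defined number $m^u_V(|\varphi|)\ge 0$, finite because $m^u_V(|\varphi|)\le\|\varphi\|_{C^0}\,m^u_V(|\omega_0|)<\infty$, the last quantity being finite since $m^u_V$ is a current of order $0$ on a compact manifold.

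For the norm axioms, homogeneity is immediate and the triangle inequality follows from the pointwise bound $|\varphi+\psi|\le|\varphi|+|\psi|$ together with positivity of the integrand in \eqref{eq:muv}. For definiteness, if $\|\varphi\|_{V,0}=0$ but $\varphi\neq 0$, then $|\varphi|$ is bounded below on some non-empty open set $U$; but \eqref{eq:measure} shows, exactly as in the proof of Lemma \ref{new}, that each $m^u_x$ charges every non-empty relatively open subset of $\mathcal W^u(x,\delta)$, so the product structure in \eqref{eq:muv} forces $m^u_V(|\varphi|)\ge m^u_V(\mathbf{1}_U|\varphi|)>0$, a contradiction.

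The contraction estimate is the heart of the matter. First, $e^{t\mathbf P}$ preserves $C^0(\mathcal M;\Lambda_0^{d_s})$: writing $e^{t\mathbf P}\varphi=e^{S_tV(\varphi_{-t}\cdot)}\,\varphi_{-t}^*\varphi$, the kernel of $\iota_X$ is preserved because $X$ is flow-invariant, and $\varphi_{-t}^*$ maps $\Lambda^{d_s}E_u^*$ into itself because $E_u$ is flow-invariant (a $d_s$-form annihilating $E_u\oplus\mathbb RX$ is precisely an element of $\Lambda^{d_s}E_u^*$). Hence $\varphi_{-t}^*\omega_0=\rho_t\,\omega_0$ for a unique real function $\rho_t$, continuous in $(t,x)$, which is positive since $\rho_0\equiv1$, $t\mapsto\rho_t(x)$ is continuous and $\varphi_{-t}^*$ is invertible on the line; combined with $e^{S_tV}>0$ this yields the pointwise identity $|e^{t\mathbf P}\varphi|=e^{t\mathbf P}|\varphi|$ in $C^0(\mathcal M;\Lambda_0^{d_s})$. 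It then suffices to note that the eigenrelation $m^u_V(e^{t\mathbf P}\psi)=e^{tP(V)}m^u_V(\psi)$, proved in Proposition \ref{muv} for $\psi\in C^\infty(\mathcal M;\Lambda^{d_s}T^*\mathcal M)$, extends to all $\psi\in C^0(\mathcal M;\Lambda_0^{d_s})$: $m^u_V$ has order $0$, \eqref{eq:muv} only depends on the $\Lambda_0^{d_s}$-component of its argument, and one approximates $\psi$ uniformly by smooth $d_s$-forms $\psi_n$, for which $e^{t\mathbf P}\psi_n\to e^{t\mathbf P}\psi$ uniformly. Applying this to $\psi=|\varphi|$ gives
\[
\|e^{t\mathbf P}\varphi\|_{V,0}=m^u_V\big(|e^{t\mathbf P}\varphi|\big)=m^u_V\big(e^{t\mathbf P}|\varphi|\big)=e^{tP(V)}\,m^u_V(|\varphi|)=e^{tP(V)}\|\varphi\|_{V,0},
\]
which is in fact an equality, a fortiori \eqref{eq:bbound}. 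The only real obstacle is bookkeeping: identifying the pairing of the current $m^u_V$ with $|\varphi|$ coherently with the local representation \eqref{eq:muv} and checking that passing to absolute values is compatible with the orientations and with $\varphi_t$-equivariance; everything else reduces to positivity of the explicit weights $e^{w_V^+}$, $e^{S_tV}$ and the Jacobian $\rho_t$.
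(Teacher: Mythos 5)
Your proposal is correct and follows essentially the same route as the paper's (much terser) proof: the paper likewise uses that $\Lambda_0^{d_s}$ is an orientable line bundle to make sense of $|\varphi|$, that $m^u_V$ charges every non-empty open set for definiteness, and that \eqref{eq:bbound} is a direct consequence of the eigenrelation $m^u_V(e^{t\mathbf P}\cdot)=e^{tP(V)}m^u_V(\cdot)$ from Proposition \ref{muv}. You simply make explicit the steps the paper leaves implicit (positivity of the Jacobian $\rho_t$, the identity $|e^{t\mathbf P}\varphi|=e^{t\mathbf P}|\varphi|$, and the order-$0$ density argument extending the eigenrelation to continuous sections), and you correctly observe that the bound is in fact an equality.
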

\begin{proof}
Suppose $\varphi \in  C^{0}(\mathcal M;\Lambda^{d_s}_0)$, the bundle is one dimensional and it thus makes sense to talk of $|\varphi|\wedge \alpha$ as a $d_s+1$ density. If $\varphi(q)\neq 0$ then by continuity, $\varphi\neq 0$ on a small open set. Then $\|\varphi \|_{V,0}>0$ because $m^u_V$ gives a positive measure to any open set. The bound \eqref{eq:bbound} follows from the fact that $m^u _V$ is a co-resonant state. The last point follows from the first two points by a direct adaptation of the proof of Lemma \ref{negative}.
\end{proof}

We now use this norm and a "shift" to define inductively a norm on $C^0(\mathcal M; \Lambda_k^{d_s})$ for any $k$.
Consider the set of all finite covers by open sets which trivialize $E_s$ and $E_u$:
$$\mathcal C:=\{\mathcal U:=(U_j)_{1\leq j \leq n}\mid  \mathcal M=\cup_{j=1}^n U_j, \ U_j \text{ open and } E_s \text{ and } E_u \text{ are trivial on } U_j\}. $$
For any $\mathcal U\in \mathcal C$, let $\mathcal P(\mathcal U)$ be the set of partition of unity $\chi_j$ associated to the cover $\mathcal U$. Finally, we define the set of (normalized) local trivialization of $E_u$:
$$\mathscr V^u(\mathcal U):=\{ (X^j_{u,h})_{1\leq j \leq d_u}\in C^0(U_h; E_u) \mid (E_u)_{|U_h}=\mathrm{Span}\{X^j_{u,h}\}_{1\leq j \leq d_u}, \ \| X^j_{u,h}\|_{C^0}=1\},$$ 
and its dual conterpart
$$\mathscr F^u(\mathcal U):=\{ (Y^j_{u,h})_{1\leq j \leq d_s}\in C^0(U_h; E_u^*) \mid (E_u^*)_{|U_h}=\mathrm{Span}\{Y^j_{u,h}\}_{1\leq j \leq d_s}, \ \| Y^j_{u,h}\|_{C^0}=1\}.$$ 
\begin{prop}
\label{realnorm}
We define a norm inductively on $C^0(\mathcal M; \Lambda_{k}^{d_s}), k\geq 1$, by posing, for $f \in  C^{0}(\mathcal M; \Lambda_k^{d_s})$
\begin{equation}
\label{eq:T}
   \|f\|_{V,k}:= \sup_{\mathcal U\in \mathcal C}\sup_{(\chi_j)\in \mathcal P(\mathcal U)}\max_{h=1}^n \sup_{(X^j_{u,h})\in \mathscr V^u}\sup_{(Y^i_{u,h})\in \mathscr F^u}\sum_{i=1}^{d_s}\sum_{j=1}^{d_u}\|\chi_h\iota_{X_{u,h}^j}f\wedge Y_{u,h}^{i}\|_{V,k-1}.
\end{equation}
This norm satisfies the bound
\begin{equation}
\label{eq:bbbound}
\forall  \varphi \in  C^{0}(\mathcal M; \Lambda_k^{d_s}), \ \|e^{t\mathbf{P}}\varphi\|_{V,k}\leq C e^{t(P(V)-k\eta)}\|\varphi\|_{V,k}
\end{equation}
for some $C,\eta>0$. As a consequence, there are no Ruelle resonances in $\{\mathrm{Re}(\lambda)>P(V)\}$ and if $\lambda$ is a Ruelle resonance on the critical axis, then it has no Jordan block.
\end{prop}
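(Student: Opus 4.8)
The plan is to proceed by induction on $k$, the base case $k=0$ being Lemma \ref{Crit}. Fix $k\geq 1$ and assume the norm $\|\cdot\|_{V,k-1}$ on $C^0(\mathcal M;\Lambda_{k-1}^{d_s})$ is already defined and satisfies the bound \eqref{eq:bbbound} with exponent $P(V)-(k-1)\eta$. The first point is to check that \eqref{eq:T} genuinely defines a norm: positive homogeneity and the triangle inequality are immediate from the corresponding properties of $\|\cdot\|_{V,k-1}$ and the fact that the suprema and sums preserve them; definiteness follows because if $f(q)\neq 0$ for some $q$, then since $\Lambda_k^{d_s}=\Lambda^kE_s^*\wedge\Lambda^{d_s-k}E_u^*$, one can contract $f$ against $d_u$ suitably chosen vectors $X^j_{u,h}\in E_u$ and wedge with $d_s$ covectors $Y^i_{u,h}\in E_u^*$ spanning $E_u^*$ locally so as to produce a section of $\Lambda_{k-1}^{d_s}$ that is nonzero near $q$ (choosing a partition of unity with $\chi_h\equiv 1$ near $q$), making at least one summand in \eqref{eq:T} strictly positive by the induction hypothesis. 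The finiteness of the supremum follows from the compactness of $\mathcal M$, the fact that all the trivializing frames are normalized in $C^0$, and the continuity of $f$.

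The heart of the matter is the propagation bound \eqref{eq:bbbound}. Here the key computation is to understand how $e^{t\mathbf P}$ interacts with the contraction-and-wedge operations appearing in \eqref{eq:T}. Writing $\varphi\in C^0(\mathcal M;\Lambda_k^{d_s})$, one has $e^{t\mathbf P}\varphi = e^{S_tV\circ\varphi_{-t}}\,\varphi_{-t}^*\varphi$. The point is that $\varphi_{-t}^*$ maps $E_u^*$ into $E_u^*$ and expands it: more precisely, for $\xi_u\in E_u^*$ one has $|\varphi_{-t}^*\xi_u|\leq Ce^{-\theta t}|\xi_u|$ for $t\leq 0$, equivalently $|(d\varphi_{-t})^T\xi_u|\geq C^{-1}e^{\theta t}|\xi_u|$ for $t\geq 0$, while $(d\varphi_{-t})$ contracts $E_u$ by $e^{-\theta t}$. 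When one contracts $e^{t\mathbf P}\varphi$ against a normalized frame $X_{u,h}^j$ of $E_u$ and wedges with a normalized frame $Y_{u,h}^i$ of $E_u^*$, pulling the $\varphi_{-t}^*$ through these operations replaces the frames by $(d\varphi_{-t})X_{u,h}^j$ (which has norm $\lesssim e^{-\theta t}$) and $\varphi_{-t}^*$-pushforwards of dual frames, each such replacement producing a gain of $e^{-\theta t}$; after renormalizing to unit frames as required in \eqref{eq:T}, one finds
$$
\chi_h\,\iota_{X_{u,h}^j}(e^{t\mathbf P}\varphi)\wedge Y_{u,h}^i \;=\; (\text{bounded }C^0\text{ factor})\cdot O(e^{-\theta t})\cdot e^{t\mathbf P}\bigl(\chi_h'\,\iota_{\tilde X^j}\varphi\wedge \tilde Y^i\bigr)
$$
for appropriate rescaled frames and cutoffs lying in the relevant admissible sets. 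Applying the induction hypothesis \eqref{eq:bbbound} at level $k-1$ to the inner section then yields a bound $Ce^{-\theta t}\cdot e^{t(P(V)-(k-1)\eta)}\|\varphi\|_{V,k}$; taking $\eta\leq\theta$, summing over $i,j$ and over the (finitely many, compactness) charts, and taking the suprema over admissible frames and partitions gives \eqref{eq:bbbound} at level $k$.

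Finally, the consequences: exactly as in the proof of Lemma \ref{negative}, the contraction estimate \eqref{eq:bbbound} shows that for $\mathrm{Re}(\lambda)>P(V)$ the remainder $\tilde R(\lambda)$ of the parametrix \eqref{eq:parametrix} has operator norm $<\tfrac12$ on the completion of $C^0(\mathcal M;\Lambda_k^{d_s})$ for $\|\cdot\|_{V,k}$, hence $F(\lambda)=\mathrm{Id}-\tilde R(\lambda)$ is invertible there and $\Pi_0(\lambda)=0$, so $\lambda$ is not a resonance; since this holds for all $k$ between $0$ and $d_s$ and $\mathscr E_0^{d_s}=\bigoplus_k\Lambda_k^{d_s}$, there are no resonances on $d_s$-forms in $\{\mathrm{Re}(\lambda)>P(V)\}$. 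And on the critical axis $\{\mathrm{Re}(\lambda)=P(V)\}$, one argues exactly as in Lemmas \ref{pole} and \ref{convergence} (with $L^1(\mathcal M,\nu)$ replaced by the completion for $\|\cdot\|_{V,k}$): the bound \eqref{eq:bbbound} with $k=0$ gives $\|\tilde R(\lambda)\|\leq 1$, the Neumann series then converges for $|z|>1$, the spectrum of $\tilde R(\lambda)$ lies in the closed unit disk, and the duality/pairing argument of Lemma \ref{convergence} shows the pole of the resolvent at a critical resonance is simple, i.e.\ there is no Jordan block.

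The main obstacle I anticipate is making the passage "pull $\varphi_{-t}^*$ through the contraction-and-wedge and renormalize" genuinely uniform: one must verify that the rescaled frames $(d\varphi_{-t})X_{u,h}^j/|(d\varphi_{-t})X_{u,h}^j|$ and the dual objects, together with the modified cutoffs, still range over (a subset of) the admissible sets $\mathscr V^u,\mathscr F^u,\mathcal P(\mathcal U)$ defining $\|\cdot\|_{V,k}$, and that the "bounded $C^0$ factor" is bounded uniformly in $t$, $x$ and the chosen frames — this is where the only-Hölder regularity of $E_s^*$, $E_u^*$ must be handled with care, though crucially the combination $\Lambda_k^{d_s}$ and the frames are being tested only against the order-zero current $m^u_V$, so no derivatives of these Hölder objects are ever needed.
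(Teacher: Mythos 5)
Your proposal is correct and takes essentially the same route as the paper: induction on $k$ with Lemma \ref{Crit} as the base case, pulling the pullback through the contraction-and-wedge operations so that the renormalized pushed-forward frames and transported partition of unity remain admissible (by flow-invariance of the Anosov splitting), which yields the extra exponential gain per level. The paper then likewise sums the component norms over $k$ and repeats the arguments of Lemmas \ref{negative} and \ref{pole} to get the spectral conclusions, exactly as you outline.
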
 
\begin{proof}
Contracting with a vector in $E_u$ and then wedging with a vector in $E_u^*$ sends a section of $C^0(\mathcal M; \Lambda_{k}^{d_s})$ to $C^0(\mathcal M; \Lambda_{k-1}^{d_s})$ so the formula makes sense by induction. Let's prove that the norm takes finite values. For this, we use the fact that $m^u_V$ is of order zero and Lemma \ref{Crit} to get first that for $f\in C^0(\mathcal M; \Lambda_{0}^{d_s})$, one has $\|f\|_{V,0}\leq C\|f\|_0$. This implies, for any $f\in C^0(\mathcal M; \Lambda_{1}^{d_s})$
$$  \sum_{i=1}^{d_u}\sum_{j=1}^{d_s}\|\chi_h\iota_{X_{u,h}^j}f\wedge Y_{u,h}^{i}\|_{V,0}\leq C'\sum_{i,j}\|X_{u,h}^j\|_{C^0}\|Y_{u,h}^i\|_{C^0}\|f\|_0\leq C''\|f\|_0$$
with a constant $C''$ independent of the choice of cover or partition of unity. This shows that $\|.\|_{V,1}$ takes finite values and a quick induction shows that it is also the case of $\|.\|_{V,k}$ for any $k$. The triangle inequality, homogeneity and non-negativity of $\|.\|_{V,k}$ follows from Lemma \ref{Crit} and an induction. Suppose now that $f \in C^0(\mathcal M; \Lambda_{k}^{d_s})$ and $f\neq 0$. Then $f$ is non zero on a small open set which is included in some $U_h$ of the open cover. Now, because $(X_{s,h}^j)$ and $(Y_{u,h}^i)$ are local basis, then there is $(i_0,j_0)$ such that $\chi_h\iota_{X_{s,h}^{j_0}}f\wedge Y_{u,h}^{i_0}$ is continuous and non zero on an open set. By induction and the fact that $m^u_V$ gives a positive measure to any non empty open set, we get that $\|f\|_{V,k}>0$ and $\|.\|_{V,k}$ thus defines a norm.

We now prove \eqref{eq:bbound}, for this, we consider $f\in C^{0}(\mathcal M; \Lambda_k^{d_s})$, a cover $\mathcal U\in \mathcal C$, a partition of unity $(\chi_h)$, $(X^j_{u,h})\in \mathscr V^u$ and $(Y^i_{u,h})\in \mathscr F^u$. For any $1\leq h \leq n$, we get
\begin{align*}
&\sum_{i=1}^{d_s}\sum_{j=1}^{d_u}\chi_h\iota_{X_{u,h}^j}e^{t\mathcal L_{\mathbf{X}}}f\wedge Y_{u,h}^{i}=\sum_{i=1}^{d_s}\sum_{j=1}^{d_u}\chi_h(y)e^{t\mathcal L_{\mathbf{X}}}\left( \iota_{(d\varphi_{-t})_{\varphi_{t}y}(X_{u,h}^j)}f\wedge e^{t\mathcal L_X}Y_{u,h}^{i}\right)(y)
\\&=\sum_{i=1}^{d_s}\sum_{j=1}^{d_u}e^{t\mathcal L_{\mathbf{X}}}\chi_h( \varphi_t y)\iota_{(d\varphi_{-t})_{\varphi_{t}y}(X_{u,h}^j)(\varphi_t y)}f(y)\wedge e^{t\mathcal L_X}Y_{u,h}^{i}(y).
\end{align*}
If we have $\mathcal U\in \mathcal C$, then using the invariance of the Anosov decomposition \eqref{eq:flowinv}, we see that $\varphi_{-t}\mathcal U\in \mathcal C$ with an associated partition of unity given by $(\chi_h\circ \varphi_t)_{1\leq h\leq n}$. Moreover, the same equation gives that $\big((d\varphi_{-t})_{\varphi_{t}y}(X_{s,h}^j)(\varphi_t y)\big)_{1\leq j\leq d_u}$ spans $E_s(y)$ for $y\in \varphi_{-t}U_h$ and similarly $\big(e^{t\mathcal L_X}Y_{u,h}^{i}(y)\big)_{1\leq i \leq d_s}$ spans $E_u^*(y)$. We compute
\begin{align*}
&\sum_{i=1}^{d_s}\sum_{j=1}^{d_u}e^{t\mathcal L_{\mathbf{X}}}\chi_h( \varphi_t y)\iota_{(d\varphi_{-t})_{\varphi_{t}y}(X_{u,h}^j)(\varphi_t y)}f(y)\wedge e^{t\mathcal L_X}Y_{u,h}^{i}(y)
\\=&\sum_{i,j}\|e^{t\mathcal L_X}Y_{u,h}^{i}\|\|(d\varphi_{-t})_{\varphi_{t}y}(X_{u,h}^j)\| e^{t\mathcal L_{\mathbf{X}}}\chi_h( \varphi_t y)\iota_{\tfrac{(d\varphi_{-t})_{\varphi_{t}y}(X_{u,h}^j)}{\|(d\varphi_{-t})_{\varphi_{t}y}(X_{u,h}^j)\|}}f(y)\wedge \frac{e^{t\mathcal L_X}Y_{u,h}^{i}}{\|e^{t\mathcal L_X}Y_{u,h}^{i}\|}.
\end{align*}
We then have $\|e^{t\mathcal L_X}Y_{u,h}^{i}\|,\|(d\varphi_{-t})_{\varphi_{t}y}(X_{u,h}^j)\|\leq Ce^{-t\eta}$ for some (uniform) $C,\eta>0$ by the Anosov property. Finally, we obtain 
$$ \sum_{i=1}^{d_s}\sum_{j=1}^{d_u}\|\chi_h\iota_{X_{u,h}^j}e^{t\mathcal L_{\mathbf{X}}}f\wedge Y_{u,h}^{i}\|_{V,k-1}\leq Ce^{-2\eta t} \sum_{i=1}^{d_s}\sum_{j=1}^{d_u}\big\|e^{t\mathcal L_{\mathbf{X}}}\left(\chi_h\iota_{\tilde X_{u,h}^j}f\wedge \tilde Y_{u,h}^{i}\right)\big \|_{V,k-1}$$
for $\tilde X_{u,h}^j:=(d\varphi_{-t})_{\varphi_{t}y}(X_{u,h}^j)/\|(d\varphi_{-t})_{\varphi_{t}y}(X_{u,h}^j)\|$ and $\tilde Y_{u,h}^{i}=e^{t\mathcal L_X}Y_{u,h}^{i}/\|e^{t\mathcal L_X}Y_{u,h}^{i}\|$
Passing to the supremum thus gives
$$\| e^{t\mathcal L_{\mathbf{X}}}f\|_{V,k-1}\leq Ce^{(P(V)-2(k-1)\eta) t}\|f\|_{V,k-1}\ \Rightarrow \  \| e^{t\mathcal L_{\mathbf{X}}}f\|_{V,k}\leq Ce^{(P(V)-2k\eta) t}\|f\|_{V,k}.$$
To conclude the proof of \eqref{eq:bbbound}, we only need to initialize the induction, and this is exactly the statement of Lemma \ref{Crit}.

Consider $f\in C^{\infty}(\mathcal M; \mathscr E_{d_s})$ and consider its decomposition
$$f= \sum_{k=0}^{d_s} \omega_k, \quad \omega_k \in C^{\alpha}(\mathcal M; \Lambda_k^{d_s}). $$
We define a norm on $C^{\infty}(\mathcal M; \mathscr E_{d_s})$ with the following bound when using the propagator:
$$\|f\|_{V}:=\sum_{k=0}^{d_s}\|\omega_k\|_{V,k}, \quad \|e^{t\mathcal L_X}f\|_V\leq Ce^{tP(V)}\|f\|_V.$$
This is the only thing we need to mimick the argument of Lemma \ref{negative} and Lemma \ref{pole} and prove that no resonance exists in $\{\mathrm{Re}(\lambda)>P(V)\}$ and that resonances on the critical axis have no Jordan block.
\end{proof}
We will now prove Theorem \ref{mainTheo} for the action on $d_s$-forms.
\begin{prop}[Critical axis for $d_s$-forms]
Under Assumption \ref{assumption}, the first resonance $P(V)$ is simple and the space of co-resonant states is spanned by $m^u_V$. 
\end{prop}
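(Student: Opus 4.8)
The plan is to mimic, in the bundle‑valued setting, the proof of Proposition~\ref{hard}: realise a co‑resonant state as a system of $\mathcal W^{ws}$‑transversal measures on the unstable leaves, check $\varphi_t$‑conformality and the change of variable by holonomy for the potential $V$, and then invoke Climenhaga's uniqueness statement \cite[Corollary~3.12]{Cli}. By passing to the adjoint $\mathbf P^*$ it is enough to treat co‑resonant states.

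\textbf{Co‑resonant states are measures.} First I would prove the $d_s$‑form analogue of Proposition~\ref{measure on the axis}. Proposition~\ref{realnorm} rules out Jordan blocks on the critical axis, so arguing as in Lemma~\ref{convergence} one gets $\Pi_0(P(V))=\lim_k\tilde R(P(V))^k$ in $\mathcal L(\mathcal H^s)$; testing $\tilde R(P(V))^k$ against non‑negative sections and using the positivity of $e^{t\mathbf P}$ together with the bounds \eqref{eq:bbound} and \eqref{eq:bbbound} then identifies $\Pi_0(P(V))$, component by component in the splitting \eqref{eq:split}, with a bounded operator into the completion of $C^0$ for the norms $\|\cdot\|_{V,k}$. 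Consequently any co‑resonant state $m$ at $P(V)$ is an order‑zero, $\Lambda^{d_u}(E_u^*\oplus E_s^*)$‑valued measure with $\mathrm{WF}(m)\subset E_s^*$, which we may assume non‑negative.

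\textbf{Restriction to leaves, conformality, holonomy.} Since $\dim E_s^*=d_u$ and the pairing $E_s^*\times E_u$ is non‑degenerate, $E_s^*$ meets the conormal $N^*\mathcal W^u(x)$ only at the zero section, so by \cite[Corollary~8.2.7]{Hor} the current $m$ restricts to every unstable leaf $\mathcal W^u(x)$; as $E_u^*$ annihilates $E_u$, only the $\Lambda^{d_u}E_s^*$‑component survives the restriction and, after fixing an orientation of $E_u^*$, defines a measure $m_x$ on $\mathcal W^u(x)$. The family $\{m_x\mid x\in\mathcal M\}$ is a system of $\mathcal W^{ws}$‑transversal measures. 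Its $\varphi_t$‑conformality for $V$ follows exactly as in Propositions~\ref{hard} and \ref{muv}, using that $m$ is a co‑resonant state, the Leibniz identity $\mathcal L_X(\varphi\wedge\alpha)=\mathcal L_X\varphi\wedge\alpha$ and the change of variables $\varphi_{-t}$, which yield relation \eqref{eq:change of variable} with $W=V$. For the change of variable by holonomy I would, as in Proposition~\ref{hard}, first establish via Banach--Steinhaus a uniform Hölder estimate $|(m_x-m_{x'},\phi)|\le C\,d(x,x')^\alpha\|\phi\|_{C^0}$ for the form‑valued restrictions — after trivialising $E_u$ and $E_u^*$ on a small rectangle — and then push forward by $\varphi_\tau$ and let $\tau\to+\infty$ to identify the holonomy factor of the standard $\delta_0$‑holonomy $z\mapsto[z,x']$ with $e^{w_V^+(z,[z,x'])}$, i.e.\ \eqref{eq:COVpi} with $W=V$.

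\textbf{Conclusion and main obstacle.} By \cite[Corollary~3.12]{Cli} there is only one such system up to a positive constant, so $m_x=c\,m^u_{x,V}$ for all $x$. A Fubini formula for currents upgrades this to $m=c\,m^u_V$: disintegrating $\mathrm{vol}$ along the unstable foliation and an auxiliary smooth transversal foliation $G$ by \cite[Proposition~1.6]{GBGW}, and noting that $\mathrm{WF}(\delta_u\,\delta_{\mathcal W^u(y)})\subset E_u^*\oplus E_0^*$ is disjoint (off the zero section) from $\mathrm{WF}(m)\subset E_s^*$, the continuity of the distributional product gives $m(f)=\int_{G_{\mathrm{loc}}(p)}m_y(\delta_u f)\,d\mathrm{vol}_p^G(y)=c\int_{G_{\mathrm{loc}}(p)}m^u_{y,V}(\delta_u f)\,d\mathrm{vol}_p^G(y)=c\,m^u_V(f)$ for every $f\in C^\infty(\mathcal M;\Lambda^{d_s}(E_u^*\oplus E_s^*))$, the inner integrals only feeling the $\Lambda^{d_s}E_u^*$‑component of $f$ since the remaining summands of \eqref{eq:split} restrict to $0$ on unstable leaves. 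Hence the space of co‑resonant states at $P(V)$ is spanned by $m^u_V$; together with the absence of Jordan blocks from Proposition~\ref{realnorm} this shows $P(V)$ is simple, and dually the resonant states are spanned by $m^s_V$. The delicate step is the change of variable by holonomy: the Banach--Steinhaus argument and the uniform Hölder estimate must be run on the form‑valued restrictions $m_x$, which forces one to choose local frames of $E_u$ and $E_u^*$ and to control them uniformly in $x$ — this is the "additional care" announced in the outline of Section~\ref{sec5}. The bookkeeping in the restriction step and in the Fubini formula, keeping track of which summand of \eqref{eq:split} survives on a leaf, is routine but must be done carefully.
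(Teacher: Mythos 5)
Your proposal follows essentially the same route as the paper's proof: reduce to co-resonant states, show they are order-zero currents using the splitting \eqref{eq:split} and the propagator bounds, restrict to unstable leaves to get a system of transversal measures, verify $\varphi_t$-conformality and the holonomy change of variable (via the Hölder-continuity/Banach--Steinhaus argument of Proposition \ref{hard}), invoke \cite[Corollary 3.12]{Cli}, and conclude with the Fubini disintegration. The one spot where you are thinner than the paper is the order-zero step: for the components $\Lambda_k^{d_s}$ with $1\leq k< d_s$ there is no notion of positivity of $e^{t\mathbf P}$ (these are not line bundles), and the paper instead bounds each cross-pairing $\langle e^{t\mathbf P}\omega_k,\theta_{d_s-k}\rangle$ by $Ce^{-2tk\eta}$ times a fixed non-vanishing reference pairing, so that only the $k=0$ term survives in the averaged limit defining $\Pi_0(\lambda)^*$ --- your bounds \eqref{eq:bbound}--\eqref{eq:bbbound} are exactly the required input, so this is a matter of presentation rather than a gap.
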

\begin{proof}
The argument of Lemma \ref{convergence} goes through and the projector $ \Pi_0(\lambda)$ on the resonant states is obtained as the limit of $\tilde R(\lambda)^n$ as an operator $\mathcal H^s \to \mathcal H^s$. The formula defining $\tilde R(\lambda)$ is the same. 
In particular, as in the proof of Proposition \ref{measure on the axis}, we obtain every co-resonant state by the following construction:
\begin{align*}
\forall \lambda\in P(V)+i\mathbb R, \ \forall v\in &C^{\infty}(\mathcal M; \Lambda^{d_s}(E_u^*\oplus E_s^*)), \   \Pi_0(\lambda)^*(v)=\lim_{n\to +\infty} (\tilde R(\lambda)^*)^n(v),  \end{align*}
where the convergence holds in $\mathcal H^s$.
Consider the pairing between $C^{\infty}(\mathcal M; \Lambda^{d_s}(E_u^*\oplus E_s^*))$ and $C^{\infty}(\mathcal M; \Lambda^{d_u}(E_u^*\oplus E_s^*))$ defined by $\langle v, u\rangle:=\int_{\mathcal M}u\wedge \alpha \wedge v,$ extended to  $\mathcal H^s\times \mathcal H^{-s}$.

\textbf{Co-resonant states define currents of order 0.} Consider two smooth forms $\omega\in C^{\infty}(\mathcal M; \Lambda^{d_s}(E_u^*\oplus E_s^*))$ and $\theta \in C^{\infty}(\mathcal M; \Lambda^{d_u}(E_u^*\oplus E_s^*))$. We first expand these forms:
\begin{equation}
\label{eq:splitt}
\omega=\sum_{k=0}^{d_s}\omega_k, \ \ \omega_k\in C^{\alpha}(\mathcal M; \Lambda_k^{d_s}), \quad \theta=\sum_{k=0}^{d_s}\theta_k, \ \ \theta_k\in C^{\alpha}(\mathcal M; \Lambda_k^{d_u}). 
\end{equation}
Note that the regularity of the component is only Hölder continuous here because of the regularity of the stable and unstable foliation. We see that $\omega_k\wedge \alpha \wedge \theta_l$ is a continuous $n$-form which is non zero if and only if $k+l=d_s$. 
The previous discussion justifies that the following convergence:
\begin{align*}
\langle \omega, \Pi_0(\lambda)^*\theta\rangle&=-\lim_{k\to+\infty}\int_{\mathbb R}(\chi'(t))^{(k)}e^{-t\lambda}\langle e^{t\mathbf P}\omega, \theta\rangle dt
\\&=- \lim_{k\to+\infty}\int_{\mathbb R}(\chi'(t))^{(k)}e^{-t\lambda}\sum_{k=0}^{d_s}\langle e^{t\mathbf P}\omega_k, \theta_{d_s-k}\rangle dt.
\end{align*}
Here, we have noticed that the integration over all the manifold is a $n$-current of order zero and thus extends to continuous $n$-forms. 

By Assumption \ref{assumption}, we can find a trivialization basis $(Y^j_{u})_{1\leq j \leq d_s}\in C^0(\mathcal M; E_u^*)$ such that $  E_u^*=\mathrm{Span}\{Y^j_{u}, \ 1\leq j \leq d_s\}$ and $(Y^j_{s})_{1\leq j \leq d_u}\in C^0(\mathcal M; E_s^*)$ such that $ E_s^*=\mathrm{Span}\{Y^j_{s}, \ 1\leq j \leq d_u\}$. Let $Y_s^1\wedge \ldots \wedge Y_s^{d_s}\wedge \alpha\wedge Y_s^1\wedge \ldots \wedge Y_s^{d_u}=: \tilde \omega\wedge \alpha  \wedge \tilde\theta$ be a non vanishing continuous $n$-form which we suppose to be positive. We notice to start that, using the Anosov property, there is an $\eta>0$ such that
\begin{align*}
\int_{\mathcal M}\big|&e^{t\mathbf{P}}(Y_u^1\wedge \ldots Y_u^{d_s-1}\wedge Y_s^1)\wedge \alpha \wedge Y_u^{d_s}\wedge Y_s^2\wedge \ldots \wedge Y_s^{d_s}\big|
\\&\leq  Ce^{-t\eta}\int_{\mathcal M}\big|e^{t\mathbf{P}}(Y_u^1\wedge \ldots Y_u^{d_s-1})\wedge \alpha \wedge Y_u^{d_s}\wedge Y_s^1\wedge  Y_s^2\wedge \ldots \wedge Y_s^{d_s}\big|
\\&\leq  Ce^{-2t\eta}\int_{\mathcal M}\big|e^{t\mathbf{P}}(Y_u^1\wedge \ldots Y_u^{d_s-1} \wedge Y_u^{d_s})\wedge \alpha \wedge Y_s^1\wedge  Y_s^2\wedge \ldots \wedge Y_s^{d_s}\big|
\\&\leq Ce^{-2t\eta}\int_{\mathcal M}e^{t\mathbf P}(\tilde \omega) \wedge \alpha \wedge \tilde \theta.
\end{align*}
Thus, a quick induction yields the following bound
$$\forall 0\leq k\leq d_s, \quad  |\langle e^{t\mathbf P}\omega_k, \theta_{d_s-k}\rangle|\leq C\|\omega\|_{C^0}e^{-2tk\eta} \langle e^{t\mathbf P}\tilde \omega,\tilde \theta\rangle.$$

Now, choose a smooth section $\omega$ and $\theta$ such that $\omega_0$ and $\theta_{d_s}$  are non vanishing on $\mathcal M$. Note that such an $\omega$ can be obtained by approximating in $C^0$ norm a non vanishing continuous section by smooth sections. Then, we get the convergence of 
$$ \ell:=\langle \omega, \Pi_0(\lambda)^*\theta\rangle =-\lim_{k\to+\infty}\int_{\mathbb R}(\chi'(t))^{(k)}e^{-t\lambda}\langle e^{t\mathbf P}\omega_0, \theta_0\rangle(1+O(e^{-\eta t})) dt.$$
Consider thus an arbitrary $\epsilon>0$ and an integer $k_0\geq 0$ such that for any $k\geq k_0$, one has
$$-\epsilon \leq \big|\int_{\mathbb R}(\chi'(t))^{(k)}e^{-t\lambda}\langle e^{t\mathbf P}\omega_0, \theta_0\rangle(1+O(e^{-\eta t})) dt+\ell\big| \leq \epsilon. $$
The support of the $(\chi'(t))^{(k)}$ goes to infinity with $k$ so we can suppose that for $k\geq k_0$, one has $ 1-\epsilon \leq 1+O(e^{-\eta t})\leq 1+\epsilon$ on the support of the integrand. In other words, we have obtained:
$$\forall k\geq k_0, \quad \frac{-\epsilon}{1+\epsilon} \leq  \big|\int_{\mathbb R}(\chi'(t))^{(k)}e^{-t\lambda}\langle e^{t\mathbf P}\omega_0, \theta_0\rangle dt+\ell\big|\leq \frac{\epsilon}{1-\epsilon}. $$
This also proves the boundedness of the above integral with $\omega_0$ and $\theta_0$ replaced by $\tilde \omega$ and $\tilde \theta$ and this shows that 
\begin{align*} \langle\omega, \Pi_0(\lambda)^*\theta\rangle& =\lim_{k\to+\infty}\int_{\mathbb R}(\chi'(t))^{(k)}e^{-t\lambda}\langle e^{t\mathbf P}\omega_0, \theta_0\rangle dt
\\ &\leq C\|\omega\|_0\limsup\int_{\mathbb R}|(\chi'(t))^{(k)}|e^{-tP(V)}\langle e^{t\mathbf P}\tilde \omega, \tilde \theta\rangle dt
\leq C'\|w\|_{C^0}.
\end{align*}
The above bound actually holds for any $\theta$ and $\omega$ and we get that every co-resonant state is an order zero current.

\textbf{Restrictions of co-resonant state on unstable manifolds are well defined.} Following the strategy of the proof of Proposition \ref{hard} we consider a co-resonant state $\theta$ which satisfies  $\mathrm{WF}(\theta)\subset E_s^*$. For any $x\in \mathcal M$, we can thus define the restriction of $\theta$ to $\mathcal W^u(x)$. We will denote by $\theta_x:=\theta_{|\mathcal W^u(x)}$. More explicitly, for a smooth function $f\in C^{\infty}(\mathcal M)$, we can define 
 $$(f_{|\mathcal W^u(x)},\theta_x):=(f[\mathcal W^u(x)], \theta)=(f[{\mathcal W^u(x)}], \theta)_{\mathcal H^s, \mathcal H^{-s}} $$
 where the bracket denotes the distributional pairing and $[\mathcal W^u(x)]$ denotes the $d_u$-current which consists in integrating over the unstable manifold $\mathcal W^u(x)$. Then $\theta_x$ is seen to be of order zero, in other words, $\{\theta_x\mid x\in \mathcal M\}$ defines a system of measures on the unstable leaves.
 
 \textbf{The system of measures  $\{\theta_x\mid x\in \mathcal M\}$ satisfies $\varphi_t$-conformality.} This is again a consequence of the fact that $\theta$ is a co-resonant state. More precisely, for any smooth function $f\in C^{\infty}(\mathcal M)$, we can write
 $$( f[\mathcal W^u(x)], \theta)_{\mathcal H^s \times \mathcal H^{-s}}=e^{-tP(V)}( e^{t\mathbf P}( f[\mathcal W^u(x)],\theta)_{\mathcal H^s \times \mathcal H^{-s}}.$$
Using $e^{t\mathbf P}( f[\mathcal W^u(x)])=e^{t\mathbf P} f[\mathcal W^u(\varphi_t x)]$, then yields
$$\theta_x(f)=e^{-tP(V)}\theta_{\varphi_t x}(e^{S_tV(\varphi_{-t}y)}f(\varphi_{-t}y)) $$
which is exactly the $\varphi_t$-conformality for the potential $V$.

\textbf{The system of measures $\{\theta_x \mid x\in \mathcal M\}$ satisfies the change of variable by holonomy.} This a consequence of the continuity of $x\mapsto \theta_x$ which is shown as in the proof of Proposition \ref{hard}. Indeed, another way of obtaining $\theta_x$ is to write $\theta$ as a linear combination of smooth $d_u$-form with distributional coefficients in $\mathcal H^{-s}$. Then the restriction $\theta_x$ is obtained by pulling back the smooth forms and the coefficients. The pullback on the coefficients are well defined by the wavefront set condition and we deduce the continuity property by the continuity of the coefficients.

Following the proof of Proposition \ref{hard} and using \cite[Corollary 3.12]{Cli}, we obtain the existence of a constant $c>0$ such that for any $x\in \mathcal M$, one has $\theta_x=c m^u_{x,V}$.

\textbf{The first resonance $P(V)$ is simple.} The first part of the proof shows that if suffices to prove that for any $\omega \in C^{\alpha}(\mathcal M; \Lambda_0^{d_s})$, one has $(\omega, \theta)=c(\omega, m^u_V)$. Note that such a $\omega$ writes $\omega =f\mathrm{vol}_{\mathcal W^u(x)}$ for some $f\in C^{0}(\mathcal M)$, we can then write
 $$\theta_x(f):=( f[\mathcal W^u(x)], \theta)_{\mathcal H^s \times \mathcal H^{-s}}=(  f\delta_{\mathcal W^u(x)},\mathrm{vol}_{\mathcal W^u(x)}\wedge \alpha \wedge \theta)_{\mathcal H^s \times \mathcal H^{-s}},$$
where $\delta_{\mathcal W^u(x)}$ is the integration of function on the unstable manifold of $x$ (see \eqref{eq:integr}). Remark that here, the last bracket is the usual distributional pairing. In particular, $\theta_x$ is the restriction to the unstable manifold $\mathcal W^u(x)$ of the measure $\mathrm{vol}_{\mathcal W^u}\wedge \alpha \wedge \theta$. Using the last part of the proof of Proposition \ref{hard} then yields that
$$\forall f\in C^{0}(\mathcal M ), \quad (\omega, \theta)=(\mathrm{vol}_{\mathcal W^u}\wedge \alpha \wedge \theta)(f)=c(\mathrm{vol}_{\mathcal W^u}\wedge \alpha \wedge m^u_V)=c(\omega, m^u_V).$$
This concludes the proof.
\end{proof}

We can now prove the second part  of Theorem  \ref{equilibriumtheo} for for $d_s$-forms.
\begin{prop}
The equilibrium state $\mu_{V}$ is given by the averaging formula \eqref{eq:average}.

Finally, if the flow is weak mixing with respect to the equilibrium state $\mu_{V}$, then the only resonance on the critical axis is $P(V)$.
\end{prop}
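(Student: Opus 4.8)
The plan is to transpose the proof of Proposition~\ref{nothard} to the bundle of $d_s$-forms, replacing $\eta,\nu,\mu_{V+J^u}$ by $m^s_V,m^u_V,\mu_V$ and the distributional product $\eta\times\nu$ by the wedge product $m^u_V\wedge\alpha\wedge m^s_V$, which by Lemma~\ref{lemmm} is a positive multiple of $\mu_V$. By the previous proposition the first resonance $P(V)$ is simple, so, using the $d_s$-forms analogues of Lemmas~\ref{spectral projector} and~\ref{convergence} (established in the proof of the previous proposition) together with Proposition~\ref{muv}, the spectral projector at $P(V)$ has rank one and equals $\Pi_0(P(V))=\langle m^s_V,m^u_V\rangle^{-1}\,m^s_V\otimes m^u_V$, where the pairing $\langle m^s_V,m^u_V\rangle=\int_{\mathcal M}m^u_V\wedge\alpha\wedge m^s_V$ is positive by Lemma~\ref{lemmm}.

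For the averaging formula~\eqref{eq:average2}, fix a non-negative section $\omega\in C^0(\mathcal M;\Lambda^{d_s}E_u^*)$ which does not vanish identically and set $\Theta:=\omega\wedge\alpha\wedge m^u_V$. Using the local expression~\eqref{eq:muv}, $\Theta$ is a non-negative finite Radon measure (on each unstable plaque $m^u_V$ is a genuine positive measure and $\omega\geq 0$ for the fixed orientation of $E_u^*$), and it is non-zero since $\omega$ is continuous and positive on some open set while $m^u_V$ charges every non-empty open set, as in Lemma~\ref{new}. Next I would compute the push-forward: from $(\varphi_t)_*(\psi\wedge S)=(\varphi_{-t}^*\psi)\wedge(\varphi_t)_*S$, from $\varphi_{-t}^*\alpha=\alpha$ in the relevant sense (because $\mathcal L_X(\,\cdot\wedge\alpha)=(\mathcal L_X\,\cdot)\wedge\alpha$ on $\mathscr E_0^\bullet$, cf.\ Proposition~\ref{muv}), from the co-resonance identity $(\varphi_t)_*m^u_V=e^{S_tV(\varphi_{-t}x)}e^{-t\mathbf{P}^*}m^u_V=e^{-tP(V)}e^{S_tV(\varphi_{-t}x)}m^u_V$, and from $e^{S_tV(\varphi_{-t}x)}\varphi_{-t}^*\omega=e^{t\mathbf{P}}\omega$ (noting that $e^{t\mathbf{P}}$ preserves $C^0(\mathcal M;\Lambda^{d_s}E_u^*)$ since $E_u^*$ is flow-invariant), one gets $(\varphi_t)_*\Theta=e^{-tP(V)}\,(e^{t\mathbf{P}}\omega)\wedge\alpha\wedge m^u_V$. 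Consequently, exactly as in the computation inside Proposition~\ref{nothard}, the integral $-\int_0^{+\infty}(\chi')^{(k)}(t)\,\big((\varphi_t)_*\Theta\big)(f)\,dt$ can be rewritten as the pairing of $m^u_V$ with $f$ times the $k$-th iterate $\tilde R(P(V))^k\omega$; letting $k\to+\infty$ and using $\tilde R(P(V))^k\to\Pi_0(P(V))$ (the $d_s$-forms version of Lemma~\ref{convergence}, which rests on $m^u_V$ being of order zero and on $\tilde R(P(V))$ being a contraction for the norm of Lemma~\ref{Crit}; a density argument reduces the continuous section $\omega$ to a smooth one), the limit equals $\langle m^s_V,m^u_V\rangle^{-1}m^u_V(\omega)\,\langle f\,m^s_V,m^u_V\rangle$, which by Lemma~\ref{lemmm} is a positive multiple of $\mu_V(f)$ — positive because $\Theta$ is a non-zero non-negative measure, so $m^u_V(\omega)>0$. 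This establishes~\eqref{eq:average2}.

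For the last assertion I would follow the end of the proof of Proposition~\ref{nothard}. Topological mixing is equivalent, for a transitive Anosov flow, to the flow not being a constant-time suspension (the Anosov alternative, \cite[Theorem 8.1.3]{FishHas}) and to weak mixing with respect to $\mu_V$ (\cite[Theorem 7.3.6]{FishHas}). Assume the flow is weak mixing with respect to $\mu_V$. By the construction in the proof of the previous proposition (the $d_s$-forms counterpart of Proposition~\ref{measure on the axis}), every co-resonant state at a resonance $P(V)+ib\in\mathcal C_{d_s}$ has the form $\gamma=h\,m^u_V$ with $h$ bounded; by Lemma~\ref{lemmm}, $\gamma\wedge\alpha\wedge m^s_V=c\,h\,\mu_V$, so $h$ is well defined in $L^\infty(\mathcal M,\mu_V)$. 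Inserting $\gamma=h\,m^u_V$ into $\mathcal L_{\mathbf{P}^*}\gamma=(P(V)+ib)\gamma$ and using $\mathcal L_X m^u_V=(P(V)-V)m^u_V$ gives $(Xh)\,m^u_V=ib\,h\,m^u_V$, whence $Xh=ibh$ in $L^2(\mathcal M,\mu_V)$. By \cite[Theorem VII.14]{ReSi}, weak mixing forces $b=0$ and $h$ constant, so $m^u_V$ spans the co-resonant states on $\mathcal C_{d_s}$ and $P(V)$ is the only resonance there. Conversely, if the flow is not weak mixing with respect to $\mu_V$ then an $L^2(\mu_V)$-eigenfunction of $X$ can be taken smooth by \cite[Theorem 4.3]{Wal}, and $h\,m^u_V$ is then a co-resonant state at the resonance $P(V)+ib\in\mathcal C_{d_s}$. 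The main obstacle throughout is the current/form bookkeeping — in particular the push-forward identity and the verification that the limiting pairings are legitimate — which relies on $m^u_V$ being of order zero and on $e^{t\mathbf{P}}$ preserving the line bundle $\Lambda^{d_s}E_u^*$; once these are in place the argument is a line-by-line adaptation of Proposition~\ref{nothard}.
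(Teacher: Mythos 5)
Your proposal is correct and follows essentially the same route as the paper: the averaging formula comes from writing $\Pi_0(P(V))$ as the rank-one projector built from $m^s_V$ and $m^u_V$ together with Lemma~\ref{lemmm}, and the mixing statement comes from identifying a second co-resonant state on $\mathcal C_{d_s}$ with a bounded density $h$ against $m^u_V$, reducing to the $L^2(\mu_V)$-eigenfunction characterization of weak mixing, with the converse via the smoothing theorem of \cite[Theorem 4.3]{Wal}. The only (harmless) variation is how you place $h$ in $L^\infty(\mathcal M,\mu_V)$: you wedge $\gamma=h\,m^u_V$ with $\alpha\wedge m^s_V$ and invoke Lemma~\ref{lemmm}, whereas the paper uses the polar decomposition of order-zero currents plus the upper Gibbs bound for $\mu_{m^u_V}$ to get $\mu_{m^u_V}\ll\mu_V$.
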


\begin{proof}
The averaging formula is derived form the product formula of Lemma \ref{lemmm} and the expression of the projector $ \Pi_0(P(V))$ using the exact same argument as in the proof of Proposition \ref{nothard}. Suppose now that there is another co-resonant state $\theta$ associated to a resonance $P(V)+i\lambda$ on the critical axis. We use the fact that $\theta$ is of order $0$ as well as \cite[Chapter 6 (2.6)]{Si} to get that there exists a Borel measure $\mu_{\theta}$ and a $\mu_{\theta}^\lambda$-mesurable function $T_{\theta}$ with values in $\Lambda^{d_s}(E_u^*\oplus E_s^*)$ normalized $\mu_{\theta}^\lambda$-a.e such that
$$\forall  \omega\in C^{\infty}(\mathcal M; \Lambda^{d_s}(E_u^*\oplus E_s^*)), \quad \langle\omega,\theta\rangle=\int_{\mathcal M}\langle \omega(x), T_{\theta}(x)\rangle d\mu^{\lambda}_{\theta}(x), $$
where the bracket here denotes the scalar product on $\Lambda^{d_s}(E_u^*\oplus E_s^*)$. Moreover, the measure $\mu_{\theta}^\lambda$ is defined by
\begin{equation}
\label{eq:mes}
\forall U\subset \mathcal M \text{ open set}, \ \ \mu^{\lambda}_{\theta}(U):=\sup_{\|w\|_{C^0}\leq 1, \mathrm{supp}(\omega )\subset U}\langle\omega, \theta\rangle.  
\end{equation}
The proof of the previous proposition showed that there is a $C>0$ such that 
$$\forall  \omega\in C^{\infty}(\mathcal M; \Lambda^{d_s}(E_u^*\oplus E_s^*)), \quad \langle\omega, \theta\rangle \leq C\langle \theta, m^u_V\rangle \ \Rightarrow \ \mu_{\theta}\ll  \mu_{m^u_V}=:\mu. $$

Now, using \eqref{eq:muv}, \eqref{eq:mes} as well as the bound 
$$\exists C>0, \forall q\in \mathcal M, \forall t\geq 0, \quad m^{u}_{V}(B_t(q,\delta))\leq C e^{S_t(V)(q)-tP(V)}, $$
(which can be found in the proof of \cite[Theorem 3.10]{Cli}), we get that $\mu$ satisfies an upper Gibbs bound. In other words, we have $\mu \ll \mu_V$. In particular, the Radon Nikodym density $h:=\tfrac{d\mu_\theta^\lambda}{d\mu}$ is an element of $L^{\infty}(\mathcal M, \mu_V)$. 

We use \cite[Theorem VII.14]{ReSi} to see that the flow is weakly mixing if and only if the only eigenvalue is $1$ and it is a simple eigenvalue. In other words, if 
\begin{equation}
\label{eq:thrid}
\begin{cases}
Xf=i\lambda f
\\ f\in L^2(\mathcal M,\mu_{V})
\end{cases}
\end{equation}
has no solution except for $\lambda=0$ and $f$ constant. But then, we have shown that $h$ is a non trivial solution of the above system which contradicts weak mixing.

Conversely, if the flow is not weakly mixing, then \eqref{eq:thrid} admits a $L^2$ solution which can be upgraded to a smooth solution $f$ by \cite[Theorem 4.3]{Wal}. Define $\gamma:=fm^u_V$, then 
$$\mathbf{P}^*\gamma=(P(V)+i\lambda)\gamma, \quad \mathrm{WF}(\gamma)\subset E_s^*, $$
that is, $P(V)+i\lambda$ is another resonance on $\mathcal C_{d_s}$.

\end{proof}

\section{Other critical axis and regularity of the pressure}
\label{sec6}
In this section, we will prove the statement about critical axes of Theorem \ref{mainTheo} as well as Corollary \ref{2}.
\begin{proof}
We start from the Guillemin trace formula
$$e^{\lambda t_0}\tr^{\flat}\big(\varphi_{-t_0}^*\big( (\mathbf{P})_{\mathscr E_0^{k}}-\lambda\big)^{-1}\big)=\sum_{\gamma\in \Gamma}\frac{T_{\gamma^{\sharp}}e^{-(\lambda+V_{\gamma}) T_{\gamma}}\tr(\Lambda^{k}\mathcal P_{\gamma})}{|\det(\mathrm{Id}-\mathcal P_{\gamma})|}.$$
We know by Lemma \ref{realnorm} that the critical axis of $(\mathbf{P})_{\mathscr E_0^{d_s}}$ is $\{\mathrm{Re}(\lambda)=P(V)\}$. We will prove that the other functions are analytic in a half plane $\{\mathrm{Re}(\lambda)>P(V)-\epsilon\}$ for some $\epsilon>0.$ We list the eigenvalues of the Poincare map 
\begin{equation}
e^{\lambda_1^-(\gamma)}\leq \ldots \leq e^{\lambda_{d_u}^-(\gamma)}\leq -e^{\eta T_{\gamma}}\leq  e^{\eta T_{\gamma}}\leq e^{\lambda_1^+(\gamma)}\leq \ldots \leq e^{\lambda_{d_s}^+(\gamma)} 
\end{equation}
for some uniform constant $\eta>0$ given by the Anosov property. Now we can compute
$$\tr(\Lambda^{k}\mathcal P_{\gamma}):=\sigma_{k}( e^{\lambda_1^-(\gamma)}, \ldots , e^{\lambda_{d_u}^-(\gamma)},e^{\lambda_1^+(\gamma)}, \ldots , e^{\lambda_{d_s}^+(\gamma)})$$
where $\sigma_k$ is the $k$-th symmetric polynomial. We see that the maximum value of $\tr(\Lambda^{k}\mathcal P_{\gamma})$ is attained at $k=d_s$ where one can choose all eigenvalues larger than $1$ without choosing any other eigenvalues. In particular, there is a constant $C>0$, independent of $\gamma$ and $k$ such that if $k\neq d_s$, then
$$|\tr(\Lambda^{k}\mathcal P_{\gamma})|\leq C\tr(\Lambda^{d_s}\mathcal P_{\gamma})e^{-\eta T_{\gamma}}. $$
But now, we know that for $k=d_s$, the trace is analytic and converges in $\{\mathrm{Re}(\lambda)>P(V)\}$. In particular, because all terms are positive, if $\epsilon<\eta$, then for any $\lambda\in (P(V)-\epsilon,P(V))$, one has
$$\sum_{\gamma\in \Gamma}\frac{T_{\gamma^{\sharp}}e^{-(\lambda+V_{\gamma}) T_{\gamma}}\tr(\Lambda^{k}\mathcal P_{\gamma})}{|\det(\mathrm{Id}-\mathcal P_{\gamma})|}\leq C\sum_{\gamma\in \Gamma}\frac{T_{\gamma^{\sharp}}e^{-(\lambda-\eta +V_{\gamma}) T_{\gamma}}\tr(\Lambda^{d_s}\mathcal P_{\gamma})}{|\det(\mathrm{Id}-\mathcal P_{\gamma})|}<+\infty $$
and this shows that the left hand side is analytic in the region $\{\mathrm{Re}(\lambda)>P(V)-\epsilon\}$ as claimed.
\end{proof}
Now we prove Corollary \ref{2}.
\begin{proof}
From \cite[Theorem 1]{GB}, we can find a $C^1$ neighborhood $\mathcal U$ of $(X_0, V_0)$ and an anisotropic Sobolev space $\mathcal H^{s_0}$ such that for any $(X,V)\in \mathcal U$, the operator $-X+V$ has discrete spectrum in $\{\mathrm{Re}(\lambda)\geq -1\}$. Now, the pressure $P_X(V+J^u)$ is obtained as a simple eigenvalue (from Theorem \ref{mainTheo}) of a smooth family of operators acting on $\mathcal H^{s_0}$. One deduces the regularity statement by a perturbation argument of the resolvent, just like in the proof of \cite[Corollary 2]{GB}. The regularity of $P_X(V)$ is obtained by the same argument but on $d_s$-forms.
\end{proof}
\section{Complex potential}
\label{seccomplex}
\begin{proof}
Consider a smooth complex valued potential $W=V+iU\in C^{\infty}(\mathcal M, \mathbb C)$ with $V,U\in C^{\infty}(\mathcal M, \mathbb C)$. We will only treat the case of $0$-froms since the proof is similar for $d_s$-forms. We start by proving \eqref{eq:inclusion}. Remark that since $iU$ is imaginary, the norm $\|.\|_V$ constructed in Lemma \ref{new} satisfies
\begin{equation}
\label{eq:lips2}
\forall f\in C^{\infty}(\mathcal M), \quad \|e^{t\mathbf{P}_W}f\|_V\leq e^{tP(V+J^u)}\|f\|_{V},
\end{equation}
where $\mathbf{P}_W:=-X+W=-X+V+iU$. This means that the proof of Lemma \ref{negative} extends to $W$ and this shows that \eqref{eq:inclusion}.

Actually, the estimate \eqref{eq:lips2} is the only thing needed for the proofs of Lemma \ref{pole}, Lemma \ref{convergence} and Proposition \ref{measure on the axis}. In other words, the Ruelle resonances on the critical axis $\mathrm{Res}_0(W)\cap \{ \mathrm{Re}(\lambda)= P(V+J^u)\}$ have no Jordan block and the coresponding resonant states are measures which are absolutely continuous with respect to $\eta_V$ where $\eta_V$ is the resonant state constructed in Theorem \ref{mainTheo} for the real potential $V$.

Let $P(V+J^u)+i\theta\in \mathrm{Res}_0(W)\cap \{ \mathrm{Re}(\lambda)= P(V+J^u)\} $ be a resonance on the axis and let $\gamma$ be a co-resonant state. Denote by $h=\tfrac{d\gamma}{d\eta}\in L^{\infty}(\mathcal M, \mu_{V+J^u})$ the Radon-Nikodym derivative of $\gamma$. Since one has
$$ (-X+V-P(V+J^u))\eta =0, \ (-X+V+iU-P(V+J^u)-i\theta)\gamma=0,$$
one deduces that $(-X(h)+i(U-\theta)h)\eta=0.$ Since $\eta$ has full support, integrating yields
\begin{equation}
    \label{eq:h}
    e^{-tX}h=e^{it(\theta-U_t)}h, \quad \mu_{V+J^u}- \text{a.e}.
\end{equation}
We can now use \cite[Theorem 4.3]{Wal} for the Lie group $G=S^1$ to deduce that there exists a solution of \eqref{eq:h} which is smooth. We can compute that
$$X(|h|^2)=2\mathrm{Re}(\bar{h}X(h))=2\mathrm{Re}(it(U-\theta)|h|^2)=0. $$
In particular, $h$ does not vanish since it is non-zero. Integrating \eqref{eq:h} on a periodic orbit $\gamma \in \Gamma$ gives \eqref{eq:arithm}. Conversely, if $U$ satisfies \eqref{eq:arithm}, then one can define a solution $h$ of \eqref{eq:h}
by fixing some $x_0\in \mathcal M$ and defining $h(\varphi_t x_0)=e^{it(U_t-\theta)}h(x_0)$. Using the Anosov closing lemma, one can show that this defines a continuous (hence $\mu_{V+J^u}$-measurable) function (see \cite[Theorem 10.1.12]{Lef} for a detailed argument). Then applying \cite[Theorem 4.3]{Wal} again gives that $h$ can be chosen to be smooth. Defining $\gamma=h\eta$, we obtain see that 
$$(-X+W-P(V+J^u)-i\theta)\gamma=0, \quad \mathrm{WF}(\gamma)\subset E_u^*, $$
i.e that $P(V+J^u)+i\theta\in \mathrm{Res}_0(W)$. This concludes the proof.

\end{proof}

\printbibliography[
heading=bibintoc,
title={References}]

\appendix
\section{Hölder continuity of a distributional product.}
\label{secappendix}
In this appendix, we fix a continuous function $\varphi$ and an order zero distribution $\theta $ such that $\mathrm{WF}(\theta)\subset E_s^*$. This means that the distributional product $(\delta_{\mathcal W^u(x)},\varphi \theta)$ is well defined.
We will prove the following result.

\begin{prop}
\label{appendix}
There exists $C(\varphi)>0,\alpha>0$ and $\epsilon>0$ such that if $d(x,x')<\epsilon$, then 
\begin{equation}
\label{eq:appendix} |(\delta_{\mathcal W^u(x)},\varphi \theta)-(\delta_{\mathcal W^u(x')},\varphi \theta)|\leq C(\varphi)d(x,x')^{\alpha}.
\end{equation}
Here, the exponent $\alpha$ only depends on the unstable foliation.
\end{prop}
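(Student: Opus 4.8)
The plan is to localise and then patch with a partition of unity, so that it suffices to prove \eqref{eq:appendix} when $\varphi$ is supported in one member $U$ of a fixed finite atlas and $x,x'$ lie in a slightly smaller set, with $C$ and $\alpha$ uniform over the atlas. On such a $U$ I would introduce coordinates $(a,c,\tau)\in\RR^{d_u}\times\RR^{d_s}\times\RR$ adapted to the local product structure of Subsection~\ref{anoosov}: the unstable leaves become the slices $\{(c,\tau)=\mathrm{const}\}$, the weak–stable leaves become $\{a=\mathrm{const}\}$, and $X=\partial_\tau$. Comparing with \eqref{eq:dual} one reads off that in these coordinates $E_s^*=\mathrm{span}\{da_i\}$, $E_u^*=\mathrm{span}\{dc_j\}$ and $E_0^*=\RR\,d\tau$. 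Two regularity facts about this chart will be used, both holding with constants depending only on the hyperbolicity rates: (i) the chart is a bi‑Hölder homeomorphism of some Hölder exponent $\alpha>0$ — this is exactly where the ``unstable foliation'' enters the statement — so each local leaf is $\{(c,\tau)=\beta(x)\}$ with $x\mapsto\beta(x)$ Hölder‑$\alpha$ and $|\beta(x)-\beta(x')|\le C\,d(x,x')^{\alpha}$ for $x,x'$ close; (ii) in these coordinates the weak–stable holonomy $\mathcal W^u(x')\to\mathcal W^u(x)$ is the identity in $a$, and it moves points a distance $\le C\,d(x,x')^{\alpha}$.

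The heart of the argument is to convert the wavefront bound into a transverse regularity statement. Writing $b=(c,\tau)$, the hypothesis $\mathrm{WF}(\theta)\subset E_s^*=\{\xi_b=0\}$ says precisely that $\theta$ is smooth in the variables $b$; together with the order‑zero assumption this yields — a quantitative version of \cite[Corollary~8.2.7]{Hor} — that on $U$ one may write $\theta=\int\theta_b\,db$ for a map $b\mapsto\theta_b$ which is $C^\infty$, hence locally Lipschitz, with values in the Banach space of finite Radon measures on $\RR^{d_u}_a$, and that the pull-back of $\theta$ to the leaf $\{b=b_0\}$ is precisely $\theta_{b_0}$ (a conditional density, continuous and smooth along leaves by \cite[Proposition~1.6]{GBGW}, gets absorbed). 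In particular $(\delta_{\mathcal W^u(x)},\varphi\theta)=\int_{\RR^{d_u}}\varphi|_{\{b=\beta(x)\}}\,d\theta_{\beta(x)}$.

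It then remains to estimate the difference, which I split as
\[
\int\varphi|_{\{b=\beta(x)\}}\,d\bigl(\theta_{\beta(x)}-\theta_{\beta(x')}\bigr)\;+\;\int\bigl(\varphi|_{\{b=\beta(x)\}}-\varphi|_{\{b=\beta(x')\}}\bigr)\,d\theta_{\beta(x')}.
\]
The first integral is at most $\|\varphi\|_{C^0}\,\|\theta_{\beta(x)}-\theta_{\beta(x')}\|_{\mathrm{TV}}\le C\|\varphi\|_{C^0}\,|\beta(x)-\beta(x')|\le C'\|\varphi\|_{C^0}\,d(x,x')^{\alpha}$ by the Lipschitz property of $b\mapsto\theta_b$ and fact~(i). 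In the second integral the two leaves are matched by the weak–stable holonomy of fact~(ii), which displaces points by at most $C\,d(x,x')^{\alpha}$; this term vanishes when $\varphi$ is constant along weak–stable leaves — which is the situation in which \eqref{eq:appendix} is invoked in the proof of Proposition~\ref{hard} — and in general is controlled by $\|\theta\|\,\omega_\varphi\!\bigl(C\,d(x,x')^{\alpha}\bigr)$. Summing over the partition of unity and choosing $\epsilon$ below the size of the charts concludes.

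I expect the main obstacle to be making the two structural inputs precise: that the product chart is bi‑Hölder with exponent depending only on the flow, and that $\mathrm{WF}(\theta)\subset E_s^*$ together with order zero upgrades to the Lipschitz‑in‑$b$ family $\{\theta_b\}$ with pull-back to a leaf equal to $\theta_b$ (a naive Littlewood–Paley estimate instead stumbles on the non‑integrability of $\widehat\theta$, which is why I route through this transverse regularity). Once these are in hand the estimate is the short computation above, and the exponent obtained is the Hölder exponent of the (un)stable foliation, as claimed.
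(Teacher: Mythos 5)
Your route is genuinely different from the paper's, which follows the Fourier-analytic argument of \cite[Proposition 6]{Wei}: one splits frequency space into a small conic neighbourhood $\Omega$ of $E_0^*\oplus E_u^*$, where $\widehat{\theta}$ decays superpolynomially because $\mathrm{WF}(\theta)\subset E_s^*$, and its complement, where $\widehat{\delta_{\mathcal W^u(x)}}-\widehat{\delta_{\mathcal W^u(x')}}$ decays superpolynomially with constants that are H\"older in the base point, the factor $d(x,x')^{\alpha}$ coming from the H\"older dependence on $x$ of the operators $L_l$ used to integrate by parts along the leaves. Your alternative has a gap I do not see how to close: the product chart $(a,c,\tau)$ that simultaneously straightens the unstable and weak-stable foliations is, as you yourself note in (i), only a bi-H\"older homeomorphism. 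The wavefront set does not transform under H\"older changes of variables (the pullback of a distribution by such a map is not even defined), so the identifications $E_s^*=\mathrm{span}\{da_i\}$, $E_u^*=\mathrm{span}\{dc_j\}$ and the reading of $\mathrm{WF}(\theta)\subset\{\xi_b=0\}$ ``in these coordinates'' are not available. Since $E_s^*$ and $E_u^*$ are only H\"older continuous, there is no smooth chart in which they are constant subspaces, so the standard partial-smoothness fact you invoke in (ii) --- and hence the disintegration $\theta=\int\theta_b\,db$ with $b\mapsto\theta_b$ smooth into measures --- has no smooth coordinate system in which to operate. This is exactly the difficulty the paper's proof is organised around: each individual leaf $\mathcal W^u(x)$ is smooth, so oscillatory-integral estimates can be run on it, and the only H\"older-quality input is the dependence of the leaf on $x$, which is where $\alpha$ enters.

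A secondary problem: even granting a disintegration, the bound $\|\theta_{\beta(x)}-\theta_{\beta(x')}\|_{\mathrm{TV}}\le C\,|\beta(x)-\beta(x')|$ is stronger than what transverse microlocal smoothness provides; a $C^\infty$ map into $\mathcal D'$ whose values happen to be measures need not be Lipschitz (or even continuous) in total variation, and $\partial_b\theta_b$ need not be a measure of controlled mass. What is actually needed is a bound on $|(\theta_{\beta(x)}-\theta_{\beta(x')})(\varphi)|$ uniform over base points with a constant depending on finitely many leafwise derivatives of $\varphi$ --- which is essentially the statement being proved, so as written this step is close to circular. To avoid the Fourier route you would have to replace (ii) by a direct argument on the smooth leaves, defining $\theta_x$ via the pairing with $\delta_{\mathcal W^u(x)}$ as in Proposition \ref{hard} and proving H\"older continuity of $x\mapsto\theta_x$ by hand; but that again requires controlling $\delta_{\mathcal W^u(x)}-\delta_{\mathcal W^u(x')}$ microlocally, i.e.\ the paper's argument.
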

\begin{proof}
The proof is mostly contained in the proof of \cite[Proposition 6]{Wei}. Indeed, to get \eqref{eq:appendix}, one needs to prove superpolynomial decay of the Fourier transform of $\delta_{\mathcal W^u(x)}-\delta_{\mathcal W^u(x')}$ for $\xi\notin \Omega$ where a small conic neighborhood of $E_0^*\oplus E_u^*$ with a constant that is Hölder continuous. In other words,
$$\forall \xi \notin \Omega, \ |\widehat{\delta_{\mathcal W^u(x)}}(\xi)-\widehat{\delta_{\mathcal W^u(x')}}(\xi)|\leq C_Nd(x,x')^{\alpha}\langle \xi \rangle ^{-N}$$
where the Fourier transforms should be, strictly speaking, defined locally using charts. The super polynomial decay is proved in \cite[Proposition 6]{Wei} using integration by parts. Inspection of the proof shows that all $C^k$ norms of the operator $L_l$ which is used to integrate by parts, depend Hölder continuously on the base point $x$. Weich only uses continuity in the proof to get uniform constant $C_N$ but using the Hölder continuity instead to bound the difference yields the refinement above.

On $\Omega$, we use the fact that the Fourier transform of $\theta$ decays super polynomially and that $|\widehat{\delta_{\mathcal W^u(x)}}(\xi)-\widehat{\delta_{\mathcal W^u(x')}}(\xi)|$ has order zero. Actually, using the Hölder continuity of the foliation, the above difference is again controled by $d(x,x')^{\alpha}$ and using the definition of the distributional product yields \eqref{eq:appendix}.
\end{proof}

\end{document}